\newcommand{\C}{\mathbb{C}}
\newcommand{\Z}{\mathbb{Z}}
\newcommand{\G}{\mathcal{G}}
\newcommand{\E}{\mathcal{E}}
\newcommand{\Odd}{\mathcal{O}}
\newcommand{\Mod}[1]{\ (\text{mod}\ #1)}
\newtheorem{theorem}{Theorem}
\newtheorem{conjecture}{Conjecture}
\newtheorem{corollary}{Corollary}
\newtheorem{definition}{Definition}
\newtheorem{example}{Example}
\newtheorem{proposition}{Proposition}
\numberwithin{equation}{section}
\begin{document}

\title{\textbf{Schur ring over group $\Z_{2}^{n}$, circulant $S$-sets invariant by decimation and Hadamard matrices}}
\author{Ronald Orozco L\'opez}

\newcommand{\Addresses}{{
  \bigskip
  \footnotesize

  \textsc{Department of Mathematics, Universidad de los Andes,
    Bogot\'a Colombia,}\par\nopagebreak
  \textit{E-mail address}, \texttt{rj.orozco@uniandes.edu.co}

}}

\maketitle

\begin{abstract}
In this paper a variety of issues are discussed, Schur ring, $S$-sets, circulant orbits, decimation
operator and Hadamard matrices and their relation between them is shown. Firstly we define the
complete $S$-sets. Next, we study the structure of Schur ring with circulant basic sets 
over $\Z_{2}^{n}$ and we define the free and non-free circulant $S$-sets, the symmetric,
non-symmetric and antisymmetric circulant $S$-sets. We prove that all this 
$S$-sets are invariants under decimation. Finally, we prove that if a Hadamard matrix exist then 
this is contained in a complete $S$-set. Also, we prove that can't exist circulant and
with one core Hadamard matrices with some particular structure. These theorems include a result
known on symmetric circulant Hadamard matrices of order $4n$ only when $n$ is an odd number.
\end{abstract}
{\bf Keywords:} Schur ring, circulant basic sets, decimation, autocorrelation, Hadamard matrices\\
{\bf Mathematics Subject Classification:} 05E15,05E18,20B30,05B20

\section{Introduction}

Let $G$ be a finite group with identity element $e$ and $\C[G]$ the group algebra of all formal sums 
$\sum_{g\in G}a_{g}g$, $a_{g}\in \C$, $g\in G$. For $T\subset G$, the element $\sum_{g\in T}g$ will 
be denoted by $\overline{T}$. Such an element is also called a $\textit{simple quantity}$. 
The transpose of $\overline{T} = \sum_{g\in G}a_{g}g$ is defined as $\overline{T}^{\top} = \sum_{g\in G}a_{g}(g^{-1})$. Let $\{T_{0},T_{1},...,T_{r}\}$ be a partition of $G$ and let $S$ be the subspace
of $\C[G]$ spanned by $\overline{T_{1}},\overline{T_{2}},...,\overline{T_{r}}$.  We say that $S$ is 
a $\textit{Schur ring}$ ($S$-ring, for short) over $G$ if: 

\begin{enumerate}
\item $T_{0} = \lbrace e\rbrace$, 
\item for each $i$, there is a $j$ such that $\overline{T_{i}}^{\top} = \overline{T_{j}}$,
\item for each $i$ and $j$, we have $\overline{T_{i}}\ \overline{T_{j}} = \sum_{k=1}^{r}\lambda_{i,j,k}\overline{T_{k}}$, for constants $\lambda_{i,j,k}\in\C$.
\end{enumerate}

The numbers $\lambda_{i,j,k}$ are the structure constants of $S$ with respect to the linear base 
$\{\overline{T_{0}},\overline{T_{1}},...,\overline{T_{r}}\}$. The sets $T_{i}$ are called the
\textit{basic sets} of the $S$-ring $S$. Any union of them is called an $S$-sets. Thus, 
$X\subseteq G$ is an $S$-set if and only if $\overline{X}\in S$. The set of all $S$-set is closed
with respect to taking inverse and product. Any subgroup of $G$ that is an $S$-set, is called an 
$S$-\textit{subgroup} of $G$ or $S$-\textit{group} (For details, see [1],[2],[3]). A partition
$\{T_{0},...,T_{r}\}$ of $G$ is called \textit{Schur partition} or $S$-\textit{partition} 
if the $T_{i}$ fulfill $T_{0}=\{e\}$ and $T_{i}^{-1}=\{g^{-1}:g\in T_{i}\}=T_{j}$ for each $i$ and 
for each $j$. It is known that there is a 1-1 correspondence between $S$-ring over $G$ and 
$S$-partition of $G$. By using this correspondence, in this paper we will refer to an $S$-ring by
mean of its $S$-partition.

On the other hand, a Hadamard matrix $H$ is an $n$ by $n$ matrix all of whose entries are $+1$ or 
$-1$ which satisfies $HH^{t}=nI_{n}$, where $H^{t}$ is the transpose of $H$
and $I_{n}$ is the unit matrix of order $n$. It is also known that, if an
Hadamard matrix of order $n>1$ exists, $n$ must have the value $2$ or be divisible by 4. 

There are several conjectures associated with Hadamard matrices. 
The main conjecture concerns its existence. This states that a Hadamard matrix exists for
all multiple order of 4. Another very important conjecture states that no circulant Hadamard matrix
exists if the order is different from 4. Another conjecture is known as the Structured Hadamard
Conjecture [5]. In an effort to prove all these problems people have constructed different
Hadamard matrices types: Paley type[4], with one and two circulant core [9],[10], Williamson type
[7],Goethals-Seidel type and other [6]. 

All these constructions are defined with circulant matrices. 
Therefore, a general strategy is to define an $S$-partition over $G=\Z_{2}^{n}$, where the basic sets
are circulants induced by the cyclic group $C_{n}=\left\langle C\right\rangle\leq Aut(\Z_{2}^{n})$ 
of order $n$ and $Aut(\Z_{2}^{n})$ is the group of automorphisms of $\Z_{2}^{n}$. We denote to the
$S$-partition of $\Z_{2}^{n}$ induced by a group $H\leq Aut(\Z_{2}^{n})$ by 
$\mathfrak{S}(\Z_{2}^{n},H)$. Then $\mathfrak{S}(\Z_{2}^{n},C_{n})$ will denote the $S$-partition
induced by $C_{n}$.

In this paper several Schur rings induced by permutation automorphic subgroups of 
$Aut(\Z_{2}^{n})$ are studied. Also several $S$-sets are considered, namely complete $S$-sets, 
free and non-free circulant $S$-sets, circulant $S$-sets invariant
by decimation, symmetric, non-symmetric and antisymmetric circulant $S$-sets. 
We prove that all this $S$-sets are invariants under decimation. Finally, we prove that if a 
Hadamard matrix exist, then this is contained in a complete $S$-set. Also, we prove that if 
a circulant and with one core Hadamard matrix of order $nr$ exists, then these can't be partitioned
in a basic set.

\section{Schur ring $\mathfrak{S}(\Z_{2}^{n},S_{n})$}

In this paper denote by $\Z_{2}$ the cyclic group of order 2 with elements $+$
and $-$(where + and $-$ mean 1 and $-1$ respectively). Let 
$\Z_{2}^{n}=\overset{n}{\overbrace{\Z_{2}\times \cdots \times \Z_{2}}}$. Then all $X\in\Z_{2}^{n}$ are sequences of $+$ and $-$ and will be called $\Z_{2}-$\textit{sequences}. 

Let $\omega(X)$ denote the Hamming weight of $X\in\Z_{2}^{n}$. Thus, $\omega(X)$ is the number of $+$ in any $\Z_{2}-$sequences $X$ of $\Z_{2}^{n}$. Now let $\G_{n}(k)$ be the subset of $\Z_{2}^{n}$ such that $\omega(X)=k$ for all $X\in\G_{n}(k)$, where $0\leq k\leq n$.

\begin{proposition}
Let $\G_{n}(k)\subset\Z_{2}^{n}$. Then
\begin{equation}\label{eq4}
\G_{n}(k)=\{+\} \times \G_{n-1}(k-1)\cup \{-\} \times \G_{n-1}(k)
\end{equation}
and
\begin{equation}\label{eq5}
\left\vert \G_{n}(k)\right\vert =\binom{n}{k},
\end{equation}
where $\binom{n}{k}$ are the binomial coefficients.
\end{proposition}
\begin{proof}
By induction. When $k$ is $0$ or $n$ we have
\begin{eqnarray}\label{eq6}
\G_{n}(0) &=&\{-\} \times \G_{n-1}(0), \\
\G_{n}(n) &=&\{+\} \times \G_{n-1}(n-1).
\end{eqnarray}
Suppose $1\leq k\leq n-1$. Since
\begin{eqnarray*}
\omega(\{+\} \times \G_{n-1}(k-1)) &=&\omega(\{-\} \times 
\G_{n-1}(k))=k
\end{eqnarray*}
and
\begin{equation}\label{binomial}
\binom{n-1}{k-1}+\binom{n-1}{k}=\binom{n}{k}
\end{equation}
then $\{+\} \times \G_{n-1}(k-1)\cup \{-\} \times \G_{n-1}(k)$ 
contains all $X$ of\ $\Z_{2}^{n}$ such that $\omega(X) =k$. Hence
\begin{equation}\label{eq7}
\G_{n}(k)=\{+\} \times \G_{n-1}(k-1)\cup \{-\} \times \G_{n-1}(k).
\end{equation}
and $\left\vert \G_{n}(k)\right\vert$ follows from (\ref{binomial}).
\end{proof}

We let $T_{i}=\G_{n}(n-i)$. It is straightforward to prove that the partition 
$\mathfrak{S}(\Z_{2}^{n},S_{n})=\{\G_{n}(0),...,\G_{n}(n)\}$ induces an $S$-partition over 
$\Z_{2}^{n}$, where $S_{n}\leq Aut(\Z_{2}^{n})$ is the permutation group on $n$ objects. From [3] 
it is know that the constant structure $\lambda_{i,j,k}$ is equal to
\begin{equation}\label{estruc_cons}
\lambda_{i,j,k}=
\begin{cases}
0&\mbox{if } i+j-k\ \mbox{is an odd number}\\
\binom{k}{(j-i+k)/2}\binom{n-k}{(j+i-k)/2} &\mbox{if } i+j-k\ \mbox{is an even number}
\end{cases}
\end{equation}

From (\ref{estruc_cons}) is follows that
\begingroup\makeatletter\def\f@size{10}\check@mathfonts
\begin{equation}\label{producto}
\G_{n}(a)\G_{n}(b)=
\begin{cases}
\bigcup\limits_{i=0}^{a}\G_{n}(n-a-b+2i), & 0\leq a\leq \left[\dfrac{n}{2}\right], a\leq b\leq n-a,\\
\bigcup\limits_{i=0}^{n-a}\G_{n}(a+b-n+2i), & \left[\dfrac{n}{2}\right]+1\leq a\leq n, n-a\leq b\leq a.  
\end{cases}
\end{equation}
\endgroup
A proof by induction of (\ref{producto}) is presented in the last section of this paper.

It follows directly from (\ref{estruc_cons}) that $\lambda_{i,j,2k+1}=0$ if $i+j$ is even and 
$\lambda_{i,j,2k}=0$ if $i+j$ is odd. The union of all basic sets $\G_{n}(2a)$ in $S$ will be denoted 
by $\E_{n}$ and the union of all basic sets $\G_{n}(2a+1)$ in $S$ will be denoted $\Odd_{n}$. 
The sets $\E_{2n}$ and $\Odd_{2n+1}$ are subgroups of order $2^{2n-1}$ and $2^{2n}$, respectively.  
Then $$\mathfrak{S}(\E_{2n},S_{n})=\{\G_{2n}(0),\G_{2n}(2),...,\G_{2n}(2n)\}$$ and 
$$\mathfrak{S}(\Odd_{2n+1},S_{n})=\{\G_{2n+1}(1),\G_{2n+1}(3),...,\G_{2n+1}(2n+1)\}$$
are $S$-subgroups of $\Z_{2}^{2n}$ and $\Z_{2}^{2n+1}$, respectively.

\begin{theorem}
Let $\E_{2n},\Odd_{2n+1}$ subgroups of $\Z_{2}^{n}$. Then
\begin{enumerate}
\item $\E_{2n}=\G_{2n}(n)^{2}$.
\item $\Odd_{2n+1} = \G_{2n+1}(n)^{2}$.
\end{enumerate}
\end{theorem}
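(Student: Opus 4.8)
The plan is to obtain both identities as direct specializations of the product formula (\ref{producto}), which already records exactly which basic sets $\G_N(c)$ occur in a product $\G_N(a)\G_N(b)$. Since $\E_{2n}$ is by definition the union of the even-weight basic sets $\G_{2n}(0),\G_{2n}(2),\dots,\G_{2n}(2n)$, and $\Odd_{2n+1}$ the union of the odd-weight basic sets $\G_{2n+1}(1),\G_{2n+1}(3),\dots,\G_{2n+1}(2n+1)$, it suffices to show that squaring the weight-$n$ basic set reproduces precisely these unions. To avoid a clash of names I would write $N$ for the ambient length appearing in (\ref{producto}).

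For the first statement I would work in $\Z_{2}^{2n}$, so $N=2n$, and take $a=b=n$. The point to check is that these parameters fall under the first branch of (\ref{producto}): indeed $[N/2]=n$, so the requirement $0\le a\le [N/2]$ holds with equality, and the requirement $a\le b\le N-a$ collapses to $n\le n\le n$, which forces $b=n$ and is met. Substituting $N-a-b=2n-n-n=0$ into the first branch then gives
\begin{equation*}
\G_{2n}(n)^{2}=\bigcup_{i=0}^{n}\G_{2n}(2i),
\end{equation*}
which is exactly $\E_{2n}$.

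The second statement is handled identically in $\Z_{2}^{2n+1}$, with $N=2n+1$ and again $a=b=n$. Here $[N/2]=n$, so $a=[N/2]$, and the range $a\le b\le N-a$ becomes $n\le n\le n+1$, so $b=n$ is admissible; the first branch of (\ref{producto}) applies with $N-a-b=2n+1-2n=1$, yielding
\begin{equation*}
\G_{2n+1}(n)^{2}=\bigcup_{i=0}^{n}\G_{2n+1}(2i+1)=\Odd_{2n+1}.
\end{equation*}

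The only delicate point is that in both cases the weight $a=n$ sits exactly on the boundary $[N/2]$ separating the two branches of the piecewise formula, so I would be careful to confirm that the first branch is the correct one to invoke and that its index set $0\le i\le a=n$ produces every admissible weight exactly once. As a cross-check independent of (\ref{producto}), I would note the elementary identity $\omega(X\cdot Y)=N-\omega(X)-\omega(Y)+2\lvert A\cap B\rvert$, where $A,B$ are the supports of the $+$ entries of $X,Y$; with $\omega(X)=\omega(Y)=n$ this reads $\omega(X\cdot Y)=N-2n+2\lvert A\cap B\rvert$, and letting $\lvert A\cap B\rvert$ run over its full range $0,\dots,n$ recovers precisely the even weights when $N=2n$ and the odd weights when $N=2n+1$, confirming both unions.
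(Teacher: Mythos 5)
Your proposal is correct and takes essentially the same route as the paper: the paper's own proof is the one-line remark that both statements follow from (\ref{producto}), and you have simply carried out that specialization in full detail (ambient length $N=2n$ or $N=2n+1$, $a=b=n$, first branch, with the boundary case $a=[N/2]$ checked). Your additional support-counting cross-check via $\omega(XY)=N-\omega(X)-\omega(Y)+2\lvert A\cap B\rvert$ is also sound, merely more explicit than anything the paper provides.
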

\begin{proof}
The statements are follow of (\ref{producto}).
\end{proof}

\section{Complete maximal $S$-set}

In this section a result on $S$-sets of $\mathfrak{S}(\Z_{2}^{n},S_{n})$ will be studied. 
In particular, will be defined the complete maximal $S$-sets that will be used in a future section
to show that all Hadamard matrix is contained in such $S$-sets. Let $\mathfrak{S}^{\prime}$ denote
a set of basic sets of $\mathfrak{S}(\Z_{2}^{n},S_{n})$. By the 1-1 correspondence between 
$S$-partitions and $S$-rings we can identify $\mathfrak{S}^{\prime}$ with $S$-sets in 
$\mathfrak{S}(\Z_{2}^{n},S_{n})$.

\begin{definition}
Take $\G_{n}(a)$ in $\mathfrak{S}(\Z_{2}^{n},S_{n})$. Let $\mathfrak{S}^{\prime}\subset\mathfrak{S}(\Z_{2}^{n},S_{n})$ a set of basic sets. We will call $\mathfrak{S}^{\prime}$ a 
$\G_{n}(a)$-\textbf{complete} $S$-set if it is hold
\begin{enumerate}
\item $\G_{n}(i)\G_{n}(j)\supset\G_{n}(a)$ for all $\G_{n}(i),\G_{n}(j)\in\mathfrak{S}^{\prime}$,
\item There is no $\G_{n}(b)\in\mathfrak{S}(\Z_{2}^{n},S_{n})$ such that 
$\G_{n}(b)^{2}\supset\G_{n}(a)$ and $\G_{n}(b)\G_{n}(k)\supset\G_{n}(a)$ for all 
$\G_{n}(k)\in\mathfrak{S}^{\prime}$.
\end{enumerate}
\end{definition}

In the following theorem we will show that there is no $\G_{n}(a)$-complete for all $n$ and all $a$.

\begin{theorem}\label{the_non_exis_S_comp}
\begin{enumerate}
\item There is no $\G_{2n}(2a+1)$-complete $S$-sets in $\mathfrak{S}(\Z_{2}^{2n},S_{n})$.
\item There is no $\G_{2n+1}(2a)$-complete $S$-sets in $\mathfrak{S}(\Z_{2}^{2n+1},S_{n})$.
\end{enumerate}
\end{theorem}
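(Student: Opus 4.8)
The plan is to read a single parity invariant off the product formula (\ref{producto}) and show that it kills the defining condition of completeness. Write $N$ for the ambient dimension, so part (1) concerns $\Z_{2}^{N}$ with $N=2n$ and target weight $c=2a+1$, while part (2) concerns $N=2n+1$ and target weight $c=2a$; the common feature is $c\not\equiv N\pmod{2}$. From (\ref{producto}) every basic set $\G_{N}(c')$ occurring in a product $\G_{N}(i)\G_{N}(j)$ has $c'$ of the form $N-i-j+2t$ (or $i+j-N+2t$), hence $c'\equiv N+i+j\pmod{2}$; the same fact is visible in (\ref{estruc_cons}), where $\lambda_{i,j,k}=0$ whenever $i+j-k$ is odd. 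I would first isolate this as a one-line observation, and then specialize it to the diagonal $j=i$: every basic set appearing in a square $\G_{N}(i)^{2}$ has weight $\equiv N\pmod{2}$.

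Next I would feed the diagonal into clause (1) of the definition. Assume toward a contradiction that a nonempty $\G_{N}(c)$-complete $S$-set $\mathfrak{S}'$ exists and choose any $\G_{N}(i)\in\mathfrak{S}'$. Taking the pair $\G_{N}(i),\G_{N}(i)$ in clause (1) forces $\G_{N}(i)^{2}\supset\G_{N}(c)$, so $c$ occurs as a weight in $\G_{N}(i)^{2}$ and therefore $c\equiv N\pmod{2}$. This contradicts $c\not\equiv N\pmod{2}$, which holds in both parts ($N$ even with $c$ odd in (1); $N$ odd with $c$ even in (2)). Hence no basic set can lie in $\mathfrak{S}'$, and both statements follow at once, the only case distinction being the bookkeeping of which of $N,c$ is even.

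To be rigorous about the maximality clause (2) and to exclude the degenerate empty family, I would note that the identical parity count applied to the extending square $\G_{N}(b)^{2}$ shows that $\G_{N}(b)^{2}\supset\G_{N}(c)$ is impossible for every $b$: that square carries only weights $\equiv N\pmod{2}$, again the opposite parity to $c$. Thus no $\G_{N}(b)$ can serve as the required element in clause (2). Consequently clauses (1) and (2) are jointly satisfiable only vacuously, so there is no genuine (nonempty) $\G_{N}(c)$-complete $S$-set, as claimed.

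The computation here is immediate, so the real care is structural rather than arithmetic: one must notice that completeness is tested through the \emph{diagonal} products $\G_{N}(i)^{2}$ in clause (1) and through the square $\G_{N}(b)^{2}$ in clause (2), and that these are precisely the products whose weights are pinned to the parity of $N$. The one point to handle cleanly is the empty family, so that the nonexistence is not rendered trivially true; since clause (2) is already unsatisfiable for the extending element under the stated parity, the argument is uniform and both parts close together.
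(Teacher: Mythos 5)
Your proposal is correct and takes essentially the same route as the paper: the paper's one-line proof is exactly your diagonal parity observation, namely that $\G_{2n}(b)^{2}\supset\G_{2n}(2a+1)$ would force $\vert 2n-2b\vert+2i=2a+1$ via (\ref{producto}), which is impossible, and symmetrically for $\G_{2n+1}(2a)$. Your extra care about the degenerate empty family (which satisfies both clauses vacuously, so the nonexistence claim concerns nonempty families) is a point the paper's terse proof silently elides, but the core parity-of-squares argument is identical.
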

\begin{proof}
From $\G_{2n}(b)^{2}\supset\G_{n}(2a+1)$ it is followed that $\vert 2n-2b\vert+2i=2a+1$, but this
is not possible. Equally for the other statement.
\end{proof}

\begin{definition}
Let $\E_{n}(a)$ denote the $\G_{n}(a)$-complete $S$-set with basic sets having Hamming weight an
even number and let $\Odd_{n}(a)$ denote the $\G_{n}(a)$-complete $S$-set with basic sets having
Hamming weight an odd number. We define the order of $\E_{n}(a)$ or $\Odd_{n}(a)$ as the number 
of basic sets contained in them.
\end{definition}

The following theorems tell us how many $\G_{n}(a)$-complete $S$-sets exists in $\Z_{2}^{n}$.

\begin{theorem}
There exists exactly $n+1$ $\G_{n}(n)$-complete $S$-sets in $\Z_{2}^{n}$.
\end{theorem}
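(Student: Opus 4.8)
The plan is to reduce the whole question to a single, easily checked criterion: for which pairs $(i,j)$ does the product $\G_{n}(i)\G_{n}(j)$ contain the basic set $\G_{n}(n)$? First I would observe that $\G_{n}(n)$ consists of the unique sequence of Hamming weight $n$, namely the all-$+$ string, which is precisely the identity $e$ of $\Z_{2}^{n}$; thus $\G_{n}(n)=\{e\}$. Since $\Z_{2}^{n}$ is an elementary abelian $2$-group, every element is its own inverse, so $e\in\G_{n}(i)\G_{n}(j)$ holds if and only if there is an $X$ with $X\in\G_{n}(i)$ and $X^{-1}=X\in\G_{n}(j)$, i.e. if and only if $\G_{n}(i)\cap\G_{n}(j)\neq\emptyset$. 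Because the sets $\G_{n}(k)$ partition $\Z_{2}^{n}$, this occurs precisely when $i=j$. (Alternatively this can be read off from \eqref{producto}: the weight $n$ appears in the union defining $\G_{n}(i)\G_{n}(j)$ only at the extreme index of summation, and the range constraints there force $i=j$.)

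With this criterion in hand, condition (1) in the definition of a $\G_{n}(n)$-complete $S$-set requires that every pair $\G_{n}(i),\G_{n}(j)$ drawn from $\mathfrak{S}^{\prime}$ satisfy $\G_{n}(i)\G_{n}(j)\supset\G_{n}(n)$, hence $i=j$. Applying this to two members of $\mathfrak{S}^{\prime}$ shows that $\mathfrak{S}^{\prime}$ cannot contain two distinct basic sets, so every admissible $\mathfrak{S}^{\prime}$ is a singleton $\{\G_{n}(i)\}$ with $0\le i\le n$. Conversely I would verify that each such singleton really is complete. Condition (1) holds because $\G_{n}(i)^{2}\supset\G_{n}(n)$ is always true, as $i=i$ trivially satisfies the criterion. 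For the maximality condition (2), suppose one tries to adjoin a new basic set $\G_{n}(b)$ with $b\neq i$: the two requirements it would have to meet are $\G_{n}(b)^{2}\supset\G_{n}(n)$, which always holds, and $\G_{n}(b)\G_{n}(i)\supset\G_{n}(n)$, which by the criterion forces $b=i$, contradicting $b\neq i$. Hence no proper extension is possible and the singleton is maximal.

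Finally I would count: the $\G_{n}(n)$-complete $S$-sets are exactly the singletons $\{\G_{n}(0)\},\{\G_{n}(1)\},\dots,\{\G_{n}(n)\}$, of which there are $n+1$, giving the claim. I expect the only delicate point to be the correct reading of the maximality clause (2): it must be understood as forbidding the addition of a basic set \emph{not already present} in $\mathfrak{S}^{\prime}$, since otherwise $\G_{n}(i)$ itself would satisfy the stated inclusions and no singleton could ever be complete. Making this interpretation explicit, and separately dismissing the empty collection (which fails (2), because any $\G_{n}(b)$ may be added to it), is the part of the argument that needs care; the algebra itself is immediate from the self-inverse property of $\Z_{2}^{n}$.
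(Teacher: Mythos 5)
Your proposal is correct, and its skeleton matches what the paper's one-line proof intends: the $\G_{n}(n)$-complete $S$-sets are exactly the $n+1$ singletons $\{\G_{n}(a)\}$, $0\le a\le n$. But your justification of the key criterion is genuinely different and does more work than the paper records. The paper only asserts $\G_{n}(a)^{2}\supset\G_{n}(n)$ for all $a$, which by itself gives just half the argument (each singleton satisfies condition (1)); the converse half --- that $\G_{n}(i)\G_{n}(j)\supset\G_{n}(n)$ forces $i=j$, which is what caps the count at $n+1$ and rules out larger collections --- is left entirely to the word ``trivially,'' presumably to be read off from the extreme index in (\ref{producto}). You instead derive the full criterion from the observation that $\G_{n}(n)=\{e\}$ and that every element of $\Z_{2}^{n}$ is an involution, so $e\in\G_{n}(i)\G_{n}(j)$ iff $\G_{n}(i)\cap\G_{n}(j)\neq\emptyset$ iff $i=j$ by the partition property; this is more elementary (no appeal to the structure constants or to (\ref{producto}), which you cite only as a cross-check) and makes the ``exactly'' in the statement airtight. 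You are also right that clause (2) of the definition must be read as forbidding adjunction of a basic set \emph{not already in} $\mathfrak{S}^{\prime}$ --- under the literal reading, $\G_{n}(b)=\G_{n}(i)$ itself would violate (2) and no singleton could be complete, contradicting the theorem --- and your explicit dismissal of the empty collection closes a case the paper never mentions. In short: same conclusion and same underlying counting, but your identity-element argument supplies the unstated converse and the edge cases, at the cost of length; the paper's route, granting (\ref{producto}), gets the forward inclusions instantly from the top term of the union.
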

\begin{proof}
Trivially follows of $\G_{n}(a)^{2}\supset\G_{n}(n)$ for $0\leq a\leq n$.
\end{proof}

\begin{theorem}
There exist exactly one $\G_{2n}(0)$-complete $S$-set in $\Z_{2}^{2n}$.
\end{theorem}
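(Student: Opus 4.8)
The plan is to reduce everything to a description of which products $\G_{2n}(i)\G_{2n}(j)$ contain the single element of $\G_{2n}(0)$. Recall that $\G_{2n}(0)=\{\mathbf{w}\}$, where $\mathbf{w}=(-,-,\dots,-)$ is the unique $\Z_{2}$-sequence of Hamming weight $0$. First I would establish the compatibility relation
\[
\G_{2n}(i)\G_{2n}(j)\supset\G_{2n}(0)\iff i+j=2n .
\]
This follows at once from the product formula (\ref{producto}): among the indices $2n-i-j+2\ell$ the weight $0$ occurs precisely when $i+j=2n$. (Combinatorially, $XY=\mathbf{w}$ with $X\in\G_{2n}(i)$ and $Y\in\G_{2n}(j)$ forces $X$ and $Y$ to disagree in every coordinate, so $Y$ is the coordinatewise complement of $X$ and hence $j=2n-i$.) In particular, taking $i=j$ yields $\G_{2n}(b)^{2}\supset\G_{2n}(0)$ if and only if $b=n$.

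Next I would feed this into the two defining conditions of a $\G_{2n}(0)$-complete $S$-set. Applying condition~$1$ to the diagonal pair $i=j$ shows that every basic set $\G_{2n}(i)\in\mathfrak{S}^{\prime}$ must satisfy $\G_{2n}(i)^{2}\supset\G_{2n}(0)$, which by the previous step forces $i=n$. Thus any $\G_{2n}(0)$-complete $S$-set is contained in $\{\G_{2n}(n)\}$. To finish I would verify that $\mathfrak{S}^{\prime}=\{\G_{2n}(n)\}$ is indeed complete: condition~$1$ holds because $n+n=2n$, and condition~$2$ holds because $b=n$ is the only index with $\G_{2n}(b)^{2}\supset\G_{2n}(0)$, so no further basic set can be adjoined. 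This yields both existence and uniqueness, i.e.\ exactly one such $S$-set.

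I expect the only delicate point to be the correct use of condition~$1$: it must be invoked on the self-products $i=j$, and it is precisely this self-product requirement that collapses every candidate down to the single basic set $\G_{2n}(n)$. Once the equivalence $\G_{2n}(i)\G_{2n}(j)\supset\G_{2n}(0)\iff i+j=2n$ is in hand, the remaining verification is immediate, so no genuine computational obstacle remains.
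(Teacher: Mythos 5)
Your proposal is correct and follows essentially the same route as the paper: the paper likewise uses the product formula (\ref{producto}) to see that $\G_{2n}(a)^{2}=\bigcup_{j=0}^{a}\G_{2n}(2n-2a+2j)$ contains $\G_{2n}(0)$ only when $a=n$, and concludes that $\{\G_{2n}(n)\}$ is the unique $\G_{2n}(0)$-complete $S$-set. Your additions --- the combinatorial complement argument for the equivalence $i+j=2n$ and the explicit check of conditions 1 and 2 for $\{\G_{2n}(n)\}$ --- merely make explicit what the paper leaves implicit.
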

\begin{proof}
By (\ref{producto}) part I we have for $0\leq a\leq n$ that
\begin{equation*}
\G_{2n}(a)\G_{2n}(a)=\bigcup\limits_{j=0}^{a}\G_{2n}(2n-2a+2j)\supset\G_{2n}(0).
\end{equation*}
Then $2n-2a+2j=0$ implies that $a\geq n$. So $a=n$ and $\{\G_{2n}(n)\}$ is the only
$\G_{2n}(0)$-complete in $\Z_{2}^{2n}$.
\end{proof}

From theorem \ref{the_non_exis_S_comp} there is no $\G_{n}(n-1)$-complete $S$-set. Then, after 
excluding the previous trivial cases, we have

\begin{theorem}
There exists exactly two $\G_{n}(a)$-complete $S$-sets in $\Z_{2}^{n}$ for $1\leq a\leq n-2$, 
$(n-a)\equiv0\mod2$, of order $\frac{a}{2}$ and order $\frac{a}{2}+1$ only if $a$ is an even 
number and both with order $\frac{a+1}{2}$ only if $a$ is an odd number.
\end{theorem}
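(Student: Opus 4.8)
The plan is to convert the two defining conditions of a $\G_{n}(a)$-complete $S$-set into purely arithmetic constraints on Hamming weights, and then do combinatorics on an index set. Writing a candidate as $\mathfrak{S}^{\prime}=\{\G_{n}(i):i\in J\}$ with $J\subseteq\{0,\dots,n\}$, a direct reading of (\ref{producto}) (equivalently of the structure constants (\ref{estruc_cons})) shows that $\G_{n}(a)\subseteq\G_{n}(i)\G_{n}(j)$ holds exactly when three conditions are met: a parity condition $i+j\equiv n-a\Mod2$, a sum condition $n-a\le i+j\le n+a$, and a difference condition $|i-j|\le n-a$. First I would record these three inequalities as a preliminary lemma; everything afterwards is bookkeeping on $J$.

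Next I would use the hypothesis $n-a\equiv0\Mod2$. Applying the parity condition to an arbitrary pair, and in particular to $i=j$, forces $i+j$ to be even for all $i,j\in J$, so every index in $J$ has one fixed parity. This already yields the dichotomy in the statement: a complete set is assembled either from even weights, giving $\E_{n}(a)$, or from odd weights, giving $\Odd_{n}(a)$, and the two cannot mix, which is why at most two complete sets can arise. Taking $i=j$ in the sum and difference conditions (this is precisely the requirement $\G_{n}(b)^{2}\supset\G_{n}(a)$ occurring in part (2) of the definition) confines every admissible index to the interval $I=[(n-a)/2,(n+a)/2]$, a block of $a+1$ consecutive integers.

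The heart of the argument is to show that, within each parity, the unique maximal admissible index set is the full collection of indices of that parity lying in $I$. Once both indices lie in $I$ the sum condition is automatic, so the only genuine restriction between two such indices is $|i-j|\le n-a$; since the extreme difference across $I$ equals $a$, the whole parity class of $I$ forms a single admissible set as soon as this diameter is dominated by $n-a$, and maximality (condition (2)) then follows because no index outside $I$ survives the self-product test. Counting the two parity classes inside the $a+1$ integers of $I$ gives the orders: for $a$ odd the endpoints of $I$ have opposite parity, so each class has $(a+1)/2$ elements, while for $a$ even the endpoints agree in parity, giving one class of size $a/2+1$ and one of size $a/2$. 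I would also invoke the complement automorphism $X\mapsto -X$ of $\Z_{2}^{n}$, which sends $\G_{n}(k)$ to $\G_{n}(n-k)$ and satisfies $\G_{n}(n-i)\G_{n}(n-j)=\G_{n}(i)\G_{n}(j)$: the involution $i\mapsto n-i$ then permutes complete sets, interchanging $\E_{n}(a)$ and $\Odd_{n}(a)$ when $a$ is odd (explaining the equal orders) and fixing each of them when $a$ is even.

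The main obstacle is the difference condition. As long as the diameter $a$ of $I$ is controlled by $n-a$, every pair of same-parity indices in $I$ is admissible, the maximal set in each parity is forced and unique, and one gets exactly two complete sets with the asserted orders. The delicate point is to verify that this constraint does not fragment a parity class of $I$ into several incomparable maximal sets; this is where the interplay between $a$ and $n-a$ must be handled with care, and it is the step I expect to demand the most attention, with the excluded trivial values $a\in\{0,n-1,n\}$ and the parity hypothesis $n\equiv a\Mod2$ doing the real work. Once uniqueness of the maximal set per parity is secured, the count of exactly two and both order formulas are immediate from the parity split of $I$.
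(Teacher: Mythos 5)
Your reduction of completeness to three arithmetic conditions (parity $i+j\equiv n-a\pmod 2$, sum $n-a\le i+j\le n+a$, difference $|i-j|\le n-a$) is correct, and it is actually \emph{sharper} than the paper's own proof: the paper derives the parity constraint and the confinement $b,c\in\left[\frac{n-a}{2},\frac{n+a}{2}\right]$ from self-products exactly as you do, but then immediately counts the two parity classes of that interval, retaining only the range-of-validity condition $b\le c\le n-b$ of (\ref{producto}) and never imposing the requirement that the top weight $n-|b-c|$ of the product reach $a$. However, your proposal stops precisely where a proof would have to be delivered: you flag the verification that the difference condition ``does not fragment a parity class'' as the delicate step and leave it open. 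That is a genuine gap, and it cannot be closed in the stated generality, because fragmentation really occurs whenever $2a>n$ (a case permitted by $1\le a\le n-2$, $n\equiv a\pmod 2$). Concretely, take $n=6$, $a=4$: the interval is $I=[1,5]$ and the difference condition reads $|i-j|\le 2$, so $\G_{6}(1)\G_{6}(5)=\G_{6}(0)\cup\G_{6}(2)\not\supset\G_{6}(4)$ and the odd class $\{1,3,5\}$ is inadmissible; instead $\{\G_{6}(1),\G_{6}(3)\}$ and $\{\G_{6}(3),\G_{6}(5)\}$ are two distinct maximal admissible (hence $\G_{6}(4)$-complete) sets, which together with $\{\G_{6}(2),\G_{6}(4)\}$ gives \emph{three} complete sets, all of order $2$, rather than the asserted two of orders $2$ and $3$.

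So the honest outcome of your approach is a bifurcation: for $2a\le n$ the diameter of $I$ is $a\le n-a$, the difference condition is automatic inside $I$, your argument closes, and both the count and the order formulas follow from the parity split (your complement involution $i\mapsto n-i$ correctly explains the equal orders for odd $a$, though it cannot repair the other regime); for $2a>n$ the theorem itself is false, and the paper's proof is unsound at exactly the spot you hesitated over. The defect propagates to the paper's corollary on $\G_{4n-2}(2n)$-complete sets, which fails already at $n=2$ by the example above, although the case actually used for Hadamard matrices, $\G_{4n}(2n)$ in $\Z_{2}^{4n}$, has $a$ equal to half the length, where the difference bound $n-a=a$ matches the diameter of $I$ and the classification survives. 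To salvage the general statement one must either add the hypothesis $a\le n/2$ or recount for $a>n/2$: within each parity class of $I$ the maximal admissible sets are the blocks of diameter at most $n-a$, and there are more than one of them per class.
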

\begin{proof}
We looking for $\G_{n}(a)$-complete $S$-sets in $\mathfrak{S}(\Z_{2}^{n},S_{n})$. By
(\ref{producto}) part I we have that if 

\begin{equation*}
\G_{n}(b)\G_{n}(b)=\bigcup\limits_{j=0}^{b}\G_{n}(n-2b+2j)\supset\G_{n}(a),
\end{equation*}

then $n-2b+2j=a$ only if $b\geq\frac{n-a}{2}$. Hence 
$\frac{n-a}{2}\leq b\leq\lceil\frac{n}{2}\rceil$. 
Also

\begin{equation*}
\G_{n}(b)\G_{n}(c)=\bigcup\limits_{j=0}^{b}\G_{n}(n-b-c+2j)\supset \G_{n}(a)
\end{equation*}

only if $b\leq c\leq n-b$ and $b+c=2q$, $q>0$. 
On the other hand, by (\ref{producto}) part II

\begin{equation*}
\G_{n}(b)\G_{n}(b)=\bigcup\limits_{j=0}^{n-b}\G_{n}(2b-n+2j)\supset \G_{n}(a)
\end{equation*}

only if $\lceil\frac{n}{2}\rceil+1\leq b\leq\frac{a+n}{2}$. And

\begin{equation*}
\G_{n}(b)\G_{n}(c)=\bigcup\limits_{j=0}^{n-b}\G_{n}(b+c-n+2j)\supset \G_{n}(a),
\end{equation*}

only if $n-b\leq c\leq b$. Then it follows that $b,c\in\left[\frac{n-a}{2},\frac{n+a}{2}\right]$.
Therefore if $a$ is an odd number, then there are sets $\E_{n}(a)$ and $\Odd_{n}(a)$ both of order 
$\frac{a+1}{2}$. If $a$ is an even number, then there are sets $\E_{n}(a)$ and $\Odd_{n}(a)$
of order $\frac{a}{2}$ and $\frac{a}{2}+1$. 
\end{proof}

\begin{definition}
We will say that a $\G_{2n}(a)$-complete $S$-set is a \textbf{complete even maximal} $S$-set if 
$\G_{2n}(a)$ is some of the following basic sets: $\G_{4m-2}(2m-2)$, $\G_{4m-2}(2m)$, 
$\G_{4m}(2m)$. Equally, we will say that a $\G_{2n-1}(a)$-complete $S$-set is a 
\textbf{complete odd maximal} $S$-set if $\G_{2n-1}(a)$ is some of the following basic sets: 
$\G_{4m-3}(2m-1)$, $\G_{4m-1}(2m-1)$.
\end{definition}

The following corollaries follow of the theorem and definition above

\begin{corollary}
There exists exactly two complete odd maximal $S$-sets, both with order $n$, in $\Z_{2}^{4n-1}$
\end{corollary}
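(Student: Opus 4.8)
The plan is to reduce the statement to the counting theorem proved just above, by first pinning down which basic sets of $\Z_{2}^{4n-1}$ are even allowed to be the distinguished target of a complete odd maximal $S$-set. By definition, such an $S$-set is a $\G_{4n-1}(a)$-complete $S$-set whose distinguished basic set $\G_{4n-1}(a)$ lies in one of the two families $\G_{4m-3}(2m-1)$ or $\G_{4m-1}(2m-1)$. So the first step is purely arithmetic on the dimension: I would match $4n-1$ against $4m-3$ and against $4m-1$, and check in which family (if any) an integer $m$ exists.

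For the first family, $4m-3=4n-1$ forces $m=n+\tfrac12$, which is not an integer; hence no basic set of $\Z_{2}^{4n-1}$ belongs to this family. For the second family, $4m-1=4n-1$ gives $m=n$, so the distinguished basic set is forced to be $\G_{4n-1}(2n-1)$. Thus there is exactly one admissible target weight, namely $a=2n-1$, and the entire question collapses to counting the $\G_{4n-1}(2n-1)$-complete $S$-sets and recording their orders.

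Then I would invoke the preceding theorem with ambient dimension $N=4n-1$ and weight $a=2n-1$. Its hypotheses are immediate to verify: $1\le 2n-1\le 4n-3=N-2$ for every $n\ge 1$, and $N-a=(4n-1)-(2n-1)=2n$ is even, so $(N-a)\equiv 0\pmod 2$. Since $a=2n-1$ is odd, the theorem delivers exactly two $\G_{4n-1}(2n-1)$-complete $S$-sets, both of order $(a+1)/2=n$; concretely these are the even-weight version $\E_{4n-1}(2n-1)$ and the odd-weight version $\Odd_{4n-1}(2n-1)$, which are distinct because their basic sets carry opposite weight parities. This yields exactly two complete odd maximal $S$-sets in $\Z_{2}^{4n-1}$, both of order $n$, as claimed.

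I do not expect a genuinely hard step here, since all the structural content sits in the earlier counting theorem. The only place demanding care is the bookkeeping in the first two paragraphs: confirming that $4n-1$ really matches the second maximal family and definitively not the first, and checking the two side conditions $1\le a\le N-2$ and $N-a$ even so that the counting theorem applies legitimately. Once these are in place, the conclusion is forced.
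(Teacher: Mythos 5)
Your proposal is correct and is essentially the paper's own argument made explicit: the paper offers no proof beyond the remark that the corollaries ``follow of the theorem and definition above,'' and your derivation --- matching $4n-1$ to the family $\G_{4m-1}(2m-1)$ to force $a=2n-1$, then verifying $1\le a\le (4n-1)-2$ and $(4n-1)-a=2n$ even so the counting theorem applies with $a$ odd, yielding exactly the two $S$-sets $\E_{4n-1}(2n-1)$ and $\Odd_{4n-1}(2n-1)$ of order $\frac{a+1}{2}=n$ --- is precisely that intended reduction, with the arithmetic bookkeeping (including ruling out the family $\G_{4m-3}(2m-1)$) carried out correctly.
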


\begin{corollary}
There exists exactly two complete odd maximal $S$-sets, both with order $n$, in $\Z_{2}^{4n-3}$
\end{corollary}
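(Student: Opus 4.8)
The plan is to reduce this to the general counting theorem for $\G_{N}(a)$-complete $S$-sets by first identifying precisely which basic set can produce a complete odd maximal $S$-set in the odd-dimensional group $\Z_{2}^{4n-3}$. I would match the ambient dimension against the two admissible forms appearing in the definition of \emph{complete odd maximal}: solving $4m-3=4n-3$ forces $m=n$ and hence weight $a=2m-1=2n-1$, whereas $4m-1=4n-3$ has no integer solution since $4m-1$ is never congruent to $4n-3$ modulo $4$. Thus the only basic set of $\mathfrak{S}(\Z_{2}^{4n-3},S_{4n-3})$ that can give a complete odd maximal $S$-set is $\G_{4n-3}(2n-1)$, so it suffices to count the $\G_{4n-3}(2n-1)$-complete $S$-sets.

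Next I would invoke the counting theorem with $N=4n-3$ in the role of the dimension and $a=2n-1$. The hypotheses are readily verified: $N-a=(4n-3)-(2n-1)=2n-2\equiv 0\Mod{2}$, and for $n\geq 2$ one has $1\leq 2n-1\leq 4n-5=N-2$, placing $a$ in the admissible range. Since $a=2n-1$ is odd, the theorem yields exactly two such complete $S$-sets, namely $\E_{4n-3}(2n-1)$ and $\Odd_{4n-3}(2n-1)$, and both have order $\frac{a+1}{2}=\frac{2n}{2}=n$.

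Finally, because both $\E_{4n-3}(2n-1)$ and $\Odd_{4n-3}(2n-1)$ are $\G_{4n-3}(2n-1)$-complete $S$-sets in the odd-dimensional $\Z_{2}^{4n-3}$, whose distinguished basic set is exactly of the form $\G_{4m-3}(2m-1)$ with $m=n$, both qualify as complete odd maximal, and the dimension-matching step shows that no other basic set does. This gives precisely two complete odd maximal $S$-sets, each of order $n$. The only genuinely delicate point is that first matching step: I would double-check that the family $\G_{4m-1}(2m-1)$ really contributes nothing in dimension $4n-3$ (a parity argument modulo $4$), and I would dispose of the degenerate case $n=1$ separately, since there the range $1\leq a\leq N-2$ collapses and the general theorem does not directly apply.
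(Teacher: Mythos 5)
Your proposal is correct and matches the paper's route exactly: the paper derives this corollary directly from the counting theorem (with $N=4n-3$, $a=2n-1$ odd, so both $S$-sets have order $\frac{a+1}{2}=n$) together with the definition of complete odd maximal $S$-sets, which is precisely your argument. Your added care — the mod $4$ check ruling out the family $\G_{4m-1}(2m-1)$ and the flagged degenerate case $n=1$ (which is covered by the separate theorem on $\G_{N}(N)$-complete $S$-sets) — goes slightly beyond the paper, which simply asserts the corollary follows.
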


\begin{corollary}
There exists exactly two complete even maximal $S$-sets of order $n$ and $n+1$ in $\Z_{2}^{4n}$
\end{corollary}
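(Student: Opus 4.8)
The plan is to obtain this statement as a direct specialization of the preceding counting theorem, the one asserting that for $1\le a\le n-2$ with $(n-a)\equiv 0\Mod 2$ there are exactly two $\G_{n}(a)$-complete $S$-sets, of orders $\tfrac{a}{2}$ and $\tfrac{a}{2}+1$ when $a$ is even. First I would pin down which of the three defining shapes of a complete even maximal $S$-set can actually occur inside $\Z_{2}^{4n}$. The ambient exponent must equal $4n$, so I would compare $4n$ against the three admissible subscripts $4m-2$, $4m-2$, $4m$. The two forms $\G_{4m-2}(2m-2)$ and $\G_{4m-2}(2m)$ have subscript congruent to $2\Mod 4$ and hence can never equal $4n$; only $\G_{4m}(2m)$ survives, and $4m=4n$ forces $m=n$. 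Thus the complete even maximal $S$-sets in $\Z_{2}^{4n}$ are precisely the $\G_{4n}(2n)$-complete $S$-sets.

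With that reduction in hand, I would apply the counting theorem with the role of its ambient exponent played by $4n$ and with $a=2n$. The hypotheses check out immediately: $1\le 2n\le 4n-2$ for every $n\ge 1$, and $(4n-2n)=2n\equiv 0\Mod 2$. Since $a=2n$ is even, the theorem furnishes exactly two $\G_{4n}(2n)$-complete $S$-sets, namely an $\E_{4n}(2n)$ and an $\Odd_{4n}(2n)$, whose orders are $\tfrac{a}{2}=n$ and $\tfrac{a}{2}+1=n+1$. Reading these numbers off completes the count, and the sanity check $n=1$ (the group $\Z_{2}^{4}$, where $a=2$ sits at the boundary $a=n-2$ and the orders are $1$ and $2$) confirms that the boundary case is covered.

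The only place that demands care — and it is bookkeeping rather than genuine difficulty — is the elimination step of the first paragraph: one must be certain that the subscript-$(4m-2)$ shapes are genuinely incompatible with the target dimension $4n$ and that the single surviving shape fixes $a=2n$ rather than some other value. Once that identification is made, everything else is a mechanical substitution into an already-proved theorem, so I would expect no real obstacle beyond verifying the range and parity conditions in its hypotheses.
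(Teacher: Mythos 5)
Your proposal is correct and is essentially the paper's own argument: the paper derives this corollary directly from the counting theorem and the definition of complete even maximal $S$-sets, i.e.\ exactly your specialization with ambient exponent $4n$ and $a=2n$, which is even and satisfies $1\le 2n\le 4n-2$ and $(4n-2n)\equiv 0\Mod{2}$, yielding two $S$-sets of orders $n$ and $n+1$. Your explicit elimination of the subscript-$(4m-2)$ shapes (impossible since $4m-2\equiv 2\Mod{4}$) is the same bookkeeping the paper leaves implicit.
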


\begin{corollary}
There exists exactly two $\G_{4n-2}(2n-2)$-complete $S$-sets of order $n$ and $n-1$, in 
$\Z_{2}^{4n-2}$.
\end{corollary}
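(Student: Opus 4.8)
The plan is to obtain this statement directly from the preceding general theorem by specializing its ambient dimension and target weight. In the notation of that theorem the ambient group is $\Z_{2}^{N}$ and the target basic set is $\G_{N}(a)$; here I would set $N=4n-2$ and $a=2n-2$, so that the object under study is exactly $\G_{4n-2}(2n-2)$. Note that this $\G_{4n-2}(2n-2)$ is, by the previous definition, one of the complete even maximal basic sets (the case $\G_{4m-2}(2m-2)$ with $m=n$), so the corollary is simply the maximal instance of the theorem in dimension $4n-2$.

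First I would verify the hypotheses of the theorem for this choice. The range condition $1\leq a\leq N-2$ becomes $1\leq 2n-2\leq 4n-4$, whose left inequality holds for $n\geq 2$ and whose right inequality is equivalent to $2\leq 2n$ and hence always holds. The parity condition $(N-a)\equiv 0\Mod{2}$ becomes $(4n-2)-(2n-2)=2n\equiv 0\Mod{2}$, which is satisfied. Finally, $a=2n-2$ is an even number, so the even branch of the theorem applies, guaranteeing exactly two $\G_{N}(a)$-complete $S$-sets.

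Invoking that branch, the theorem produces two complete $S$-sets of orders $a/2$ and $a/2+1$. Substituting $a=2n-2$ gives the two orders $a/2=n-1$ and $a/2+1=n$, which is precisely the claim. Since this is a specialization, there is no genuine obstacle: the work is entirely in matching parameters and reading off the orders. The one point requiring care is the notational collision, namely that the symbol $n$ is the ambient dimension inside the theorem but a free parameter here (with ambient dimension $4n-2$); I would keep the two roles strictly separate when transcribing the orders, and no further computation is needed.
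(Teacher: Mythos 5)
Your proof is correct and is exactly the paper's intended argument: the paper states that this corollary ``follows of the theorem and definition above,'' i.e.\ the same specialization $N=4n-2$, $a=2n-2$ of the general theorem, with the even branch yielding orders $\frac{a}{2}=n-1$ and $\frac{a}{2}+1=n$. Your parameter checks (range, parity, and the implicit restriction $n\geq 2$) are accurate, so nothing is missing.
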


\begin{corollary}
There exists exactly two $\G_{4n-2}(2n)$-complete $S$-sets of order $n$ and $n+1$, in 
$\Z_{2}^{4n-2}$.
\end{corollary}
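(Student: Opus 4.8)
The plan is to obtain this corollary as a direct specialization of the earlier counting theorem (the one asserting the existence of exactly two $\G_{N}(a)$-complete $S$-sets for $1\leq a\leq N-2$ with $(N-a)\equiv 0\pmod 2$). In that theorem the ambient dimension plays the role of $N$ and the target weight plays the role of $a$; here I would set $N=4n-2$ and $a=2n$, so that $\G_{N}(a)=\G_{4n-2}(2n)$ is exactly the basic set named in the statement, living in $\Z_{2}^{4n-2}$.

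First I would verify that the hypotheses of that theorem are satisfied. The range condition $1\leq a\leq N-2$ becomes $1\leq 2n\leq 4n-4$, whose only non-trivial inequality $2n\leq 4n-4$ holds for every $n\geq 2$; the small values are precisely the trivial cases already disposed of by the preceding theorems (the weights $a\in\{0,N-1,N\}$). The parity condition $(N-a)\equiv 0\pmod 2$ becomes $(4n-2)-2n=2n-2$, which is even, so it holds. Finally $a=2n$ is an even number, placing us in the even branch of the theorem.

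Since $a$ is even, the theorem yields exactly two $\G_{N}(a)$-complete $S$-sets, of orders $\tfrac{a}{2}$ and $\tfrac{a}{2}+1$. Substituting $a=2n$ gives orders $n$ and $n+1$, which is the asserted conclusion. Moreover, as $\G_{4n-2}(2n)$ has the shape $\G_{4m-2}(2m)$ with $m=n$, these two sets coincide with the complete even maximal $S$-sets of the earlier definition, so no additional identification step is required.

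The only point demanding care — and thus the "main obstacle," though it is essentially bookkeeping rather than genuine difficulty — is correctly reading off $N-a$ and $a$ so as to land in the even-$a$ branch and not misapply the odd-$a$ branch, together with checking that the range bound $1\leq 2n\leq 4n-4$ excludes exactly the trivial weights already treated. Once these parities are confirmed, the result is immediate from the quoted theorem.
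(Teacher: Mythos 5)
Your proposal is correct and matches the paper's intent exactly: the paper derives this corollary directly from the counting theorem (substituting $N=4n-2$, $a=2n$, with $N-a=2n-2$ even and $a$ even, yielding orders $\tfrac{a}{2}=n$ and $\tfrac{a}{2}+1=n+1$), offering no further argument. Your additional checks of the range condition $1\leq 2n\leq 4n-4$ and the identification with the shape $\G_{4m-2}(2m)$ are sound bookkeeping consistent with the paper.
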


For being the Hadamard matrices of order $4n$, we are interested only in complete even maximal
$S$-sets in $\Z_{2}^{4n}$. Next, we show the complete even maximal $S$-sets in $\Z_{2}^{4n}$ for
$n=1,2,3$

\begin{example}
The following are complete maximal $S$-sets of $\Z_{2}^{4n}$ for $n=1,2,3$.
\begin{enumerate}
\item If $n=1$

\begin{eqnarray*}
\E_{4}(2)&=& \left\{ \G_{4}(2)\right\},\\
\mathcal{O}_{4}(2)&=& \left\{\G_{4}(1),\G_{4}(3)\right\} .
\end{eqnarray*}

\item If $n=2$

\begin{eqnarray*}
\E_{8}(4)&=& \left\{ \G_{8}(2),\G_{8}(4),\G_{8}(6)\right\}\\
\mathcal{O}_{8}(4)&=&\left\{ \G_{8}(3),\G_{8}(5)\right\} 
\end{eqnarray*}

\item If $n=3$

\begin{eqnarray*}
\E_{12}(6)&=&\left\{ \G_{12}(4),\G_{12}(6),\G_{12}(8)\right\}\\
\mathcal{O}_{12}(6)&=&\left\{ \G_{12}(3),\G_{12}(5),\G_{12}(7),\G_{12}(9)\right\}. 
\end{eqnarray*}

\end{enumerate}
\end{example}

\section{Schur ring with circulant basic sets}

In this section Schur ring with circulant basic sets are studied. Also, we define the free and 
non-free circulant $S$-sets, the symmetric, non-symmetric and antisymmetric circulant $S$-sets. 
We prove that all this $S$-sets are invariants under decimation.

\subsection{Circulant basic sets}

Let $C$ denote the cyclic permutation on the components $+$ and $-$ of $X$ in 
$\Z_{2}^{n}$ such that
\begin{equation}\label{cir1}
C(X)=C\left( x_{0},x_{1},...,x_{n-2},x_{n-1}\right) =\left(x_{1},x_{2},x_{3},...,x_{0}\right),
\end{equation}
that is, $C(x_{i})=x_{(i+1) mod n}$. The permutation $C$ is a generator of cyclic group 
$C_{n}=\left\langle C\right\rangle$ of order $n$. Let 
$X_{C}=Orb_{C_{n}}X=\{C^{i}(X):C^{i}\in C_{n} \}$. Therefore, $C_{n}$ defines a partition in
equivalent class on $\Z_{2}^{n}$ which is an $S$-partition and this we shall denote by 
$\Z_{2C}^{n}=\mathfrak{S}(\Z_{2}^{n},C_{n})$. It is worth mentioning that this Schur ring 
corresponds to the orbit Schur ring induced by the cyclic permutation automorphic subgroup 
$C_{n}\le S_n\le Aut(\Z_2^n)$. Likewise, it is worth noting that the Schur ring 
$\mathfrak{S}(\Z_{2}^{n},S_{n})$ is an orbit Schur ring induced by the permutation automorphic
subgroup $S_n\le Aut(\Z_2^n)$.

On the other hand, in general $\left\vert X_{C}\right\vert\neq n$, $X_{C} \in \Z_{2C}^{n}$. 
For example

\begin{equation}\label{cir2}
X=\left(+----+----+----\right) \in \G_{15}(3)\subset \Z_{2}^{15}
\end{equation}

has orbit size 5, not 15.

Now, let $X_{C},Y_{C}\in\Z_{2C}^{n}$, $X\neq Y$, such that $\lvert X_{C}\rvert = d_{1}$ and 
$\lvert Y_{C}\rvert = d_{2}$, $d_{1},d_{2}\mid n$. It is easy to see that 
$C_{n}$ defines a partition on $X_{C}Y_{C}$. Therefore, there are $Z_{iC}$ such that

\begin{equation}
X_{C}Y_{C}= \bigcup_{i=0}^{M-1}Z_{iC}
\end{equation}

and $\mid Z_{iC}\mid = N$, where $M=min\{d_{1},d_{2}\}$ and $N=max\{d_{1},d_{2}\}$.
When $X=Y$, we make $X_{C}X_{C}=X_{C}^{2}$ and $Z_{0C}=1_{C}$. The $S$-partition $\Z_{2C}^{n}$ 
defines a Schur ring where each $X_{C}$ in $\Z_{2C}^{n}$ will be called 
\textbf{circulant basic set}. An $S$-set de $\Z_{2C}^{n}$ will be called circulant $S$-set. 
Also it is easy to see that

\begin{eqnarray*}
1_{C}X_{C}&=&X_{C},\\
X_{C}Y_{C}&=&Y_{C}X_{C},\\
X_{C}(Y_{C}Z_{C})&=&(X_{C}Y_{C})Z_{C}
\end{eqnarray*}
with $X_{C},Y_{C},Z_{C}\in\Z_{2C}^{n}$.\\

On the other hand, let 
\begin{equation}
F_{d}(\Z_{2}^{n})=\bigcup_{\vert X_{C}\vert=d}X.
\end{equation}
Clearly $d$ divides to $n$ and the $X\in F_{d}(\Z_{2}^{d})$ have the form $X=(Y,Y,...,Y)$, with 
$Y\in\Z_{2}^{d}$. Then $F_{d}(\Z_{2C}^{n})=\bigcup_{\vert X_{C}\vert=d}X_{C}$ is an $S$-set of
$\Z_{2C}^{n}$, for each $d\vert n$. When $d=n$, we will to say that $C_{n}$ acts freely on 
$X_{C}$ and we denote $F_{n}(\Z_{2C}^{n})$ as $F(\Z_{2C}^{n})$. When $d<n$, we will to say that
$C_{n}$ don't act freely on $X_{C}$ and let $\widehat{F}(\Z_{2C}^{n})$ denote the set of the 
$X_{C}$ which are not frees under the action of $C_{n}$, namely

\begin{equation}\label{Z_not_libre}
\widehat{F}(\Z_{2C}^{n})=\bigcup_{d\mid n,d<n}F_{d}(\Z_{2C}^{n}).
\end{equation}

Therefore, 
\begin{eqnarray}\label{Z_libre}
\Z_{2C}^{n}&=&F(\Z_{2C}^{n})\cup \widehat{F}(\Z_{2C}^{n})\nonumber\\
&=&\bigcup_{d\mid n}F_{d}(\Z_{2C}^{n}). 
\end{eqnarray}

Take $d$ a divisor of $n$. We can see that $\bigcup_{r\mid d}F_{r}(\Z_{2C}^{n})$ is an
$S$-subgroup of the $S$-ring $\mathfrak{S}(\Z_{2}^{n},C_{n})$. 

On the other hand, when $n=p$ is an odd prime number 
$\left\vert\G_{p}(a)\right\vert$ is divisible by $p$, $0< a< p$, therefore 
$\left\vert X_{C}\right\vert =p$ for all $X_{C}\in\G_{p}(a)$ and 
$\widehat{F}(\Z_{2C}^{p})=\{\G_{p}(0),\G_{p}(p)\}$.

Now, we define $\G_{d}^{(n/d)}(a)=\G_{d}(a)\times \cdots \times \G_{d}(a)$, $n/d$ times. Then

\begin{equation}\label{cir4}
\G_{n}(an/d)\supset \G_{d}^{(n/d)}(a).
\end{equation}

and

\begin{equation}
\bigcup_{a=1}^{d-1}\G_{n}(an/d)\supset\bigcup_{a=1}^{d-1}\G_{d}^{(n/d)}(a)\supset F_{d}(\Z_{2}^{n}).
\end{equation}

If $n=p^{m}$ and if $s=rp^{k}, p\nmid r$,
\begin{eqnarray}\label{serie}
\G_{p^{m}}(s)&=&\G_{p^{m}}( rp^{k})\nonumber\\
&\supset& \G_{p^{m-1}}^{(p)}(rp^{k-1})\nonumber\\
&\supset&\G_{p^{m-2}}^{(p^{2})}(rp^{k-2})\\
&\vdots&\nonumber\\
&\supset&\G_{p^{m-k}}^{(p^{k})}(r)\nonumber
\end{eqnarray}

and

\begin{equation}
\bigcup_{r=1}^{p^{m-k}-1}\G_{p^{m-k}}^{(p^{k})}(r)\supset F_{p^{m-k}}(\Z_{2}^{p^{m}}).
\end{equation}

From (\ref{serie}) is followed that $\left\vert X_{C}\right\vert$ takes values in 
$p^{m},p^{m-1},\dots,p^{2},p,1$ for all $X_{C}\in \Z_{2C}^{p^{m}}$. 

In the following theorems we will show that a sufficient condiction for that 
$X_{C}^{2}\setminus\{1\}$ belong to $F(\Z_{2}^{n})$ is the parity of $n$

\begin{theorem}
Let $n$ be an even number. If $X_{C}\in F(\Z_{2C}^{n})$, then $X_{C}^{2}\setminus\{1\}\not\in F(\Z_{2C}^{n})$.
\end{theorem}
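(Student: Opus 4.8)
The plan is to expand the square $\overline{X_{C}}^{2}$ in the Schur ring and to exhibit, using the evenness of $n$, a single non-free basic set occurring in it. Since $X_{C}\in F(\Z_{2C}^{n})$ the orbit is free of size $n$, so $\overline{X_{C}}=\sum_{i=0}^{n-1}C^{i}(X)$ with $n$ distinct summands. Because $C$ is an automorphism of $\Z_{2}^{n}$ it commutes with the (componentwise) group multiplication, so $C^{i}(X)\,C^{j}(X)=C^{i}\bigl(X\,C^{\,j-i}(X)\bigr)$. Reindexing the double sum by $k=j-i \pmod n$ gives
\begin{equation*}
\overline{X_{C}}^{2}=\sum_{k=0}^{n-1}\sum_{i=0}^{n-1}C^{i}\bigl(X\,C^{k}(X)\bigr).
\end{equation*}
The term $k=0$ equals $n\cdot 1_{C}$, since $X\,X=e$ in $\Z_{2}^{n}$; hence $X_{C}^{2}\setminus\{1\}$ is, up to positive multiplicities, the union of the orbits of the elements $W_{k}=X\,C^{k}(X)$ for $1\le k\le n-1$.

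The key step uses parity. As $n$ is even I would set $k=n/2$ and put $W=X\,C^{n/2}(X)$. Applying $C^{n/2}$ and using $C^{n}=\mathrm{id}$ together with the fact that $C^{n/2}$ commutes with multiplication,
\begin{equation*}
C^{n/2}(W)=C^{n/2}(X)\,C^{n}(X)=C^{n/2}(X)\,X=W,
\end{equation*}
so $W$ is fixed by $C^{n/2}$. Therefore the smallest period of $W$ divides $n/2$, its orbit size divides $n/2$ and is in particular strictly smaller than $n$, and $W_{C}\in\widehat{F}(\Z_{2C}^{n})$ is not free.

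It remains to check that $W$ is a genuine non-identity basic set of $X_{C}^{2}\setminus\{1\}$. If we had $W=e$, then $C^{n/2}(X)=X^{-1}=X$ (every element of $\Z_{2}^{n}$ is its own inverse), forcing $X$ to have period dividing $n/2$ and contradicting freeness of $X_{C}$ (orbit size $n$ means $C^{j}(X)\ne X$ for $0<j<n$). Hence $W\ne e$, so $W_{C}$ occurs with positive coefficient in $X_{C}^{2}\setminus\{1\}$. Since $X_{C}^{2}\setminus\{1\}$ then contains the non-free basic set $W_{C}$, it cannot be supported on $F(\Z_{2C}^{n})$, i.e.\ $X_{C}^{2}\setminus\{1\}\notin F(\Z_{2C}^{n})$. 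The only real content is spotting that evenness of $n$ makes $k=n/2$ available, producing an element with a half-period; the rest is bookkeeping of the product expansion and of the multiplicities, which I do not expect to cause trouble.
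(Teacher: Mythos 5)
Your proof is correct and takes essentially the same route as the paper's: both consider $W=X\,C^{n/2}(X)$, observe that $C^{n/2}(W)=C^{n/2}(X)\,C^{n}(X)=W$, and conclude that the orbit of $W$ has size at most $n/2$, hence lies in $\widehat{F}(\Z_{2C}^{n})$. Your extra check that $W\neq e$ (so that $W_{C}$ survives the removal of $1_{C}$) is a detail the paper leaves implicit, and is a welcome tightening rather than a different argument.
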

\begin{proof}
We make $XC^{n/2}(X)=Y_{n/2}$. Then $XC^{n/2}(X)=C^{n/2}(Y_{n/2})$ and $Y_{n/2}=C^{n/2}(Y_{n/2})$. 
Therefore $Y_{n/2}=(A,A)$ for some $A\in\Z_{2}^{n/2}$ and $\left\vert(XC^{n/2}(X))_{C}\right\vert$
has at most order $n/2$. Hence $X_{C}^{2}\not\in F(\Z_{2C}^{n})$.
\end{proof}

\begin{theorem}
Let $n$ be an odd number. If $X_{C}\in F(\Z_{2C}^{n})$, then $X_{C}^{2}\setminus\{1\}\in F(\Z_{2C}^{n})$.
\end{theorem}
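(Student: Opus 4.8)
The plan is to unwind the definition of the square and reduce the claim to a statement about the orbit sizes of the individual products $X\,C^{k}(X)$. Writing the group operation of $\Z_{2}^{n}$ coordinatewise and using that $C$ is an automorphism, $C^{i}(X)\,C^{j}(X)=C^{i}\!\left(X\,C^{\,j-i}(X)\right)$, so as $i,j$ range over $\{0,\dots,n-1\}$ the set $X_{C}^{2}$ is exactly the union of the orbits of the elements $Y_{k}:=X\,C^{k}(X)$, $0\le k\le n-1$. The case $k=0$ gives the identity, hence $X_{C}^{2}\setminus\{1\}=\bigcup_{k=1}^{n-1}(Y_{k})_{C}$, and the theorem amounts to showing that each $Y_{k}$ with $1\le k\le n-1$ lies in a free orbit. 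First I would record that $Y_{k}\neq 1$: indeed $Y_{k}=1$ forces $X=C^{k}(X)$, so $X$ is fixed by $C^{\gcd(k,n)}$, and since $\gcd(k,n)$ is a proper divisor of $n$ this contradicts $X_{C}\in F(\Z_{2C}^{n})$. Likewise $Y_{k}\neq -1$ for odd $n$, because summing the identity $x_{i}x_{i+k}=-1$ over all coordinates is inconsistent with $n$ odd. These two observations are short and dispose of the two constant sequences.

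The substantive work is to show that no $Y_{k}$ is fixed by $C^{d}$ for a proper divisor $d\mid n$. I would pass to additive coordinates: write $x_{i}=(-1)^{a_{i}}$ with $a\in\mathbb{F}_{2}^{n}$, let $S$ denote the shift, and work in $R=\mathbb{F}_{2}[S]/(S^{n}+1)$, identifying $a$ with a polynomial. In these terms $X_{C}\in F$ means $(S^{e}+1)a\neq 0$ for every proper divisor $e\mid n$; the element $Y_{k}$ corresponds to $(S^{k}+1)a$; and ``$Y_{k}$ fixed by $C^{d}$'' becomes $(S^{d}+1)(S^{k}+1)a=0$. The hypothesis enters precisely here: for odd $n$ one has $\gcd(S^{n}+1,(S^{n}+1)')=\gcd(S^{n}+1,S^{n-1})=1$, so $S^{n}+1$ is separable and $R$ is a product of fields, with one field factor for each irreducible factor, attached to roots of a definite multiplicative order $o\mid n$. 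By contrast, for even $n$ one has $S^{n}+1=(S^{n/2}+1)^{2}$, and it is exactly this repeated factor that forces the always-present small orbit $Y_{n/2}$ treated in the even case above.

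Working factor by factor, $a$ decomposes as a tuple indexed by the irreducible factors, with support $J$ the set of factors on which its component is nonzero and order $o_{j}$ attached to $j\in J$; multiplication by $S^{k}+1$ annihilates exactly the components whose order divides $\gcd(k,n)$. Translating the definitions, $X_{C}\in F$ says $\mathrm{lcm}\{o_{j}:j\in J\}=n$, while $Y_{k}$ lies in a free orbit iff the same lcm, taken after discarding the orders dividing $\gcd(k,n)$, still equals $n$. The hard part, and the step I expect to be the main obstacle, is exactly this order bookkeeping: one must rule out that deleting all roots of order dividing $\gcd(k,n)$ lowers the overall order below $n$. When $n=p$ is prime the accounting collapses at once, since by the earlier remark every nontrivial orbit in $\Z_{2C}^{p}$ is already free and the two constant sequences were excluded in the first paragraph. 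For a composite odd $n$ the count is far more delicate, and I would scrutinise it closely — in particular whether the separability of $S^{n}+1$ alone is enough, or whether the argument in fact needs a finer arithmetic hypothesis on $n$ such as primality.
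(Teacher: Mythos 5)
Your reduction of the theorem to the freeness of each $Y_{k}=X\,C^{k}(X)$, $1\le k\le n-1$, and your translation into $R=\mathbb{F}_{2}[S]/(S^{n}+1)$ are both correct, and your bookkeeping criterion is exactly right: if $J$ is the support of $a$ among the irreducible factors of $S^{n}+1$ and $o_{j}$ the order attached to $j\in J$, then $X_{C}\in F(\Z_{2C}^{n})$ iff $\mathrm{lcm}\{o_{j}:j\in J\}=n$, and $(Y_{k})_{C}$ is free iff the lcm taken after discarding the $o_{j}$ dividing $\gcd(k,n)$ is still $n$. But the step you flag as the main obstacle is a genuine gap, and it cannot be closed: your suspicion that separability alone does not suffice is correct, because the theorem as stated is \emph{false} once odd $n$ has two distinct prime divisors. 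Take $n=15$ and let $a\in\mathbb{F}_{2}^{15}$ be the indicator of $\{3,5,6,9,10,12\}$ (the symmetric difference of the multiples of $3$ and of $5$), with $x_{i}=(-1)^{a_{i}}$. One checks $a$ is fixed by no shift $S^{3}$ or $S^{5}$, so $X_{C}\in F(\Z_{2C}^{15})$; in your decomposition the support of $a$ is one factor of order $3$ and one of order $5$, so the lcm is $15$ \emph{without any component of order $15$}. Multiplying by $S^{3}+1$ kills the order-$3$ component and keeps the order-$5$ one: concretely $z_{i}=a_{i}+a_{i+3}$ is the indicator of $\{0,2,5,7,10,12\}$, which is invariant under the shift by $5$. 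Hence $Y_{3}=X\,C^{3}(X)$ is a nontrivial element with $|(Y_{3})_{C}|=5$, so $X_{C}^{2}\setminus\{1\}\notin F(\Z_{2C}^{15})$. The lcm can drop below $n$ precisely when it is achieved compositely, which is possible exactly when $n$ is divisible by two distinct primes.

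For comparison, the paper's own proof attempts something quite different from your ring-theoretic route: from $(XC^{a}X)_{C}\in F_{d}(\Z_{2C}^{n})$ it tries to transfer periodicity from the product back to $X$ itself, setting $Z=XC^{a}X$ and $W=XC^{d}X$ and correctly obtaining $Z=C^{d}Z$ and $W=C^{a}W$, but then asserting the coordinate chain $x_{0}=x_{2a}=x_{4a}=\cdots$, which would follow from $Z=C^{a}Z$ (period $a$), not from the actual hypothesis $Z=C^{d}Z$; the intermediate claim $a\mid n$ is likewise unjustified. In the example above (with $a=3$, $d=5$) that chain already fails at $a_{0}\neq a_{6}$, and indeed $X$ acquires no period, so the paper's proof is erroneous precisely at this transfer step, and the counterexample shows the error is not repairable. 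What your framework does salvage is the correct special case: for $n=p^{m}$ an odd prime power all $o_{j}$ are powers of $p$, so $\mathrm{lcm}=n$ forces a component of exact order $n$, which survives multiplication by $S^{k}+1$ for every $k$ with $\gcd(k,n)<n$; thus the theorem and your argument are valid for odd prime powers (consistent with your remark about primes), and the honest conclusion of your analysis is that the hypothesis ``$n$ odd'' must be strengthened to ``$n$ an odd prime power''.
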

\begin{proof}
Suppose $X_{C}^{2}\setminus\{1\}\not\in F(\Z_{2C}^{n})$. Then there is 
$a\in \left[1,n-1\right]$ such that $(XC^{a}X)_{C}\in F_{d}(\Z_{2C}^{n})$, with $d\mid n$. 
On the one hand, putting $Z=XC^{a}X$ we have $Z=C^{d}Z$ and $ZC^{d}Z=1$ where it follow 
that $Z=(A,A,...,A)$ with $A\in\Z_{2}^{d}$. On the other hand, putting $W=XC^{d}X$ we have 
$W=C^{a}W$, $WC^{a}W=1$ and $a\vert n$. Then $W=(B,B,...,B)$ with $B\in\Z_{2}^{a}$. Now, suppose 
that $a<d$ and let
$$X=(x_{0},x_{1},...,x_{a-1},x_{a},...,x_{d-1},x_{d},...,x_{n-1}).$$
Then $XC^{a}X=(A,A,...,A)$ implies that 
\begin{equation*}
\begin{array}{c}
x_{0}=x_{2a}=x_{4a}=\cdots\\
x_{1}=x_{2a+1}=x_{4a+1}=\cdots\\
\vdots\\
x_{a-1}=x_{3a-1}=x_{5a-1}=\cdots\\
x_{a}=x_{3a}=x_{5a}=\cdots\\
\vdots\\
x_{2a-1}=x_{4a-1}=x_{6a-1}=\cdots\\
\end{array}
\end{equation*}
Then $X=(Y,Y,...,Y)$ with $Y\in\Z_{2}^{2a}$. But this is impossible because $n$ is an odd number.
Equally, $XC^{d}X=(B,B,...,B)$ implies that $X=(Y,Y,...,Y)$ with $Y\in\Z_{2}^{2d}$, which is not
possible because $n$ is an odd number. Therefore $X_{C}^{2}\setminus\{1\}\not\in F(\Z_{2C}^{n})$
leads to a contradiction and $X_{C}^{2}\setminus\{1\}\in F(\Z_{2C}^{n})$.
\end{proof}

\subsection{Circulant $S$-Sets Invariant by Decimation}

Let $\delta_{k}\in S_{n-1}$ act on $X\in\Z_{2}^{n}$ by decimation, that is, 
$\delta_{k}(x_{i})=x_{ki(\mod n)}$ for all $x_{i}$ in $X$, $(k,n)=1$ and let $\Delta_{n}$ denote 
the set of this $\delta_{k}$. The set $\Delta_{n}$ is a group of order $\phi(n)$ isomorphic to
$\Z_{n}^{*}$, the group the units of $\Z_{n}$, where $\phi$ is called the Euler totient function. 
In this section another $S$-partitions on $\Z_{2}^{n}$ are constructed via the action of the group 
$\Delta_{n}$ and $\Delta_{n}C_{n}$. Also, circulant $S$-sets invariant by decimation are defined. 
We begin with the following $S$-partition

\begin{theorem}
$\mathfrak{S}(\Z_{2}^{n},\Delta_{n})$ is an $S$-partition.
\end{theorem}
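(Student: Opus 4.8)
The plan is to recognize $\mathfrak{S}(\Z_{2}^{n},\Delta_{n})$ as an instance of the orbit Schur ring construction used earlier for $C_{n}$ and $S_{n}$, and then verify the three defining axioms of an $S$-partition directly. The starting observation is that each $\delta_{k}\in\Delta_{n}$ is the coordinate permutation $i\mapsto ki\pmod n$ acting on $\Z_{2}^{n}$, which is well defined and bijective precisely because $(k,n)=1$; and any permutation of coordinates is an automorphism of the direct power $\Z_{2}^{n}$. Since $\Delta_{n}\cong\Z_{n}^{*}$ is already known to be a group, we obtain $\Delta_{n}\le Aut(\Z_{2}^{n})$, so the basic sets of $\mathfrak{S}(\Z_{2}^{n},\Delta_{n})$ are the orbits $Orb_{\Delta_{n}}(X)=\{\delta_{k}(X):\delta_{k}\in\Delta_{n}\}$, and these partition $\Z_{2}^{n}$.

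Next I would check axioms 1 and 2. For axiom 1, the identity $e=(+,+,\dots,+)$ is fixed by every automorphism, so its orbit is the singleton $\{e\}$, which we take as $T_{0}$. Axiom 2 is immediate, since every element of $\Z_{2}^{n}$ is its own inverse and hence $T^{-1}=T$ for each basic set $T$; equivalently, because each $\delta_{k}$ is an automorphism we have $\delta_{k}(X^{-1})=\delta_{k}(X)^{-1}$, so $Orb_{\Delta_{n}}(X)^{-1}=Orb_{\Delta_{n}}(X^{-1})$ is again a basic set.

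The heart of the argument is axiom 3, closure of the span under products. Writing $T_{i},T_{j}$ for two basic sets and expanding $\overline{T_{i}}\,\overline{T_{j}}=\sum_{z\in\Z_{2}^{n}}c(z)\,z$, the coefficient $c(z)=\lvert\{(g,g')\in T_{i}\times T_{j}:gg'=z\}\rvert$ counts factorizations of $z$. The key point is that $c$ is constant on $\Delta_{n}$-orbits: for $\delta\in\Delta_{n}$ the automorphism property gives $\delta(g)\delta(g')=\delta(gg')$, and since $\delta$ permutes each of $T_{i}$ and $T_{j}$ among themselves, the map $(g,g')\mapsto(\delta(g),\delta(g'))$ is a bijection from the factorizations of $z$ onto the factorizations of $\delta(z)$; hence $c(\delta(z))=c(z)$. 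Therefore $c$ takes a single nonnegative integer value $\lambda_{i,j,k}$ on each basic set $T_{k}$, and $\overline{T_{i}}\,\overline{T_{j}}=\sum_{k}\lambda_{i,j,k}\,\overline{T_{k}}$, which is exactly axiom 3.

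The one step requiring genuine care, as opposed to the routine orbit bookkeeping, is the very first reduction: confirming that decimation acts as a group automorphism of $\Z_{2}^{n}$ and not merely as a set bijection. Once that is in place, the remainder is the standard Schur--Wielandt fact that the orbits of any subgroup of $Aut(G)$ form an $S$-partition, and nothing specific to decimation is needed beyond $(k,n)=1$ guaranteeing that $\delta_{k}$ is a bona fide coordinate permutation.
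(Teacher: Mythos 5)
Your proof is correct and takes essentially the same route as the paper: both verify the three orbit $S$-partition axioms directly for the $\Delta_{n}$-orbits, namely that $1_{\Delta_{n}}=\{1\}$, that each orbit is inverse-closed (automatic since every element of $\Z_{2}^{n}$ is an involution), and that products of orbit sums stay in the span. The only difference is in axiom 3, where the paper records the set identity $X_{\Delta_{n}}Y_{\Delta_{n}}=\bigcup_{\delta_{r}\in\Delta_{n}}(X\delta_{r}Y)_{\Delta_{n}}$ while you prove constancy of the coefficient function on orbits via the bijection $(g,g')\mapsto(\delta(g),\delta(g'))$ --- a slightly more complete rendering of the same standard Schur--Wielandt orbit argument, and your explicit check that $(k,n)=1$ makes $\delta_{k}$ a genuine coordinate permutation (hence an automorphism) is the step the paper leaves implicit.
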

\begin{proof}
Let $X_{\Delta_{n}}$ denote the orbit of $X\in\Z_{2}^{n}$ under the action of $\Delta_{n}$. It is 
too easy to see that $1_{\Delta_{n}}=1$, $X_{\Delta_{n}}^{-1}=X_{\Delta_{n}}$ and also that
$X_{\Delta_{n}}Y_{\Delta_{n}}=\bigcup_{\delta_{r}\in\Delta_{n}}(X\delta_{r}Y)_{\Delta_{n}}$.
\end{proof}

Now, we show that the $S$-sets $F_{d}(\Z_{2C}^{n})$ are invariants by decimation. With this
result we can then easily to obtain another $S$-partition on $\Z_{2}^{n}$.

\begin{theorem}
If $X_{C}\in F_{d}(\Z_{2C}^{n})$, then $(\delta_{r}X)_{C}\in F_{d}(\Z_{2C}^{n})$. 
\end{theorem}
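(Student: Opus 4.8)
The plan is to restate membership in $F_{d}(\Z_{2C}^{n})$ as a statement about orbit cardinality and then to exhibit decimation as a symmetry that normalizes the shift group $C_{n}$, so that it carries $C_{n}$-orbits to $C_{n}$-orbits of the same size. First I would record, via the orbit--stabilizer theorem, that $X_{C}\in F_{d}(\Z_{2C}^{n})$ means precisely $\lvert X_{C}\rvert=d$; since $C_{n}$ is cyclic of order $n$, the stabilizer of $X$ is then its unique subgroup $\langle C^{d}\rangle$ of order $n/d$, so $d$ is the least positive integer with $C^{d}X=X$, and in fact $C^{m}X=X$ holds if and only if $d\mid m$. Thus it suffices to prove $\lvert(\delta_{r}X)_{C}\rvert=d$.

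The key step is to compute how $\delta_{r}$ and $C$ commute. Comparing the two actions coordinatewise, $(\delta_{r}(CX))_{i}=x_{ri+1}$ while $(C^{s}(\delta_{r}X))_{i}=x_{ri+rs}$, all indices read modulo $n$; choosing $s\equiv r^{-1}\pmod n$, which is legitimate because $(r,n)=1$, yields the conjugation relation $\delta_{r}C=C^{r^{-1}}\delta_{r}$, and hence $\delta_{r}C^{i}=C^{ir^{-1}}\delta_{r}$ for every $i$. Since $r^{-1}$ is again a unit modulo $n$, $C^{r^{-1}}$ generates $C_{n}$, so $\delta_{r}$ normalizes $C_{n}$ inside the symmetric group acting on $\Z_{2}^{n}$.

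From the conjugation relation I would conclude that $\delta_{r}(X_{C})=(\delta_{r}X)_{C}$: as $i$ runs over $\Z_{n}$ so does $ir^{-1}$, whence
\[
\delta_{r}(X_{C})=\{C^{ir^{-1}}(\delta_{r}X):i\in\Z_{n}\}=\mathrm{Orb}_{C_{n}}(\delta_{r}X)=(\delta_{r}X)_{C}.
\]
Because $\delta_{r}$ is a bijection of $\Z_{2}^{n}$, it preserves cardinalities, so $\lvert(\delta_{r}X)_{C}\rvert=\lvert\delta_{r}(X_{C})\rvert=\lvert X_{C}\rvert=d$, and therefore $(\delta_{r}X)_{C}\in F_{d}(\Z_{2C}^{n})$, as required.

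I expect the main obstacle to be pinning down the conjugation relation with the correct exponent and handling the modular index arithmetic without sign errors; everything downstream is then formal. An entirely elementary alternative avoids conjugation altogether: one checks directly that $d$ is a period of $\delta_{r}X$ (iterating $x_{j+d}=x_{j}$ gives $x_{j+rd}=x_{j}$), so the least period $d'$ of $\delta_{r}X$ divides $d$; conversely $C^{d'}(\delta_{r}X)=\delta_{r}X$ forces $x_{ri+rd'}=x_{ri}$ for all $i$, and since $(r,n)=1$ the residues $ri$ exhaust $\Z_{n}$, giving $C^{rd'}X=X$ and hence $d\mid rd'$. Because $(r,n)=1$ with $d\mid n$ we have $(r,d)=1$, so $d\mid d'$. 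Either route delivers $d'=d$, but in the elementary version the coprimality fact $(r,n)=1\Rightarrow(r,d)=1$ is the real linchpin.
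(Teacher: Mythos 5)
Your proof is correct, but it is not the paper's argument, and in one respect it is more careful. The paper proves the theorem by unfolding the definition of $F_{d}$: it writes $X=(Y,Y,\dots,Y)$ with $Y=(y_{0},\dots,y_{d-1})\in\Z_{2}^{d}$ repeated $n/d$ times, and observes that by this periodicity $\delta_{r}$ acts on the block as the decimation $y_{i}\mapsto y_{ri \bmod d}$, repeated $n/d$ times, so $\delta_{r}X$ is again $d$-periodic. Strictly speaking, that computation only shows the orbit of $\delta_{r}X$ has size \emph{dividing} $d$, i.e.\ $(\delta_{r}X)_{C}\in F_{d'}$ for some $d'\mid d$; exactness is left implicit (it would follow by applying the same argument to $\delta_{r^{-1}}$). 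Both of your routes supply precisely this missing half: in the conjugation route, the relation $\delta_{r}C=C^{r^{-1}}\delta_{r}$ shows $\delta_{r}$ normalizes $C_{n}$ and hence carries orbits bijectively onto orbits, so $\lvert(\delta_{r}X)_{C}\rvert=\lvert X_{C}\rvert=d$ on the nose; in the elementary route, the coprimality step $(r,n)=1\Rightarrow(r,d)=1$ pins the minimal period of $\delta_{r}X$ at exactly $d$. Your commutation relation is the paper's identity (\ref{C_d}) read in the other direction; note that the paper records it only \emph{after} this theorem and uses it in the subsequent corollary to conclude $\delta_{r}X_{C}=(\delta_{r}X)_{C}$, which is essentially your first route promoted to the main argument. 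Your second, elementary route is the one closest in spirit to the paper's own proof (both rest on $d$-periodicity), but with the converse divisibility $d\mid d'$ made explicit. In short: a valid, self-contained proof, structurally different from the paper's block computation, and tighter on the point of orbit-size exactness.
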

\begin{proof}
Since $C_{n}$ doesn't act freely on $X$, then $X=(Y,Y,\dots,Y)\in F_{d}(\Z_{2}^{n})$ with 
$Y=(y_{0},y_{1},...,y_{d-1})\in \Z_{2}^{d}$, $d\mid n$. 
By the periodicity of $X$, $\delta_{r}$ maps $y_{i}\rightarrow y_{ri \Mod{d}}$, $0\leq i \leq d-1$, 
$\frac{n}{d}$ times.
\end{proof}

Let $G=\Delta_{n}C_{n}$ and let $X_{G}$ denote the orbit of $X$ under the action of $G$. Then

\begin{equation}
X_{G} = \bigcup_{r\in\Z_{n}^{*}}(\delta_{r}X)_{C}.
\end{equation}

If $Y\in X_{G}$, then $Y$ has the form 

\begin{equation}
Y=(x_{r0+j},x_{r1+j},...,x_{r(n-1)+j}).
\end{equation}

From the above theorem and following the equation (\ref{Z_libre}) we have the corollary

\begin{corollary}
$\mathfrak{S}(\Z_{2}^{n},\Delta_{n}C_{n})$ is an $S$-partition.
\end{corollary}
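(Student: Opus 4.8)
The plan is to recognize $\mathfrak{S}(\Z_{2}^{n},\Delta_{n}C_{n})$ as an orbit Schur ring, exactly as the excerpt has already done for $C_{n}$ and for $\Delta_{n}$ separately. The guiding principle is that the orbit partition of any subgroup $H\le\mathrm{Aut}(\Z_{2}^{n})$ is automatically an $S$-partition; hence the whole task reduces to showing that $G=\Delta_{n}C_{n}$ is a subgroup of $\mathrm{Aut}(\Z_{2}^{n})$. First I would note that both $\Delta_{n}$ and $C_{n}$ act by permuting the $n$ coordinates of a $\Z_{2}$-sequence, and every coordinate permutation is an automorphism of $\Z_{2}^{n}$ because the group law is componentwise. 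Thus $\Delta_{n},C_{n}\le S_{n}\le\mathrm{Aut}(\Z_{2}^{n})$, and it remains only to prove closure of the product set $\Delta_{n}C_{n}$.

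The key step is the verification that $\Delta_{n}$ normalizes $C_{n}$. Writing $(\delta_{k}X)_{i}=x_{ki\Mod{n}}$ and $(C^{j}X)_{i}=x_{i+j\Mod{n}}$, a direct computation yields
\[
(\delta_{k}C^{j}X)_{i}=x_{ki+j\Mod{n}},\qquad (C^{m}\delta_{k}X)_{i}=x_{ki+km\Mod{n}},
\]
so that $\delta_{k}C^{j}=C^{m}\delta_{k}$ with $km\equiv j\Mod{n}$; since $(k,n)=1$ the congruence has the solution $m\equiv k^{-1}j$. Therefore $\delta_{k}C_{n}\delta_{k}^{-1}=C_{n}$, the set $\Delta_{n}C_{n}$ is closed under composition, and it is precisely the affine group $\{\,i\mapsto ki+j:k\in\Z_{n}^{*},\,j\in\Z_{n}\,\}$ acting on the coordinate indices, a semidirect product $C_{n}\rtimes\Delta_{n}$ sitting inside $\mathrm{Aut}(\Z_{2}^{n})$.

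With this in hand I would check the three defining properties for the orbit partition of $G$, mirroring the proof given earlier for $\mathfrak{S}(\Z_{2}^{n},\Delta_{n})$. The all-$+$ identity sequence is fixed by every coordinate permutation, so $1_{G}=1$ and condition $1$ holds. Condition $2$ is immediate, since every element of $\Z_{2}^{n}$ is an involution and hence $X_{G}^{-1}=X_{G}$. For condition $3$, applying any $\sigma\in G$ to the product of simple quantities gives $\sigma(\overline{X_{G}}\,\overline{Y_{G}})=\overline{\sigma X_{G}}\,\overline{\sigma Y_{G}}=\overline{X_{G}}\,\overline{Y_{G}}$, because $\sigma$ fixes each orbit setwise; equivalently $X_{G}Y_{G}=\bigcup_{\sigma\in G}(X\cdot\sigma Y)_{G}$. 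This shows the coefficient of any $Z$ in $\overline{X_{G}}\,\overline{Y_{G}}$ is constant on the $G$-orbit of $Z$, so the product is a nonnegative integer combination of the orbit-sums $\overline{Z_{G}}$.

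I expect the main obstacle to be the normalization identity establishing closure of $\Delta_{n}C_{n}$, i.e. confirming the semidirect-product structure $C_{n}\rtimes\Delta_{n}$; the relation $\delta_{k}C^{j}=C^{k^{-1}j}\delta_{k}$ is exactly the content that makes the orbit set well behaved. Once closure is secured, the three Schur-ring axioms follow by the same routine verification already carried out in the excerpt for $\Delta_{n}$ and $C_{n}$, and the corollary is complete.
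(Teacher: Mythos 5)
Your proposal is correct, and it reaches the corollary by a somewhat different route than the paper, though both pivot on the same key identity. The paper's proof is two lines: from the commutation relation $C^{i}\delta_{r}=\delta_{r}C^{ir}$ (the paper's (4.24), which you re-derive as $\delta_{k}C^{j}=C^{k^{-1}j}\delta_{k}$ --- the same relation read in the other direction) it deduces $\delta_{r}X_{C}=(\delta_{r}X)_{C}$, i.e.\ decimation carries circulant orbits to circulant orbits, and then it invokes the preceding theorem (that each stratum $F_{d}(\Z_{2C}^{n})$ is $\Delta_{n}$-invariant) together with the decomposition $\Z_{2C}^{n}=\bigcup_{d\mid n}F_{d}(\Z_{2C}^{n})$ of (\ref{Z_libre}) to conclude that the $\Delta_{n}C_{n}$-orbits, being the unions $X_{G}=\bigcup_{r\in\Z_{n}^{*}}(\delta_{r}X)_{C}$, form an $S$-partition. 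You instead make the group-theoretic content explicit: the normalization computation shows $\Delta_{n}C_{n}$ is the affine group $C_{n}\rtimes\Delta_{n}$ inside $S_{n}\le Aut(\Z_{2}^{n})$, and then you verify the three Schur-ring axioms directly by the classical orbit-ring (transitivity-module) argument --- the identity sequence is fixed by coordinate permutations, inversion is trivial since every element of $\Z_{2}^{n}$ is an involution, and applying $\sigma\in G$ to $\overline{X_{G}}\,\overline{Y_{G}}$ fixes the product, so coefficients are constant on $G$-orbits. Your version is more self-contained and more general: it never uses the $F_{d}$-invariance theorem or the free/non-free stratification, and the same argument proves at one stroke that $\mathfrak{S}(\Z_{2}^{n},H)$ is an $S$-partition for \emph{any} $H\le Aut(\Z_{2}^{n})$ --- a principle the paper gestures at when it calls these ``orbit Schur rings'' but never isolates as a lemma. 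What the paper's route buys instead is the finer structural information that each $G$-orbit is a union of circulant orbits of a fixed size $d$, i.e.\ that the strata $F_{d}$ are respected, which it needs anyway for the surrounding results; your proof, while cleaner as a proof of the bare corollary, does not record that refinement.
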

\begin{proof}
From the relation $C^{i}\delta_{r}=\delta_{r}C^{ir}$ it follows that 
$\delta_{r}X_{C}=(\delta_{r}X)_{C}$. Then, by using the above theorem and (\ref{Z_libre}) we
obtain to the desired result.
\end{proof}

On the other hand, we note by $RX$ the reversed sequence $RX = (x_{n-1},...,x_{1},x_{0})$. 
This permutation play an important role in the classification of $\Z_{2C}^{n}$.

\begin{definition}
Let $X_{C}\in \Z_{2C}^{n}$. We shall call $X_{C}$ symmetric if exists $Y\in X_{C}$ such that
$RY=Y$ and otherwise we say it is non symmetric. We make $Sym(\Z_{2C}^{n})$ the set of all $X_{C}$
symmetric and $\widehat{Sym}(\Z_{2C}^{n})$ the set of all $X_{C}$ nonsymmetric.
\end{definition}

Then the $S$-partition $\Z_{2C}^{n}$ can be expressed as 

\begin{equation}\label{Z_sym}
Z_{2C}^{n}=Sym(Z_{2C}^{n})\oplus\widehat{Sym}(Z_{2C}^{n}).
\end{equation}

From (\ref{Z_libre}) and (\ref{Z_sym}) it follows that

\begin{eqnarray}
F(\Z_{2C}^{n})&=&Sym(F(\Z_{2C}^{n}))\oplus\widehat{Sym}(F(\Z_{2C}^{n}))\\
\widehat{F}(\Z_{2C}^{n})&=&Sym(\widehat{F}(\Z_{2C}^{n}))\oplus\widehat{Sym}(\widehat{F}(\Z_{2C}^{n}))\\
Sym(\Z_{2C}^{n})&=&F(Sym(\Z_{2C}^{n}))\oplus\widehat{F}(Sym(\Z_{2C}^{n}))\\
\widehat{Sym}(\Z_{2C}^{n})&=&F(\widehat{Sym}(\Z_{2C}^{n}))\oplus\widehat{F}(\widehat{Sym}(\Z_{2C}^{n})).
\end{eqnarray}

Let $SF=Sym(F(\Z_{2C}^{n}))$, $\widehat{S}F=\widehat{Sym}(F(\Z_{2C}^{n}))$, 
$S\widehat{F}=Sym(\widehat{F}(\Z_{2C}^{n}))$, $\widehat{SF}=\widehat{Sym}(\widehat{F}(\Z_{2C}^{n}))$.
Then

\begin{equation}
\Z_{2C}^{n} = SF\oplus\widehat{S}F\oplus S\widehat{F}\oplus\widehat{SF}.
\end{equation}

There are three commutation relations among $\delta_{r}$, $C$ and $R$:
\begin{eqnarray}
\delta_{r}R&=&R\delta_{r}C^{r-1},\label{R_d_C}\\
RC&=&C^{-1}R,\label{R_C}\\
C^{i}\delta_{r}&=&\delta_{r}C^{ir}\label{C_d}
\end{eqnarray}

Then, there are another $S$-partitions on $\Z_{2}^{n}$, namely $\mathfrak{S}(\Z_{2}^{n},H_{n})$, 
$\mathfrak{S}(\Z_{2}^{n},H_{n}C_{n})$ and $\mathfrak{S}(\Z_{2}^{n},H_{n}\Delta_{n}C_{n})$, where 
$H_{n}=\{e,R\}$ is the reversing automorphic subgroup in $Aut(\Z_{2}^{n})$. Therefore, 
$Sym(\Z_{2C}^{n})\subset\mathfrak{S}(\Z_{2}^{n},H_{n}C_{n})$.
 
On the other hand, there is a 1-1 correspondence between set of basic sets of an $S$-partition and
$S$-sets of $S$-ring through to make
\begin{equation*}
\{T_{i_{1}},T_{i_{2}},...,T_{i_{k}}\}\mapsto\bigcup_{r=1}^{k}T_{i_{r}}.
\end{equation*}


Then, we can to see the set $SF$, $\widehat{S}F$, $S\widehat{F}$, $\widehat{SF}$, $Sym(\Z_{2C}^{n})$
and $\widehat{Sym}(\Z_{2C}^{n})$ as $S$-sets of $\mathfrak{S}(\Z_{2}^{n},C_{n})$ and we shall show
that this are invariant by decimation

\begin{theorem}\label{theo_decimation_partition}
The $S$-set $Sym(\Z_{2C}^{n})$ is invariant under the action of $\Delta_{n}$.
\end{theorem}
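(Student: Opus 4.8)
The plan is to reduce the statement to a concrete claim about a single palindromic representative and then resolve it with a parity argument. Fix $\delta_{r}\in\Delta_{n}$ and a symmetric orbit $X_{C}\in Sym(\Z_{2C}^{n})$, so that there is $Y\in X_{C}$ with $RY=Y$. Since $\delta_{r}X_{C}=(\delta_{r}X)_{C}$ (as noted in the proof of the corollary above) and $Y\in X_{C}$, we have $(\delta_{r}X)_{C}=(\delta_{r}Y)_{C}$; hence it suffices to exhibit some $Z\in(\delta_{r}Y)_{C}$ with $RZ=Z$, which is exactly the assertion that $(\delta_{r}X)_{C}$ is again symmetric.

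First I would compute $R(\delta_{r}Y)$ purely from the commutation relations (\ref{R_d_C}), (\ref{R_C}) and (\ref{C_d}). Rewriting (\ref{R_d_C}) as $R\delta_{r}=\delta_{r}RC^{1-r}$ and combining it with $RC^{k}=C^{-k}R$ from (\ref{R_C}) and with $RY=Y$, one obtains $R(\delta_{r}Y)=\delta_{r}C^{r-1}Y$; then $\delta_{r}C^{j}=C^{jr^{-1}}\delta_{r}$, a restatement of (\ref{C_d}) with $r^{-1}$ the inverse of $r$ modulo $n$, gives $R(\delta_{r}Y)=C^{m}(\delta_{r}Y)$ with $m\equiv 1-r^{-1}\pmod n$. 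Thus the reversal of $\delta_{r}Y$ already lies in its $C_{n}$-orbit, i.e.\ the orbit is closed under $R$.

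Next I would promote this to an honest fixed point. Searching for $Z=C^{s}(\delta_{r}Y)$, the identity $RZ=C^{-s}R(\delta_{r}Y)=C^{m-s}(\delta_{r}Y)$ shows that $RZ=Z$ is equivalent to $C^{m-2s}(\delta_{r}Y)=\delta_{r}Y$, that is, to solving $2s\equiv m\pmod d$, where $d=\lvert(\delta_{r}Y)_{C}\rvert$ is the orbit period; here $d\mid n$, and $d$ equals the period of $Y$ as well, since $\delta_{r}$ preserves orbit sizes by the theorem on $F_{d}(\Z_{2C}^{n})$.

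The main obstacle is precisely the solvability of $2s\equiv m\pmod d$, which forces a parity case split. If $d$ is odd, then $2$ is invertible modulo $d$ and a solution $s$ exists. If $d$ is even, then $n$ is even (as $d\mid n$), so $(r,n)=1$ forces $r$ odd; reducing $r\,r^{-1}\equiv 1\pmod n$ modulo $2$ shows $r^{-1}$ is odd as well, whence $m=1-r^{-1}$ is even and $s=m/2$ solves the congruence. In either case a reversal-fixed $Z\in(\delta_{r}Y)_{C}$ exists, so $(\delta_{r}X)_{C}\in Sym(\Z_{2C}^{n})$, and because $\delta_{r}$ is a bijection on the basic sets this yields invariance of $Sym(\Z_{2C}^{n})$ under all of $\Delta_{n}$. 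I expect the computation of $R(\delta_{r}Y)$ to be routine once the commutation relations are in hand; the genuinely delicate point is the even-period case, where invariance survives only because evenness of $d$ propagates, via the odd unit $r$, to evenness of $m$.
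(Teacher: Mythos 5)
Your proof is correct, and it is in fact more complete than the paper's own argument. The first half coincides with what the paper does: from the commutation relations (\ref{R_d_C}), (\ref{R_C}) and (\ref{C_d}) one obtains $R(\delta_{r}Y)=C^{1-r^{-1}}(\delta_{r}Y)$ for a palindromic representative $Y$, so the reversal of $\delta_{r}Y$ lies in its own $C_{n}$-orbit; the paper stops there, concluding $R(\delta_{r}X)_{C}=(\delta_{r}X)_{C}$. But the paper's definition of a symmetric orbit demands an honest fixed point $RZ=Z$ inside the orbit, and closure of an orbit under $R$ is strictly weaker in general: for even $n$ the orbit of the alternating sequence $(+,-,+,-,\dots,+,-)$ satisfies $RX=CX$, hence is $R$-closed, yet contains no palindrome. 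Your second half — reducing the existence of $Z=C^{s}(\delta_{r}Y)$ with $RZ=Z$ to the congruence $2s\equiv 1-r^{-1}\pmod{d}$, where $d=\lvert(\delta_{r}Y)_{C}\rvert$, and settling solvability by parity (if $d$ is odd, $2$ is a unit modulo $d$; if $d$ is even then $d\mid n$ forces $n$ even, so $r$ and hence $r^{-1}$ are odd and $m=1-r^{-1}$ is even, with the parity of the representative of $r^{-1}$ well defined since $n$ is even) — is exactly the step needed to pass from $R$-closure to the existence of a palindromic representative, and your reasoning there is sound. So your route proves the statement as literally defined, while the paper's proof, read against its own definition of $Sym(\Z_{2C}^{n})$, establishes only the weaker $R$-closure of the image orbit; the price of your stronger conclusion is the case split on $d$, which the alternating example shows is unavoidable.
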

\begin{proof}
From (\ref{R_d_C}), (\ref{R_C}) and (\ref{C_d}) we have $\delta_{r}R=C^{r^{-1}-1}R\delta_{r}$. Then,
we taking $X_{C}$ in $Sym(\Z_{2C}^{n})$ with $RX=X$ we have
\begin{equation*}
R(\delta_{r}X)_{C}=(R\delta_{r}X)_{C}=(C^{1-r^{-1}}\delta_{r}RX)_{C}=(C^{1-r^{-1}}\delta_{r}X)_{C}=
(\delta_{r}X)_{C}
\end{equation*}
for all $\delta_{r}\in\Delta_{n}$.
\end{proof}

\begin{corollary}\label{cor_decimation_partition}
The $S$-set $\widehat{Sym}(\Z_{2C}^{n})$ is invariant under the action of $\Delta_{n}$.
\end{corollary}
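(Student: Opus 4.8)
The plan is to deduce Corollary~\ref{cor_decimation_partition} directly from Theorem~\ref{theo_decimation_partition} together with the group structure of $\Delta_{n}$, using the fact that, by the decomposition (\ref{Z_sym}), the set $\widehat{Sym}(\Z_{2C}^{n})$ is exactly the complement of $Sym(\Z_{2C}^{n})$ inside the partition $\Z_{2C}^{n}$. So the whole task reduces to a short complement argument once we know that each decimation $\delta_{r}$ preserves the symmetric part bijectively.

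First I would record that each $\delta_{r}$ induces a bijection on the collection of circulant basic sets. From the commutation relation (\ref{C_d}) one obtains $\delta_{r}X_{C}=(\delta_{r}X)_{C}$, so $\delta_{r}$ carries a $C$-orbit to a $C$-orbit; and since $r\in\Z_{n}^{*}$ is a unit, the element $\delta_{r^{-1}}=\delta_{r}^{-1}$ again lies in $\Delta_{n}$ and supplies the inverse map. Thus $\Delta_{n}$ acts on the basic sets of $\Z_{2C}^{n}$ by permutations. Theorem~\ref{theo_decimation_partition} is precisely the statement that $\delta_{r}(Sym(\Z_{2C}^{n}))\subseteq Sym(\Z_{2C}^{n})$ for every $\delta_{r}\in\Delta_{n}$.

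Next I would upgrade this one-sided inclusion to an equality and pass to the complement. Applying the inclusion of Theorem~\ref{theo_decimation_partition} to $\delta_{r^{-1}}\in\Delta_{n}$ and then pushing forward by $\delta_{r}$ gives the reverse inclusion $Sym(\Z_{2C}^{n})\subseteq\delta_{r}(Sym(\Z_{2C}^{n}))$, hence $\delta_{r}(Sym(\Z_{2C}^{n}))=Sym(\Z_{2C}^{n})$; that is, each $\delta_{r}$ restricts to a bijection of $Sym(\Z_{2C}^{n})$ onto itself. A bijection of the full collection of basic sets that preserves $Sym(\Z_{2C}^{n})$ necessarily preserves its complement, and by (\ref{Z_sym}) that complement is exactly $\widehat{Sym}(\Z_{2C}^{n})$. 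Therefore $\delta_{r}(\widehat{Sym}(\Z_{2C}^{n}))=\widehat{Sym}(\Z_{2C}^{n})$ for all $r$, which is the claim. I do not expect a genuine obstacle here; the only point requiring care is invoking the invertibility of $\delta_{r}$ (equivalently, applying Theorem~\ref{theo_decimation_partition} also to $\delta_{r^{-1}}$) so that the one-sided inclusion becomes an equality, since a purely one-directional argument would not by itself force the complement to be preserved.
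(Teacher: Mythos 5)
Your proof is correct and follows essentially the same route as the paper: the paper's one-line proof deduces the claim from the decomposition $\Z_{2C}^{n}=Sym(\Z_{2C}^{n})\oplus\widehat{Sym}(\Z_{2C}^{n})$ of (\ref{Z_sym}) together with Theorem~\ref{theo_decimation_partition}, i.e.\ exactly the complement argument you give. The only difference is that you make explicit the invertibility step (applying the theorem to $\delta_{r^{-1}}$ to turn the inclusion into an equality), which the paper leaves implicit; this is a worthwhile detail but not a different method.
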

\begin{proof}
Follows from (\ref{Z_libre}) and (\ref{Z_sym}).
\end{proof}

\begin{corollary}
The $S$-sets $SF$, $S\widehat{F}$, $\widehat{S}F$ and $\widehat{SF}$ are invariant under the 
action of $\Delta_{n}$.
\end{corollary}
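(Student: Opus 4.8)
The plan is to recognize each of the four sets as an intersection of two subsets of $\Z_{2C}^{n}$ whose $\Delta_{n}$-invariance has already been established, and then to observe that an intersection of $\Delta_{n}$-invariant sets is again $\Delta_{n}$-invariant. Nothing genuinely new needs to be proved; the work is purely in identifying which earlier results to invoke and checking their compatibility.

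First I would record that $\Delta_{n}$ actually acts on the circulant basic sets: from the commutation relation (\ref{C_d}) one obtains $\delta_{r}X_{C}=(\delta_{r}X)_{C}$, so $\delta_{r}$ carries a circulant orbit to a circulant orbit and therefore permutes the elements of $\Z_{2C}^{n}$. Consequently a subset $\mathcal{U}\subseteq\Z_{2C}^{n}$ is $\Delta_{n}$-invariant precisely when it is a union of $\Delta_{n}$-orbits, and this property is stable under taking intersections, unions and complements.

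Next I would collect the four ingredients. By Theorem \ref{theo_decimation_partition} the set $Sym(\Z_{2C}^{n})$ is $\Delta_{n}$-invariant, and by Corollary \ref{cor_decimation_partition} so is $\widehat{Sym}(\Z_{2C}^{n})$. On the other side, the theorem asserting that $X_{C}\in F_{d}(\Z_{2C}^{n})$ implies $(\delta_{r}X)_{C}\in F_{d}(\Z_{2C}^{n})$ shows each $F_{d}(\Z_{2C}^{n})$ is $\Delta_{n}$-invariant; taking $d=n$ gives the invariance of $F(\Z_{2C}^{n})$, and forming the union over the divisors $d\mid n$ with $d<n$ as in (\ref{Z_not_libre}) gives the invariance of $\widehat{F}(\Z_{2C}^{n})$.

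Finally, since $SF=Sym(\Z_{2C}^{n})\cap F(\Z_{2C}^{n})$, $\widehat{S}F=\widehat{Sym}(\Z_{2C}^{n})\cap F(\Z_{2C}^{n})$, $S\widehat{F}=Sym(\Z_{2C}^{n})\cap\widehat{F}(\Z_{2C}^{n})$ and $\widehat{SF}=\widehat{Sym}(\Z_{2C}^{n})\cap\widehat{F}(\Z_{2C}^{n})$, each is the intersection of a $Sym$-type invariant set with an $F$-type invariant set, hence $\Delta_{n}$-invariant. Concretely, for $X_{C}\in SF$ one has $(\delta_{r}X)_{C}\in Sym(\Z_{2C}^{n})$ by Theorem \ref{theo_decimation_partition} and $(\delta_{r}X)_{C}\in F(\Z_{2C}^{n})$ by the $F_{d}$-invariance theorem, so $(\delta_{r}X)_{C}\in SF$; the remaining three cells are handled identically after substituting the appropriate ingredient. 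There is really no hard analytic step here: the only points deserving care are that $\widehat{F}(\Z_{2C}^{n})$ is invariant as a \emph{union} of the invariant pieces $F_{d}(\Z_{2C}^{n})$ rather than through any single $F_{d}$, and that the $Sym$ and the $F$ partitions are preserved \emph{simultaneously} by the same $\delta_{r}$, which is exactly what permits the four cells of their common refinement to be permuted among themselves.
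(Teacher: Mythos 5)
Your proof is correct and takes essentially the same route as the paper: the paper's own one-line proof cites the four $Sym$/$F$ decompositions of $\Z_{2C}^{n}$ together with the invariance of $Sym(\Z_{2C}^{n})$ and $\widehat{Sym}(\Z_{2C}^{n})$, with the earlier theorem on $F_{d}(\Z_{2C}^{n})$-invariance supplying the other half exactly as in your intersection argument. You merely make explicit what the paper leaves implicit, namely that each of the four cells is the intersection of a $\Delta_{n}$-invariant $Sym$-type set with a $\Delta_{n}$-invariant $F$-type set, and that intersections of invariant sets remain invariant.
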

\begin{proof}
From (4.18)-(4.21) and from theorem \ref{theo_decimation_partition} and corollary \ref{cor_decimation_partition}.
\end{proof}

Finally, we define the $S$-set antisymmetric in $\Z_{2C}^{n}$ and we show that this is invariant
under the action of $\Delta_{n}$

\begin{definition}
A set $X_{C}$ in $\Z_{2C}^{n}$ is antisymmetric if exist $Y$ in $X_{C}$ such that $RY=-Y$. We
shall denote to the antisymmetric sets in $\Z_{2C}^{n}$ with $ASym(\Z_{2C}^{n})$.
\end{definition}

As $RX_{C}=-X_{C}$ for all $X_{C}\in ASym(\Z_{2C}^{n})$, then $R(X_{C}Y_{C})=X_{C}Y_{C}$ and
$ASym^{2}(\Z_{2C}^{n})=ASym(\Z_{2C}^{n})ASym(\Z_{2C}^{n})=Sym(\Z_{2C}^{n})$.

\begin{theorem}
The $S$-set $ASym(\Z_{2C}^{n})$ is invariant under the action of $\Delta_{n}$.
\end{theorem}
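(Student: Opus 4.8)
The plan is to follow the proof of Theorem~\ref{theo_decimation_partition} almost verbatim, replacing the symmetry relation $RX=X$ by the antisymmetry relation $RX=-X$; the one genuinely new ingredient is that decimation commutes with negation. First I would reuse the commutation identity already extracted in that theorem. From (\ref{R_d_C}), (\ref{R_C}) and (\ref{C_d}) one gets $\delta_{r}R=C^{r^{-1}-1}R\delta_{r}$, hence $R\delta_{r}X=C^{1-r^{-1}}\delta_{r}RX$, so that inside an orbit the stray power of $C$ is absorbed and $(R\delta_{r}X)_{C}=(\delta_{r}RX)_{C}$. I would also record that $R$ descends to orbits, $R(Z_{C})=(RZ)_{C}$, since $RC=C^{-1}R$, and that $\delta_{r}(-X)=-\delta_{r}X$, because $\delta_{r}\in S_{n-1}$ only reindexes coordinates while negation acts entrywise.

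With these in hand, take $X_{C}\in ASym(\Z_{2C}^{n})$ together with a representative satisfying $RX=-X$, which exists by definition. Chaining the identities,
\begin{equation*}
R(\delta_{r}X)_{C}=(R\delta_{r}X)_{C}=(\delta_{r}RX)_{C}=(\delta_{r}(-X))_{C}=(-\delta_{r}X)_{C}=-(\delta_{r}X)_{C},
\end{equation*}
so at the level of orbits $(\delta_{r}X)_{C}$ satisfies the antisymmetry relation and therefore lies in $ASym(\Z_{2C}^{n})$ for every $\delta_{r}\in\Delta_{n}$.

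The step I expect to be delicate is passing from this orbit identity back to the element-level definition, namely producing an explicit $W\in(\delta_{r}X)_{C}$ with $RW=-W$: the computation only yields $R\delta_{r}X=-C^{1-r^{-1}}\delta_{r}X$, which carries a nontrivial factor $C^{1-r^{-1}}$. The remedy is a parity observation. If $X=(A,\dots,A)$ has period $d$, then $RX=-X$ forces reversal to act on the block as $RA=-A$ with $A\in\Z_{2}^{d}$, and the involution $i\mapsto d-1-i$ fixes a coordinate when $d$ is odd, which would give $a=-a$; hence $d$ must be even. Consequently $d\mid n$ makes $n$ even, every unit $r\in\Z_{n}^{*}$ is odd, $r^{-1}$ is odd, and $1-r^{-1}$ is even. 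Writing $2t\equiv 1-r^{-1}\pmod{d}$, the element $W=C^{t}\delta_{r}X$ then satisfies $RW=-C^{1-r^{-1}-t}\delta_{r}X=-C^{t}\delta_{r}X=-W$, completing the argument. This parity bookkeeping is the only substantive point beyond the symmetric case, where an analogous representative choice is available.
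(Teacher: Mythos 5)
Your proposal is correct and, in its core computation, is exactly the paper's argument: the paper proves this theorem by simply citing the symmetric case (Theorem \ref{theo_decimation_partition}), i.e.\ the chain $R(\delta_{r}X)_{C}=(C^{1-r^{-1}}\delta_{r}RX)_{C}=(-\delta_{r}X)_{C}$ that you reproduce, using $\delta_{r}R=C^{r^{-1}-1}R\delta_{r}$ and $\delta_{r}(-X)=-\delta_{r}X$. Where you go beyond the paper is your final paragraph, and it addresses a real issue: the paper stops at the orbit-level identity $R(\delta_{r}X)_{C}=-(\delta_{r}X)_{C}$, which by itself only yields $R\delta_{r}X=-C^{j}\delta_{r}X$ for some $j$, whereas the definition of $ASym(\Z_{2C}^{n})$ demands an exact representative $W$ with $RW=-W$; the two are not equivalent in general (in the symmetric analogue, the orbit of $(+,-)$ in $\Z_{2C}^{2}$ is $R$-closed, since $RX=CX$, yet contains no $R$-fixed element). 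Your parity repair is sound: a representative with $RX=-X$ and period $d$ satisfies $RA=-A$ on the block $A\in\Z_{2}^{d}$, an odd $d$ would fix the middle coordinate and force $a=-a$, so $d$ is even, hence $n$ is even, every $r\in\Z_{n}^{*}$ and its inverse are odd, $1-r^{-1}$ is even, and $t$ with $2t\equiv 1-r^{-1}\pmod{d}$ exists; since $(\delta_{r}X)_{C}$ still has orbit size $d$ (by the paper's earlier theorem that $F_{d}(\Z_{2C}^{n})$ is $\Delta_{n}$-invariant), $W=C^{t}\delta_{r}X$ indeed satisfies $RW=-W$. In short: same route as the paper, plus a correct filling of a gap that the paper leaves implicit both here and in Theorem \ref{theo_decimation_partition}.
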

\begin{proof}
Equal to proof of the theorem \ref{theo_decimation_partition}.
\end{proof}

\subsection{Periodic autocorrelation function}

Let $X=\{x_{i}\}$ and $Y=\{y_{i}\}$ be two complex-valued sequences of period $n$. The periodic
correlation of $X$ and $Y$ at shift $k$ is the product defined by:

\begin{equation}
\mathsf{P}_{X,Y}(k)=\sum\limits_{i=0}^{n-1}x_{i}\overline{y}_{i+k},\ k=0,1,...,n-1,
\end{equation}

where $\overline{a}$ denotes the complex conjugation of $a$ and $i+k$ is calculated modulo $n$.
If $Y=X$, the correlation $\mathsf{P}_{X,Y}(k)$ is denoted by $\mathsf{P}_{X}(k)$ and is the
autocorrelation of $X$. Obviously, 

\begin{eqnarray}
\mathsf{P}_{X}(k)&=&\overline{\mathsf{P}_{X}(n-k)},\\
\mathsf{P}_{RX}(k)&=&\mathsf{P}_{X}(k),\label{reversing}\\
\mathsf{P}_{-X}(k)&=&\mathsf{P}_{X}(k),\label{negative}\\
\mathsf{P}_{C^{i}X}(k)&=&\mathsf{P}_{X}(k)\label{cyclotomic_autoc},
\end{eqnarray}
for all $0\leq i\leq n-1$ and for all $X$ in $\Z_{2}^{n}$.\\

If $X$ is a $\Z_{2}$-sequence of length $n$, $\mathsf{P}_{X}(k)= 2\omega \left\{Y_{k}\right\}-n$,
where $ Y_{k}=XC^{k}X $. Also by (\ref{producto}), if $X\in \G_{n}(a)$, then

\begin{equation}
\mathsf{P}_{X}(k)=n-4a+4i_{k},
\end{equation} 

for some $0\leq i_{k}\leq a$ and $n-\mathsf{P}_{X}(k)$ is divisible by 4 for all $k$ (see [11]).\\ 

In the following propositions we show the relationship existing between the Hamming weight $\omega$
and the reversing map $R$

\begin{proposition}
Let $X,Y\in \Z_{2}^{n}$. Then
\begin{eqnarray}
\omega(X)&=&\omega(RX).\\
\omega(XRY)&=&\omega(YRX).\label{corollarySym}
\end{eqnarray}
\end{proposition}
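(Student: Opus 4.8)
The plan is to treat both identities as consequences of the fact that $R$ acts on $\Z_{2}^{n}$ by reversing the order of the coordinates; in particular $R$ permutes the entries of any sequence and, being a coordinate permutation, respects the componentwise product of the group $\Z_{2}^{n}$.

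First, for $\omega(X)=\omega(RX)$: since $RX=(x_{n-1},\dots,x_{1},x_{0})$ is obtained from $X$ by the coordinate permutation $i\mapsto n-1-i$, the multiset of entries of $RX$ coincides with that of $X$. As $\omega$ counts the number of entries equal to $+$, and a permutation of the entries leaves this count unchanged, we obtain $\omega(X)=\omega(RX)$ at once. Next, for $\omega(XRY)=\omega(YRX)$, the key observation is that $R$ is an involutive automorphism of $\Z_{2}^{n}$: it permutes coordinates, so $R(UV)=R(U)R(V)$ for the componentwise product, and $R^{2}=e$. Applying this with $U=X$, $V=RY$ and using commutativity of $\Z_{2}^{n}$ gives
\begin{equation*}
R(XRY)=R(X)\,R(RY)=RX\cdot Y=Y\cdot RX=YRX.
\end{equation*}
Combining this with the first identity applied to the sequence $XRY$ yields $\omega(XRY)=\omega(R(XRY))=\omega(YRX)$, which is the second identity.

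The only bookkeeping to be careful about is the index arithmetic witnessing $R(XRY)=YRX$; writing out the $i$-th component, $(R(XRY))_{i}=(XRY)_{n-1-i}=x_{n-1-i}\,y_{n-1-(n-1-i)}=x_{n-1-i}\,y_{i}=(YRX)_{i}$, confirms the identity directly. There is no genuine obstacle here: the substance is entirely the invariance of $\omega$ under coordinate permutations, and the remaining content is the elementary fact that $R$ is a commuting involution of the group. One could alternatively establish the second identity by the direct reindexing $j=n-1-i$ in the sum defining $\omega(YRX)$, but routing it through $R(XRY)=YRX$ together with the first identity is cleaner and avoids repeating the same substitution twice.
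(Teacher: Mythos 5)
Your proof is correct and follows essentially the same route as the paper: the paper likewise observes that $X$ and $RX$ share the same Hamming weight and that $R(XRY)=YRX$, from which both identities follow. You have merely filled in the details the paper leaves implicit (that $R$ is a multiplicative involution, verified by the index computation), which is fine.
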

\begin{proof}
Clearly $X$ and $RX$ have the same Hamming weight. As $R(XRY)=YRX$ the statement follows.
\end{proof}

\begin{proposition}
\begin{enumerate}
\item Let $X\in \Z_{2}^{n}$. If $n$ is an odd number, then
\begin{equation}
\omega(XRX)=1 + 2\omega(BRD), 
\end{equation}
with $X=(B,x,D)$, $B,D\in \Z_{2}^{(n-1)/2}$.
\item If $n$ is an even number, then
\begin{equation}
\omega(XRX)=2\omega(BRD),
\end{equation}
with $X=(B,D)$, $B,D\in \Z_{2}^{n/2}$.
\end{enumerate}
\end{proposition}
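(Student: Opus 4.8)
The plan is to exploit that the group operation on $\Z_{2}^{n}$ is componentwise multiplication, so the product $XRX$ can be read off coordinate by coordinate. Writing $X=(x_{0},\dots,x_{n-1})$ and recalling $RX=(x_{n-1},\dots,x_{1},x_{0})$, the $i$-th coordinate of $RX$ is $x_{n-1-i}$, hence
\begin{equation*}
(XRX)_{i}=x_{i}\,x_{n-1-i},\qquad 0\leq i\leq n-1.
\end{equation*}
Since $x_{i},x_{n-1-i}\in\{+,-\}$, this coordinate equals $+$ exactly when $x_{i}=x_{n-1-i}$. Because $\omega$ counts the number of $+$ entries, the whole problem reduces to counting the indices $i$ for which $x_{i}=x_{n-1-i}$.

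First I would analyze the involution $i\mapsto n-1-i$ on $\{0,\dots,n-1\}$. The two coordinates $i$ and $n-1-i$ always carry the same product $x_{i}x_{n-1-i}$, so a matching pair (one with $x_{i}=x_{n-1-i}$) contributes $2$ to $\omega(XRX)$, while a non-matching pair contributes $0$. The only subtlety is the fixed point of the involution: when $n$ is odd there is exactly one, namely $i=(n-1)/2$, and there $x_{i}x_{n-1-i}=x_{i}^{2}=+$ always contributes $1$; when $n$ is even the involution is fixed-point free, so every index lies in a genuine pair.

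To finish I would identify the count of matching pairs with $\omega(BRD)$. In the odd case, write $X=(B,x,D)$ with $B,D\in\Z_{2}^{(n-1)/2}$; a direct check of the reversal gives $(RD)_{j}=x_{n-1-j}$ and hence $(BRD)_{j}=x_{j}x_{n-1-j}$ for $0\leq j\leq (n-3)/2$, so $\omega(BRD)$ is precisely the number of matching pairs in the first half. Adding the forced contribution $1$ from the center yields $\omega(XRX)=1+2\omega(BRD)$. The even case is identical but with $X=(B,D)$, $B,D\in\Z_{2}^{n/2}$, no center term, and $\omega(XRX)=2\omega(BRD)$. I expect the only place requiring care to be this last index bookkeeping — verifying that reversing the block $D$ lines its entries up with the tail of $X$ so that $(BRD)_{j}=x_{j}x_{n-1-j}$; once that alignment is confirmed, both formulas drop out immediately.
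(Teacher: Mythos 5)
Your proof is correct and is in essence the same argument as the paper's: the paper writes $X=(B,x,D)$ (resp.\ $X=(B,D)$ for even $n$), observes the block identity $XRX=(BRD,+,DRB)$, and relies on its preceding proposition $\omega(XRY)=\omega(YRX)$ to see that the two halves contribute equally — exactly the pair-matching count you perform coordinatewise via the involution $i\mapsto n-1-i$. Your version is a more explicit, self-contained rendering of the same decomposition: your alignment check $(RD)_{j}=x_{n-1-j}$, hence $(BRD)_{j}=x_{j}x_{n-1-j}$, is valid in both parity cases, and it re-derives (rather than cites) the equal-weight fact for the two halves, so no gap remains.
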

\begin{proof}
Suppose $n$ an odd number. As $XRX$ is symmetric, then $X=(B,x,D)$, $x\in\{\pm\}$ and 
$B,D\in \Z_{2}^{(n-1)/2}$ and $XRX=(BRD,+,DRB)$. First statement is followed. The equation second is 
proved in way similar.
\end{proof}

On the other hand, if $X\in \G_{n}(a)$ and $Y\in\G_{n}(b)$, then

\begin{eqnarray}\label{ecplano}
\sum\limits_{j=0}^{n-1}\mathsf{P}_{X,Y}(k)&=&\sum\limits_{k=0}^{n-1}\sum\limits_{i=0}^{n-1}x_{i}y_{i+k}\nonumber \\
&=&\sum\limits_{i=0}^{n-1}\sum\limits_{k=0}^{n-1}x_{i}y_{i+k}\nonumber \\
&=&\sum\limits_{i=0}^{n-1}x_{i}\sum\limits_{k=0}^{n-1}y_{i+k}\nonumber \\
&=&(2a-n)(2b-n).
\end{eqnarray}

Now, let 
$$(\mathsf{P}_{X}(0),\mathsf{P}_{X}(1),...,\mathsf{P}_{X}(n-1))$$
denote the autocorrelation vector of $X_{C}$ in $\Z_{2C}^{n}$ and let $\mathfrak{A}(\Z_{2C}^{n})$ denote the set of all this. Let $X_{1}+X_{2}+\cdots +X_{n}=d$ denote the plane in $\Z^{n}$ in the 
indeterminates $X_{i}$, $i=1,2,...,n$ and let 
$\theta:\Z_{2C}^{n}\rightarrow\mathfrak{A}(\Z_{2C}^{n})$ be the map defined by 
$\theta(X_{C})=(\mathsf{P}_{X}(0),\mathsf{P}_{X}(1),\dots ,\mathsf{P}_{X}(n-1))$, where $\theta$
is defined by a representative of $X_{C}$. Therefore by (\ref{cyclotomic_autoc}), $\theta$ is 
well-defined. Then from (\ref{ecplano}), $\theta$ sends the plane $\G_{n}(a)$ in the plane 
$X_{1}+X_{2}+\cdots +X_{n}=(2a-n)^{2}$.

On the other hand, the decimation group $\Delta_{n}$ do not alter the set of values which 
$\mathsf{P}_{X}(k)$ takes on, but merely the order in which they appear. Below we prove what 
was stated above.

Let $\delta_{r}X = Y = (y_{0},y_{1},...,y_{n-1})$. Then
\begin{equation}\label{auto_decimated}
\mathsf{P}_{Y}(i)= \sum_{k=0}^{n-1}y_{k}\overline{y_{k+i}} = \sum_{k=0}^{n-1}x_{rk}\overline{x_{r(k-i)}} = \sum_{rk=0}^{n-1}x_{rk}\overline{x_{rk + ri}} = \mathsf{P}_{X}(ri).
\end{equation}
Hence $XC^{i}X\longrightarrow XC^{ri}X$ and $\delta_{r}$ is a permutation over $\theta(X_{C})$.

From above we have the commutative diagram
\begin{equation}\label{diagram}
\xymatrix{
 \Z_{2C}^{n} \ar[d]^{\theta} \ar[r]^{\delta_{r}} & \Z_{2C}^{n} \ar[d]^{\theta}\\
   \mathfrak{A}(\Z_{2C}^{n}) \ar[r]^{\delta_{r}} & \mathfrak{A}(\Z_{2C}^{n}) 
}
\end{equation}
and $\theta \circ \delta_{r} = \delta_{r}\circ \theta.$\\
Let $\Delta_{n}(\theta(X_{C}))$ denote the set
\begin{equation}
\{\delta_{r}(\theta (X_{C})):\delta_{r}\in \Delta_{n} \}.
\end{equation}
Then $\theta$ is a mapping of equivalence class, thus $\theta: \Delta_{n}(X_{C})\rightarrow \Delta_{n}(\theta(X_{C}))$. Now we show that in general it is hold that

\begin{proposition}
Let $X_{C}\in\Z_{2C}^{n}$. Then
\begin{equation}
\theta(X_{C})=\theta((-X)_{C})=\theta((RX)_{C})
\end{equation}
\end{proposition}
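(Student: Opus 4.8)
The plan is to reduce the statement to a coordinatewise check. Recall that $\theta$ assigns to an orbit $X_{C}$ the autocorrelation vector $(\mathsf{P}_{X}(0),\dots,\mathsf{P}_{X}(n-1))$, and that this assignment is already known to be well-defined on orbits: by (\ref{cyclotomic_autoc}) the vector does not depend on which representative of $X_{C}$ is chosen. Consequently two elements of $\mathfrak{A}(\Z_{2C}^{n})$ coincide exactly when all $n$ of their coordinates agree, so it suffices to compare the autocorrelation functions of $X$, $-X$ and $RX$ at each shift $k$.

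First I would handle the negation. Choosing $-X$ as a representative of $(-X)_{C}$, its $k$-th coordinate is $\mathsf{P}_{-X}(k)$, which equals $\mathsf{P}_{X}(k)$ by (\ref{negative}); if one prefers a direct argument, the entries of a $\Z_{2}$-sequence are real and $(-x_{i})\overline{(-x_{i+k})}=x_{i}\overline{x}_{i+k}$, so the defining sum is unchanged. Since this holds for every $0\le k\le n-1$, we obtain $\theta((-X)_{C})=\theta(X_{C})$. The reversal is treated in the same spirit: taking $RX=(x_{n-1},\dots,x_{1},x_{0})$ as a representative of $(RX)_{C}$, its $k$-th coordinate is $\mathsf{P}_{RX}(k)$, equal to $\mathsf{P}_{X}(k)$ by (\ref{reversing}). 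Directly, writing $(RX)_{i}=x_{n-1-i}$ and substituting $j=n-1-i$ (indices mod $n$, and using that the entries are real) reindexes the sum as $\mathsf{P}_{RX}(k)=\sum_{i}x_{n-1-i}x_{n-1-i-k}=\sum_{j}x_{j}x_{j-k}=\mathsf{P}_{X}(k)$. Comparing coordinate by coordinate yields $\theta((RX)_{C})=\theta(X_{C})$.

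There is no genuine obstacle here: the proposition is essentially the coordinatewise restatement of the already-recorded invariances (\ref{negative}) and (\ref{reversing}), combined with the well-definedness of $\theta$ on $C_{n}$-orbits. The only point requiring a moment's care is that $\theta$ is defined through a representative, so one must confirm that evaluating on the representatives $-X$ and $RX$ is legitimate and independent of the particular element of the orbit picked; this is precisely the content of (\ref{cyclotomic_autoc}). With that in hand, both identities follow immediately and the chain $\theta(X_{C})=\theta((-X)_{C})=\theta((RX)_{C})$ is established.
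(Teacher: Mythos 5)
Your argument is correct and matches the paper's proof, which likewise derives the proposition directly from the invariances (\ref{reversing}) and (\ref{negative}); your additional remarks on well-definedness via (\ref{cyclotomic_autoc}) and the explicit reindexing merely spell out details the paper leaves implicit.
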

\begin{proof}
Is followed from (\ref{reversing}) and (\ref{negative}).
\end{proof}

Such as we shall see in the section following there are $X_{C}$ in $\Z_{2C}^{n}$ such that
$\theta(X_{C})=(n,a,a,...,a)$. These $X_{C}$ hold that $\theta(\delta_{r}X_{C})=\theta(X_{C})$ for
all $\delta_{r}$ in $\Delta_{n}$. Then there is $Y$ in $X_{C}$ such that $\delta_{r}Y=Y$ for
some $\delta_{r}\in\Delta_{n}$. Thus $Y$ is fixed by $\delta_{r}$. Now, we shall define the 
$S$-set of the all $X_{C}$ that are fixed by $\delta_{r}$.

\begin{definition}
Take $X_{C}\in\Z_{2C}^{n}$. The orbit $X_{C}$ is $\delta_{r}$-invariant if exist $Y$ in $X_{C}$ 
such that $\delta_{r}Y=Y$ for some $\delta_{r}$ in $\Delta_{n}$ and let $\mathbb{I}_{nC}(r)$ 
denote the $S$-set of $\delta_{r}$-invariant orbits of $\Z_{2C}^{n}$. 
\end{definition}

\begin{theorem}
$\Delta_{n}$ defines a partition on $\mathbb{I}_{nC}(r)$.
\end{theorem}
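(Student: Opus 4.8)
The plan is to show that the subset $\mathbb{I}_{nC}(r)\subseteq\Z_{2C}^{n}$ is invariant under the action of $\Delta_{n}$; once this is established, the orbits of $\Delta_{n}$ acting on $\mathbb{I}_{nC}(r)$ automatically form a partition, which is exactly the assertion. Recall that $\Delta_{n}$ already acts on the whole $S$-partition $\Z_{2C}^{n}$ through $\delta_{s}X_{C}=(\delta_{s}X)_{C}$, a well-defined action coming from the commutation relation $C^{i}\delta_{s}=\delta_{s}C^{is}$ of (\ref{C_d}). So the only genuine content is closure: one must check that $\delta_{s}X_{C}\in\mathbb{I}_{nC}(r)$ whenever $X_{C}\in\mathbb{I}_{nC}(r)$, for every $\delta_{s}\in\Delta_{n}$.

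First I would fix $X_{C}\in\mathbb{I}_{nC}(r)$ and pick, by definition, a representative $Y\in X_{C}$ with $\delta_{r}Y=Y$. For an arbitrary $\delta_{s}\in\Delta_{n}$ the natural candidate for a fixed representative of the image orbit is $Y'=\delta_{s}Y$. I would first verify that $Y'$ indeed lies in $(\delta_{s}X)_{C}=\delta_{s}X_{C}$: writing $Y=C^{i}X$ and using $\delta_{s}C^{i}=C^{is^{-1}}\delta_{s}$ (a rearrangement of (\ref{C_d})) gives $\delta_{s}Y=C^{is^{-1}}\delta_{s}X$, so $Y'$ is a cyclic shift of $\delta_{s}X$ and hence a member of the orbit $(\delta_{s}X)_{C}$.

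The crucial step is to show $\delta_{r}Y'=Y'$. Here I would invoke the fact that $\Delta_{n}\cong\Z_{n}^{*}$ is abelian, so $\delta_{r}\delta_{s}=\delta_{s}\delta_{r}$. Then
\begin{equation*}
\delta_{r}Y'=\delta_{r}\delta_{s}Y=\delta_{s}\delta_{r}Y=\delta_{s}Y=Y',
\end{equation*}
so $Y'$ is a representative of $\delta_{s}X_{C}$ fixed by $\delta_{r}$, whence $\delta_{s}X_{C}\in\mathbb{I}_{nC}(r)$. This proves that $\mathbb{I}_{nC}(r)$ is $\Delta_{n}$-invariant.

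Having shown invariance, the conclusion is immediate: $\Delta_{n}$ restricts to an action on $\mathbb{I}_{nC}(r)$, and the orbits of a group action always partition the set on which it acts. I do not expect any serious obstacle; the one point that must not be glossed over is that transporting the fixed-point relation from $Y$ to $\delta_{s}Y$ relies on the commutativity of $\Delta_{n}$ (not on any property of the cyclic part $C_{n}$), together with the already-established well-definedness of $\delta_{s}$ on orbits. These two facts are precisely what allow the single fixed-point witness to survive under every decimation.
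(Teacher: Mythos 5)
Your proof is correct and takes essentially the same approach as the paper: both show that $\mathbb{I}_{nC}(r)$ is closed under every $\delta_{s}\in\Delta_{n}$, using the relation $C^{i}\delta_{s}=\delta_{s}C^{si}$ to make the action well defined on orbits, and then let the $\Delta_{n}$-orbits supply the partition. The only difference is one of explicitness: you spell out the commutativity $\delta_{r}\delta_{s}=\delta_{s}\delta_{r}$ of $\Delta_{n}\cong\Z_{n}^{*}$ that transports the fixed representative $Y$ to $\delta_{s}Y$, a step the paper's terser proof leaves implicit.
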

\begin{proof}
From previous section we know that $\Delta_{n}$ defines a partition on $\Z_{2C}^{n}$. Therefore 
the statement is true for $r=1$. Now, take $X_{C}$ in $\mathbb{I}_{nC}(r)$, $r\neq1$, and suppose 
$\delta_{r}X=X$. It is enough with to take a $\delta_{s}\neq\delta_{r}$ in $\Delta_{n}$, with 
$s\neq1$. From the relation $C^{i}\delta_{s}=\delta_{s}C^{si}$ it is follows that 
$\delta_{r}(\delta_{s}X)_{C}=(\delta_{s}X)_{C}$. Then $(\delta_{s}X)_{C}\in\mathbb{I}_{nC}(r)$.
\end{proof}

Then, from previous theorem is followed that

\begin{corollary}
$\mathbb{I}_{n}(r)$ is an $S$-subgroup of $\mathfrak{S}(\Z_{2}^{n},\Delta_{n})$ for each $\delta_{r}$
in $\Delta_{n}$.
\end{corollary}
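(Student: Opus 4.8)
The goal is to verify the two defining properties of an $S$-subgroup: that $\mathbb{I}_n(r)$ is a union of basic sets of $\mathfrak{S}(\Z_2^n,\Delta_n)$ (hence an $S$-set), and that it is a subgroup of $\Z_2^n$. The plan is to first identify $\mathbb{I}_n(r)$ explicitly with the fixed set $\mathrm{Fix}(\delta_r)=\{X\in\Z_2^n:\delta_rX=X\}$. Since $\mathbb{I}_n(r)$ is the union of those $\Delta_n$-orbits that contain a $\delta_r$-fixed point, I would first show that any such orbit lies entirely inside $\mathrm{Fix}(\delta_r)$: if $\delta_rY=Y$ and $\delta_s\in\Delta_n$, then because $\Delta_n\cong\Z_n^{*}$ is abelian we have $\delta_r(\delta_sY)=\delta_s(\delta_rY)=\delta_sY$, so every translate $\delta_sY$ is again fixed. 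Thus the union collapses to $\mathbb{I}_n(r)=\mathrm{Fix}(\delta_r)$, which is the $\Delta_n$-orbit analogue of the commutation argument proving the previous theorem.

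With this identification the $S$-set property is immediate: the same computation shows $\mathrm{Fix}(\delta_r)$ is $\Delta_n$-invariant, hence a union of $\Delta_n$-orbits, i.e.\ an $S$-set of $\mathfrak{S}(\Z_2^n,\Delta_n)$. For the subgroup property I would use that $\delta_r$ is a coordinate permutation, $\delta_r\in S_{n-1}\le Aut(\Z_2^n)$, and therefore an automorphism of the group $\Z_2^n$ under the componentwise product. The fixed set of a group automorphism is always a subgroup: the identity $1=(+,\dots,+)$ is fixed, and if $\delta_rX=X$ and $\delta_rY=Y$ then $\delta_r(XY)=\delta_r(X)\,\delta_r(Y)=XY$, with inverses automatic since every element of $\Z_2^n$ is its own inverse. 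Concretely, $\mathrm{Fix}(\delta_r)$ consists of the sequences constant on each cycle of the permutation $i\mapsto ri\ (\mathrm{mod}\ n)$, and the componentwise product of two such sequences is again constant on these cycles. Combining the two parts gives that $\mathbb{I}_n(r)=\mathrm{Fix}(\delta_r)$ is an $S$-subgroup.

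The point requiring the most care is the identification $\mathbb{I}_n(r)=\mathrm{Fix}(\delta_r)$, because the corresponding circulant object $\mathbb{I}_{nC}(r)=C_n\cdot\mathrm{Fix}(\delta_r)$ of the previous theorem is in general strictly larger and need not be a subgroup at all: the shift obeys $C^i\delta_r=\delta_rC^{ir}$ rather than commuting with $\delta_r$, so a circulant orbit meeting $\mathrm{Fix}(\delta_r)$ typically leaves it. What rescues the corollary is precisely that $\Delta_n$ is abelian, forcing every $\delta_r$-invariant $\Delta_n$-orbit to sit inside $\mathrm{Fix}(\delta_r)$; isolating this commutativity argument is the crux, and once it is in place both closure under the group product and the $\Delta_n$-invariance follow at once.
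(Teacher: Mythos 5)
Your proof is correct, and it is actually more complete than the paper's own treatment, which gives this corollary no real proof: the paper merely says it follows from the preceding theorem (that $\Delta_{n}$ defines a partition on $\mathbb{I}_{nC}(r)$), never defines $\mathbb{I}_{n}(r)$ as distinct from $\mathbb{I}_{nC}(r)$, and never verifies the closure property that the word ``$S$-subgroup'' demands. You supply exactly the two missing pieces. First, the identification $\mathbb{I}_{n}(r)=\mathrm{Fix}(\delta_{r})$ via commutativity of $\Delta_{n}\cong\Z_{n}^{*}$ (if $\delta_{r}Y=Y$ then $\delta_{r}\delta_{s}Y=\delta_{s}\delta_{r}Y=\delta_{s}Y$), which collapses every $\delta_{r}$-invariant $\Delta_{n}$-orbit into the fixed set and simultaneously shows that set is a union of $\Delta_{n}$-orbits, i.e.\ an $S$-set of $\mathfrak{S}(\Z_{2}^{n},\Delta_{n})$; the paper's theorem plays the analogous role at the circulant level using $C^{i}\delta_{s}=\delta_{s}C^{si}$, so the crux (commutativity within $\Delta_{n}$) is shared, but the paper's version only yields $\Delta_{n}$-invariance, i.e.\ the $S$-set half. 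Second, the subgroup half: your observation that $\delta_{r}$ is a coordinate permutation, hence an automorphism of $\Z_{2}^{n}$, whose fixed set is automatically a subgroup, is the step the paper omits entirely. Your caveat that the circulant object $\mathbb{I}_{nC}(r)=C_{n}\cdot\mathrm{Fix}(\delta_{r})$ need not be a subgroup is also right and can be made concrete: for $n=7$, $r=2$, the sequences $X=(+,+,+,-,+,-,-)$ and $Y=(+,-,-,+,-,+,+)$ are both $\delta_{2}$-fixed, yet the product $(CX)Y=(+,-,+,+,+,-,+)$ has no cyclic shift fixed by $\delta_{2}$, so $\mathbb{I}_{7C}(2)$ fails closure. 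Thus the corollary is true only under your reading of $\mathbb{I}_{n}(r)$ as the $\delta_{r}$-fixed subgroup (equivalently, the union of the $\delta_{r}$-invariant $\Delta_{n}$-orbits), and your argument both pins down that reading and proves the statement, where the paper's citation of its partition theorem alone does not.
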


\section{Hadamard Matrices}
A Hadamard matrix $H$ is a $n$ by $n$ matrix all of whose entries are $+1$ or 
$-1$ which satisfies $HH^{t}=nI_{n}$, where $H^{t}$ is the transpose of $H$
and $I_{n}$ is the unit matrix of order $n$. It is also known that, if a
Hadamard matrix of order $n>1$ exists, $n$ must have the value $2$ or be
divisible by 4. It has been conjecture that this condition also insures the
existence of a Hadamard matrix.

Two Hadamard matrices $H$ and $H^{\prime}$ are equivalents if one can be obtained from the other
by perfoming a finite sequence of the following operations:
\begin{enumerate}
\item permute the rows or the columns,
\item multiply a row or a column by $-1$.
\end{enumerate}

A important result in this paper is to prove that if a Hadamard matrix exists then this or its 
equivalente matrix must be contained in a complete maximal $S$-set. 

\begin{theorem}
If $H$ is a Hadamard matrix, then this or its equivalent matrix $H^{\prime}$ there exist either in
$\E_{4n}(2n)$ or in $\mathcal{O}_{4n}(2n)$. 
\end{theorem}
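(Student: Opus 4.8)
The claim is that every Hadamard matrix $H$ of order $4n$, up to Hadamard equivalence, can be realized so that all of its rows lie inside one of the two complete even maximal $S$-sets $\E_{4n}(2n)$ or $\Odd_{4n}(2n)$ of $\Z_{2}^{4n}$. The plan is to read off what these two $S$-sets actually contain in terms of Hamming weight. By the earlier classification, $\E_{4n}(2n)$ consists of the basic sets $\G_{4n}(b)$ with $b$ even and $b\in\left[\frac{4n-2n}{2},\frac{4n+2n}{2}\right]=[n,3n]$, i.e. all even-weight rows whose weight lies in the central band $[n,3n]$; likewise $\Odd_{4n}(2n)$ consists of the $\G_{4n}(b)$ with $b$ odd and $b\in[n,3n]$. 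So the entire task reduces to: after normalizing $H$ by equivalence, every row has Hamming weight in $[n,3n]$, and moreover all rows share the parity of their weight.

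**The main normalization step.**

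First I would use column sign changes (operation 2) to make the first row the all-$+$ vector, which has weight $4n$. Then I would exploit orthogonality: any two distinct rows of a Hadamard matrix of order $4n$ agree in exactly $2n$ positions and disagree in exactly $2n$ positions. In particular, once the first row is all-$+$, every other row has exactly $2n$ entries equal to $+$, hence Hamming weight exactly $2n=2n$, which sits squarely in the middle of $[n,3n]$ and is even. The genuinely delicate point is the first row itself, whose weight is $4n$ and whose parity is even but whose weight $4n$ is \emph{not} in $[n,3n]$ for $n\geq 2$. So a naive normalization places $4n-1$ rows correctly but leaves the all-$+$ row outside the band. The plan is therefore to \emph{not} normalize the first row to all-$+$, but instead to choose the column sign changes so that all rows simultaneously land in the central band; equivalently, I would argue that the mutual-agreement condition (every pair of rows agrees in exactly $2n$ positions) already forces, after a suitable single global adjustment, each row into weight range $[n,3n]$ with a common parity.

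**Forcing the band and the common parity.**

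Concretely, the cleanest route is: pick any row $r_0$ and multiply every column in which $r_0$ has a $-$ by $-1$, turning $r_0$ into all-$+$. Now every \emph{other} row has weight exactly $2n$. To fix $r_0$ itself, I would instead apply the observation symmetrically — replace the chosen base row by a generic row and show that one cannot avoid having one distinguished row of weight $4n$; the resolution is to use a row negation (operation 2 applied to a row) or to re-examine which of $\E_{4n}(2n)$, $\Odd_{4n}(2n)$ the matrix falls into. Since $\mathsf{P}_{-X}(k)=\mathsf{P}_X(k)$ and weight behaves as $\omega(-X)=4n-\omega(X)$, negating the all-$+$ row sends weight $4n\mapsto 0$, still outside the band. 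This tells me the correct statement must tolerate the two extreme rows, so I expect the argument to show that \emph{after} normalizing, all $4n$ rows have even weight (forcing the $\E_{4n}(2n)$ case) with $4n-1$ of them of weight $2n$ and, by a parity/counting argument on the remaining degrees of freedom, the matrix as a whole is equivalent to one whose rows all lie in a single complete even maximal $S$-set.

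**Main obstacle.**

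The hard part will be handling the extremal rows (weights $0$ or $4n$) that fall outside the central band $[n,3n]$: the theorem as stated requires \emph{every} row to lie in the $S$-set, so the proof must show that Hadamard equivalence can always redistribute the weights into $[n,3n]$ with uniform parity, rather than leaving one all-$+$ or all-$-$ row stranded. I would resolve this by combining row and column sign changes (not just the single column normalization) and using the exact-agreement count $2n$ between every pair of distinct rows to pin the parity and the band simultaneously; establishing that a global choice of signs achieving this always exists — essentially a consistency/parity argument over $\Z_2^{4n}$ — is where the real content lies.
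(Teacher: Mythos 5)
Your reduction of the theorem to a weight statement---every Hadamard matrix is equivalent to one whose rows all have Hamming weight in $[n,3n]$ with a common parity---is correct, and it matches exactly what $\E_{4n}(2n)$ and $\mathcal{O}_{4n}(2n)$ contain; you also correctly diagnose that the naive normalization (first row all $+$) strands one row at weight $4n$, outside the band. But your proposal then stops at precisely the decisive step: you never produce, nor prove the existence of, a column-sign vector $\epsilon$ for which every row $H_i\epsilon$ lands in $[n,3n]$; you say yourself that this existence ``is where the real content lies.'' A declared intention to find such an $\epsilon$ is not a proof, so as written there is a genuine gap. Two secondary points: your intermediate expectation that after normalization $4n-1$ rows have weight exactly $2n$ is an artifact of the all-$+$ normalization you had already discarded and should not be the target; and the parity half of your goal needs no ``consistency argument'' at all, since $\omega(X)+\omega(Y)\equiv\omega(XY)\pmod 2$ and $\omega(H_iH_j)=2n$ for $i\neq j$, so after \emph{any} column normalization all rows automatically share the parity of their weight.

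The missing step can be closed with one more use of the orthogonality count, and doing so essentially recovers the paper's argument. If two $\pm$-rows of length $4n$ have weights $b$ and $b'$, the number of positions where they agree ranges (in steps of $2$) over the interval from $\lvert 4n-b-b'\rvert$ to $4n-\lvert b-b'\rvert$; orthogonality (exactly $2n$ agreements) therefore forces $2n\le b+b'\le 6n$ \emph{and} $\lvert b-b'\rvert\le 2n$. Now choose $\epsilon$ so that the first row acquires weight exactly $n$ (let $\epsilon$ agree with $H_1$ in exactly $n$ columns); since column negations preserve orthogonality, the two inequalities with $b=n$ give $b'\in[n,3n]$ for every other row, and the parity remark finishes: all rows lie in basic sets $\G_{4n}(b)$ with $b\in[n,3n]$ of one parity, i.e.\ in $\E_{4n}(2n)$ or $\mathcal{O}_{4n}(2n)$. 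This is the same route the paper takes---observe $H_iH_j\in\G_{4n}(2n)$ for $i\neq j$, move the first row into a basic set of the complete $S$-set by column sign changes, then invoke $\G_{4n}(2n)$-completeness---though the paper is itself terse at the same spot: normalizing the first row to a generic in-band weight (say $2n$) imposes no constraint whatever on the other rows, so the choice of the extreme weight $n$ (or an equivalent supplementary argument) is genuinely needed. Your plan, completed this way, coincides with the paper's approach rather than providing an alternative one.
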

\begin{proof}
Let $H_{i},H_{j}$ be rows vector in $H$. As $(H_{i},H_{j})=0$, then $H_{i}H_{j}\in\G_{4n}(2n)$
if $i\neq j$ and $H_{i}H_{j}\in\G_{4n}(4n)$ if $i=j$. If $H$ is contained either in 
$\E_{4n}(2n)$ or in $\mathcal{O}_{4n}(2n)$, then nothing should be proved.
Therefore suppose that $H$ is not contained in some $\G_{4n}(2n)$-complete $S$-set. By multiplying
the columns of $H$ by appropiate signs we can obtain an equivalent matrix $H^{\prime}$ whose firt 
row be a $\Z_{2}$-sequence in some basic set of either $\E_{4n}(2n)$ or 
$\mathcal{O}_{4n}(2n)$. Then, by definition of $\G_{4n}(2n)$-complete $S$-set all row 
of $H^{\prime}$ belongs to either $\E_{4n}(2n)$ or $\mathcal{O}_{4n}(2n)$.
\end{proof}

Next,we shall show an example illustrating this

\begin{example}
Let
\begingroup\makeatletter\def\f@size{5}\check@mathfonts
\begin{equation*}
H=\left(
\begin{array}{cccccccccccc}
+&-&-&-&-&-&-&-&-&-&-&-\\
+&+&-&+&-&-&-&+&+&+&-&+\\
+&+&+&-&+&-&-&-&+&+&+&-\\
+&-&+&+&-&+&-&-&-&+&+&+\\
+&+&-&+&+&-&+&-&-&-&+&+\\
+&+&+&-&+&+&-&+&-&-&-&+\\
+&+&+&+&-&+&+&-&+&-&-&-\\
+&-&+&+&+&-&+&+&-&+&-&-\\
+&-&-&+&+&+&-&+&+&-&+&-\\
+&-&-&-&+&+&+&-&+&+&-&+\\
+&+&-&-&-&+&+&+&-&+&+&-\\
+&-&+&-&-&-&+&+&+&-&+&+
\end{array}
\right)
\end{equation*}
\endgroup
a Hadamard matrix in $\G_{12}(1)\cup\G_{12}(7)$. Then the equivalent matrix 
\begingroup\makeatletter\def\f@size{5}\check@mathfonts
\begin{equation*}
H^{\prime}=\left(
\begin{array}{cccccccccccc}
+&-&+&+&-&-&-&+&-&+&-&+\\
+&+&+&-&-&-&-&-&+&-&-&-\\
+&+&-&+&+&-&-&+&+&-&+&+\\
+&-&-&-&-&+&-&+&-&-&+&-\\
+&+&+&-&+&-&+&+&-&+&+&-\\
+&+&-&+&+&+&-&-&-&+&-&-\\
+&+&-&-&-&+&+&+&+&+&-&+\\
+&-&-&-&+&-&+&-&-&-&-&+\\
+&-&+&-&+&+&-&-&+&+&+&+\\
+&-&+&+&+&+&+&+&+&-&-&-\\
+&+&+&+&-&+&+&-&-&-&+&+\\
+&-&-&+&-&-&+&-&+&+&+&-
\end{array}
\right)
\end{equation*}
\endgroup
is contained in $\E_{12}(6)=\{\G_{12}(4),\G_{12}(6),\G_{12}(8)\}$.
\end{example}

In the following sections only two types of Hadamard matrices are studied: circulant and with one
core.

\subsection{Circulant Hadamard matrices}

A circulant Hadamard matrix of order $n$ is a square matrix of the form 
\begin{equation}
H =
\left(\begin{array}{cccc}
	a_{1} & a_{2} & \cdots & a_{n}\\
	a_{n} & a_{1} & \cdots & a_{n-1}\\
	\cdots & \cdots & \cdots & \cdots\\
	a_{2} & a_{3} & \cdots & a_{1}
\end{array}\right)
\end{equation}

No circulant Hadamard matrix of order larger than 4 has ever been found. Then we have the following

\begin{conjecture}
No circulant Hadamard matrix of order larger than 4 exists.
\end{conjecture}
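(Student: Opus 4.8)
The plan is to translate the matrix condition into the autocorrelation language already developed and then exploit decimation invariance to constrain the admissible orders. A circulant matrix $H$ of order $n$ with first row $X=(x_0,\dots,x_{n-1})\in\Z_2^n$ satisfies $HH^t=nI_n$ if and only if $\mathsf{P}_X(k)=0$ for every $k\in\{1,\dots,n-1\}$; equivalently, writing $Y_k=XC^kX$, we need $\omega(Y_k)=n/2$ for all such $k$, so $Y_k\in\G_n(n/2)$ while $Y_0\in\G_n(n)$. In the notation of the autocorrelation map this says exactly $\theta(X_C)=(n,0,0,\dots,0)$. First I would record that this forces strong arithmetic conditions: from the plane identity $\sum_{k=0}^{n-1}\mathsf{P}_X(k)=(2a-n)^2$ with $X\in\G_n(a)$, together with $\mathsf{P}_X(0)=n$ and $\mathsf{P}_X(k)=0$ otherwise, one obtains $(2a-n)^2=n$. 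Hence $n$ must be a perfect square, and combined with $4\mid n$ this yields $n=4u^2$ and $a=2u^2\pm u$. Moreover $\mathsf{P}_X(k)=n-4a+4i_k=0$ forces the intersection index $i_k=a-n/4$ to be constant in $k$, a rigidity that the machinery makes transparent.

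Second, I would use decimation invariance to show that circulant Hadamard sequences form a full $S$-set of $\mathfrak{S}(\Z_2^n,\Delta_nC_n)$. By (\ref{auto_decimated}) we have $\mathsf{P}_{\delta_rX}(i)=\mathsf{P}_X(ri)$, so $\theta(X_C)=(n,0,\dots,0)$ is preserved under every $\delta_r\in\Delta_n$; together with (\ref{reversing}) and (\ref{negative}) the property is stable under $R$ and under negation as well. Thus the whole orbit $X_G$ under $\Delta_nC_n$, and indeed under $H_n\Delta_nC_n$, consists of first rows of circulant Hadamard matrices, and the existence question depends only on the orbit, not on the representative. When $u>1$ the order $n=4u^2$ is composite, so the non-free sets $F_d(\Z_{2C}^n)$ of (\ref{Z_libre}) and the $\delta_r$-invariant sets $\mathbb{I}_{nC}(r)$ become available, and I would try to locate any hypothetical $X_C$ within the refined decomposition $\Z_{2C}^n=SF\oplus\widehat{S}F\oplus S\widehat{F}\oplus\widehat{SF}$, extracting parity and symmetry restrictions on $a$ and on the fixed-point structure of $X$ under $\Delta_n$.

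The decisive step is to rule out $n=4u^2>4$ altogether, and here the hard part is genuinely hard: this is the classical circulant Hadamard problem, open in general. The Schur-ring and decimation framework organizes the symmetry group of the solution set but does not by itself furnish a non-existence proof. To close the gap I would pass to the character theory of the indexing group $\Z_n$: the condition $\theta(X_C)=(n,0,\dots,0)$ is equivalent to the flat spectrum $|\widehat{X}(\chi)|^2=n$ for every character $\chi$, so each value $\widehat{X}(\chi)$ is an algebraic integer of absolute value $\sqrt{n}$ in $\Z[\zeta_n]$, while the decimation action corresponds to the Galois action $\zeta_n\mapsto\zeta_n^r$. I would then attempt to combine the $\Delta_n$-orbit rigidity established above with self-conjugacy and field-norm obstructions of Turyn type to force a contradiction. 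I expect this number-theoretic closure to be the main obstacle, which is precisely why the statement is recorded here as a conjecture rather than a theorem; the realistic contribution of the present machinery is therefore to reduce the problem to proving non-existence for a single $\Delta_n$-orbit in each order $n=4u^2$.
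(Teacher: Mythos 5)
You have set out to prove something the paper itself does not prove: this statement is the classical circulant Hadamard conjecture, recorded in the paper precisely as a \emph{conjecture}, with no proof offered. The paper's only move is the remark immediately following it --- that the conjecture is equivalent to the nonexistence of $X_{C}$ with $\theta(X_{C})=(4n,0,\dots,0)$ --- after which it retreats to proving non-existence under extra structural hypotheses (Theorems \ref{theo_circ_had_1} and \ref{theo_circ_had_2}, and Corollaries \ref{circ_had_sym} and \ref{circ_had_antisym} for the symmetric and antisymmetric cases). Your preparatory reductions are correct and in fact sharper than anything in the paper: the translation $\mathsf{P}_{X}(k)=0$ for $k\neq0$ iff $\omega(XC^{k}X)=n/2$, the plane identity forcing $(2a-n)^{2}=n$, hence $n=4u^{2}$ and $a=2u^{2}\pm u$ with constant intersection index $i_{k}=a-n/4$ (this is the classical Turyn--Brualdi arithmetic, consistent with [12]), and the observation via (\ref{auto_decimated}), (\ref{reversing}) and (\ref{negative}) that the set of circulant Hadamard first rows is a union of $H_{n}\Delta_{n}C_{n}$-orbits, i.e.\ an $S$-set of $\mathfrak{S}(\Z_{2}^{n},H_{n}\Delta_{n}C_{n})$. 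You are also candid that the decisive non-existence step is open, which is the honest assessment.

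Two genuine defects remain. First, and unavoidably, there is no proof here: paragraphs one and two constrain where a counterexample could live, and paragraph three gestures at Turyn-type self-conjugacy and field-norm obstructions in $\Z[\zeta_{n}]$, but those techniques are known \emph{not} to close all orders $n=4u^{2}$ (they resolve many cases, as in later work of Turyn and Schmidt, but not all), so the ``number-theoretic closure'' is not a step you could actually execute. Second, your closing claim that the machinery reduces the problem ``to proving non-existence for a single $\Delta_{n}$-orbit in each order'' is wrong: invariance shows the solution set is a \emph{union} of orbits, so a non-existence proof must exclude every orbit of the relevant weight classes $\G_{n}(2u^{2}\pm u)$; nothing singles out one canonical orbit, and ruling out one orbit rules out only that orbit. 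The accurate summary of what your framework (and the paper's) buys is weaker: existence is an orbit-level question, so searches and structural exclusions may be conducted orbit by orbit --- which is exactly how the paper uses it, excluding the orbits with partitioned or alternating-sign partitioned representatives while leaving the conjecture itself untouched.
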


To prove this conjecture is equivalent to prove that there is no $X_{C}$ such that 
$\theta(X_{C})=(4n,0,...,0)$. Thus, it is enough to prove that $\mathsf{P}_{X}(k)\neq 0$ for 
some $k\neq 0$. Then we prove that circulant Hadamard matrices cant'n to exist if these have some
special structure. 

\begin{definition}
We shall say that a binary sequence $X$ in $\Z_{2}^{nr}$ is \textbf{partitioned} in a basic set 
of the Schur ring $\mathfrak{S}(\Z_{2}^{n},G)$, with $G\leq Aut(Z_{2}^{n})$, if
$X=(Y_{1},Y_{2},...,Y_{r})$ where $Y_{i}\in A_{G}$ and $A_{G}$ is the orbit of $A$ under the 
action of $G$.
\end{definition}

We will prove that a circulant Hadamard matrix never is partitioned in 
$\mathfrak{S}(\Z_{2}^{n},S_{n})$

\begin{theorem}\label{theo_circ_had_1}
There is no circulant Hadamard matrices $X_{C}$ in $\Z_{2C}^{4rn}$ with 
$$X=(Y_{1},Y_{2},...,Y_{2r})\in\G_{2n}(a)\times\cdots\times\G_{2n}(a)\subset\G_{4nr}(2ar),$$ 
$Y_{i}\in\G_{2n}(a)$ for all $i=1,2,...,2r$ if 
\begin{enumerate}
\item $nr$ is an odd number and $\sum_{i=1}^{2r}(\frac{1}{2}\omega(Y_{i}Y_{i+1})+2a-2n)$ an
even number,
\item or if $nr$ is an even number and $\sum_{i=1}^{2r}(\frac{1}{2}\omega(Y_{i}Y_{i+1})+2a-2n)$ an
odd number.
\end{enumerate}
\end{theorem}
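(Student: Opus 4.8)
The plan is to test the circulant Hadamard condition against a single, well-chosen shift of the periodic autocorrelation. Recall that $X_{C}$ corresponds to a circulant Hadamard matrix of order $4nr$ exactly when $\mathsf{P}_{X}(k)=0$ for every $k\neq 0$, so it suffices to exhibit one nonzero shift $k$ for which $\mathsf{P}_{X}(k)$ cannot vanish. Since $X=(Y_{1},\dots,Y_{2r})$ is built from blocks of length $2n$, the natural choice is $k=2n$, the shift by exactly one block, because it carries block $j$ onto block $j+1$ without breaking any block boundary.

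First I would compute $\mathsf{P}_{X}(2n)$ in terms of the blocks. Under the cyclic shift by $2n$ the within-block position $s$ of block $i$ is matched with position $s$ of block $i+1$, with block indices taken cyclically modulo $2r$, so
\[
\mathsf{P}_{X}(2n)=\sum_{i=1}^{2r}\langle Y_{i},Y_{i+1}\rangle,\qquad Y_{2r+1}:=Y_{1}.
\]
Each inner product is agreements minus disagreements, giving $\langle Y_{i},Y_{i+1}\rangle=2\omega(Y_{i}Y_{i+1})-2n$, and hence $\mathsf{P}_{X}(2n)=2\sum_{i=1}^{2r}\omega(Y_{i}Y_{i+1})-4nr$.

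Next I would settle the parity bookkeeping. Passing to the $\{0,1\}$ description of $\Z_{2}$-sequences, one has $\omega(Y_{i}Y_{i+1})=2n-\mathrm{dist}(Y_{i},Y_{i+1})$, and the Hamming distance is congruent mod $2$ to $\omega(Y_{i})+\omega(Y_{i+1})=2a$; thus each $\omega(Y_{i}Y_{i+1})$ is even and the quantity $T:=\sum_{i=1}^{2r}\bigl(\tfrac12\omega(Y_{i}Y_{i+1})+2a-2n\bigr)$ is a genuine integer. Because the $2r$ summands $2a-2n$ contribute an even total, $T\equiv\sum_{i=1}^{2r}\tfrac12\omega(Y_{i}Y_{i+1})\pmod 2$. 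If $X_{C}$ were a circulant Hadamard matrix then $\mathsf{P}_{X}(2n)=0$, forcing $\sum_{i=1}^{2r}\omega(Y_{i}Y_{i+1})=2nr$ and therefore $\sum_{i=1}^{2r}\tfrac12\omega(Y_{i}Y_{i+1})=nr$, so that $T\equiv nr\pmod 2$. In case (1), $nr$ odd makes $T$ odd, contradicting the hypothesis that $T$ is even; in case (2), $nr$ even makes $T$ even, contradicting the hypothesis that $T$ is odd. Either way no such $X_{C}$ exists.

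The only genuinely delicate point is the block-alignment bookkeeping in the first step: one must confirm that $k=2n$ is a nonzero shift, which holds since $0<2n<4nr$ for $r\ge 1$, and that it cyclically rotates the blocks, so the cross terms between $Y_{i}$ and $Y_{i+1}$ appear with the correct cyclic indexing $Y_{2r+1}=Y_{1}$. Once this is verified the remainder is a pure parity count, and the role of the hypotheses on $nr$ and on the parity of $T$ is simply to contradict the forced congruence $T\equiv nr\pmod 2$.
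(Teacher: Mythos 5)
Your proposal is correct and follows essentially the same route as the paper: both test the single shift $k=2n$, expand the autocorrelation blockwise as $\sum_{i=1}^{2r}\omega(Y_{i}Y_{i+1})$ with cyclic indexing, and derive the contradiction from the forced parity congruence between $nr$ and $\sum_{i}\tfrac12\omega(Y_{i}Y_{i+1})$. The only cosmetic difference is that the paper obtains $\omega(Y_{i}Y_{i+1})=2n-2a+2h_{i}$, $h_{i}\in[0,a]$, from the product formula $\G_{2n}(a)^{2}=\bigcup_{i=0}^{a}\G_{2n}(2n-2a+2i)$, whereas you establish the evenness of $\omega(Y_{i}Y_{i+1})$ directly via a Hamming-distance parity count, which amounts to the same bookkeeping.
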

\begin{proof}
Take $X=(Y_{1},Y_{2},...,Y_{2r})$ in $\Z_{2}^{4nr}$. Then
\begin{eqnarray*}
\omega(XC^{2n}X)&=&\omega(Y_{1}Y_{2},Y_{2}Y_{3},...,Y_{2r}Y_{1}))\\
&=&\omega(Y_{1}Y_{2})+\omega(Y_{2}Y_{3})+\cdots+\omega(Y_{2r}Y_{1}).
\end{eqnarray*}
Suppose that $\omega(XC^{2n}X)=2rn$. As the $Y_{i}Y_{i+1}\in\G_{2n}(a)^{2}$ it follows that
\begin{equation*}
4rn-4ra+2\sum_{i=1}^{2r}h_{i}=2rn
\end{equation*}
where the $h_{i}$ are integer ranging in $[0,a]$. Thereupon

\begin{equation*}
ar=\frac{nr}{2}+\frac{1}{2}\sum_{i=1}^{2r}h_{i}
\end{equation*}
is an integer only if $nr$ and $\sum_{i=1}^{2r}h_{i}$ both are even numbers or both are odd numbers.
\end{proof}

As a consequence we have the following result on circulant Hadamard matrices partitioned in basic
sets of the Schur rings $\mathfrak{S}(\Z_{2}^{2n},H_{2n}\Delta_{2n}C_{2n})$, 
$\mathfrak{S}(\Z_{2}^{2n},H_{2n}C_{2n})$, $\mathfrak{S}(\Z_{2}^{2n},\Delta_{2n}C_{2n})$,
$\mathfrak{S}(\Z_{2}^{2n},C_{2n})$, $\mathfrak{S}(\Z_{2}^{2n},\Delta_{2n})$ and 
$\mathfrak{S}(\Z_{2}^{2n},H_{2n})$.

\begin{corollary}
Let $A_{G}$ be a basic set in $\mathfrak{S}(\Z_{2}^{2n},G)$, where $G$ is some of the following
groups $H_{2n}\Delta_{2n}C_{2n}$, $\Delta_{2n}C_{2n}$, $H_{2n}C_{2n}$, $C_{2n}$, $\Delta_{2n}$. Then there is no circulant Hadamard matrices $X_{C}$ in $\Z_{2C}^{4rn}$ with 
$$X=(Y_{1},Y_{2},...,Y_{2r})\in A_{G}\times\cdots\times A_{G}\subset\G_{4nr}(2ar),$$ 
$Y_{i}\in A_{G}$ for all $i=1,2,...,2r$ if 
\begin{enumerate}
\item $nr$ is an odd number and $\sum_{i=1}^{2r}(\frac{1}{2}\omega(Y_{i}Y_{i+1})+2a-2n)$ an
even number,
\item or if $nr$ is an even number and $\sum_{i=1}^{2r}(\frac{1}{2}\omega(Y_{i}Y_{i+1})+2a-2n)$ an
odd number.
\end{enumerate}
\end{corollary}

When $G=H_{2n}$, then $X=(A,RA)$ is a symmetric sequence. The following corollary corresponds to
symmetric circulant Hadamard matrices

\begin{corollary}\label{circ_had_sym}
Let $X_{C}\in\Z_{2C}^{4n}$, $n$ an odd number, with $X=(A,RA)$. Then $X_{C}$ is not Hadamard.
\end{corollary}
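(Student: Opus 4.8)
The plan is to read this as the special case $r=1$ of Theorem \ref{theo_circ_had_1}, where the two blocks are $Y_{1}=A$ and $Y_{2}=RA$, i.e.\ the orbit $A_{H_{2n}}=\{A,RA\}$ of $A$ under the reversing group $H_{2n}=\{e,R\}$. First I would check that the partition hypothesis of the theorem is legitimate: since $\omega(A)=\omega(RA)$, both blocks lie in the same basic set $\G_{2n}(a)$ with $a=\omega(A)$, so $X=(A,RA)\in\G_{2n}(a)\times\G_{2n}(a)\subset\G_{4n}(2a)$ as required. I would also record the bookkeeping fact that the single block-shift $C^{2n}$ cyclically permutes $(A,RA)$ into $(RA,A)$, so that the product structure $X\,C^{2n}X=(Y_{1}Y_{2},\,Y_{2}Y_{1})$ used in the proof of Theorem \ref{theo_circ_had_1} applies here with $2r=2$ and indices read modulo $2$.

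Next I would evaluate the parity quantity appearing in that theorem, namely $\sum_{i=1}^{2}\bigl(\tfrac{1}{2}\omega(Y_{i}Y_{i+1})+2a-2n\bigr)$. Because componentwise multiplication is commutative, $Y_{1}Y_{2}=A\cdot RA=RA\cdot A=Y_{2}Y_{1}$, so the two half-weight terms coincide and the sum collapses to $\omega(A\cdot RA)+4a-4n$. The term $4a-4n$ is visibly even, so the whole sum has the same parity as $\omega(A\cdot RA)$, and everything reduces to determining this single parity.

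The key step, and the only one that is not pure arithmetic, is to show that $\omega(A\cdot RA)$ is always even. Here I would exploit that $A$ has even length $2n$: writing $A=(B,D)$ with $B,D\in\Z_{2}^{n}$ gives $RA=(RD,RB)$, hence $A\cdot RA=(B\cdot RD,\,D\cdot RB)$ and therefore $\omega(A\cdot RA)=\omega(BRD)+\omega(DRB)$. By \eqref{corollarySym} one has $\omega(BRD)=\omega(DRB)$, so $\omega(A\cdot RA)=2\,\omega(BRD)$ is even; this is precisely the even-length instance of the proposition computing $\omega(XRX)$. Consequently the displayed sum is even while $nr=n$ is odd, which places us exactly in case 1 of Theorem \ref{theo_circ_had_1}. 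That theorem then forbids a circulant Hadamard matrix $X_{C}$ of the form $(A,RA)$, so $X_{C}$ is not Hadamard. I expect no genuine obstacle beyond the even-length splitting of $A$; the rest is invoking the theorem and tracking parities.
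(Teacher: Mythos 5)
Your proof is correct and follows essentially the route the paper intends: the paper states this corollary as the $G=H_{2n}$ specialization of Theorem~\ref{theo_circ_had_1} (via the preceding corollary) with $r=1$, $Y_{1}=A$, $Y_{2}=RA$, exactly as you do. Your verification that the parity sum is even via $\omega(A\,RA)=2\omega(BRD)$ is precisely the even-length case of the paper's own proposition on $\omega(XRX)$, so you have simply made explicit the check the paper leaves implicit.
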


Finally, we show another one structure which is not a circulant Hadamard matrix

\begin{definition}
We shall say that a binary sequence $X$ in $\Z_{2}^{2nr}$ is \textbf{partitioned with alternated sign} 
in a basic set of the Schur ring $\mathfrak{S}(\Z_{2}^{n},G)$, with $G\leq Aut(Z_{2}^{n})$, 
if $X=(Y_{1},-Y_{2},...,Y_{2r-1},-Y_{2r})$ where $Y_{i}\in A_{G}$ and $A_{G}$ is the orbit of 
$A$ under the action of $G$.
\end{definition}

We will prove that a circulant Hadamard matrix never is partitioned with alternated sign in 
$\mathfrak{S}(\Z_{2}^{n},S_{n})$

\begin{theorem}\label{theo_circ_had_2}
There is no circulant Hadamard matrices $X_{C}\in\Z_{2C}^{4n}$, $n$ an odd number, with 
\begin{eqnarray*}
X&=&(Y_{1},-Y_{2},...,Y_{2r-1},-Y_{2r})\\
&\in&\G_{2n}(a)\times\G_{2n}(2n-a)\times\cdots\times\G_{2n}(a)\times\G_{2n}(2n-a)\subset\G_{4rn}(2rn),
\end{eqnarray*}
with $Y_{i}\in\G_{2n}(a)$, $i=1,2,...,2r$ if
\begin{enumerate}
\item $nr$ is an odd number and $\sum_{i=1}^{2r}(-\frac{1}{2}\omega(-Y_{i}Y_{i+1})+2a)$ an
even number,
\item or if $nr$ is an even number and $\sum_{i=1}^{2r}(-\frac{1}{2}\omega(-Y_{i}Y_{i+1})+2a)$ an
odd number.
\end{enumerate}
\end{theorem}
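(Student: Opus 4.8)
The plan is to follow the proof of Theorem \ref{theo_circ_had_1} almost verbatim, the only new ingredient being the sign bookkeeping created by the alternating pattern. I would argue by contradiction, assuming $X_{C}$ is a circulant Hadamard matrix. Each block has length $2n$ and there are $2r$ of them, so $X$ has length $4rn$; counting weights, the $r$ odd-position blocks contribute $a$ each and the $r$ even-position blocks $-Y_{2i}$ contribute $2n-a$ each, giving $\omega(X)=ra+r(2n-a)=2rn$, consistent with $X\in\G_{4rn}(2rn)$. Being circulant Hadamard forces $\mathsf{P}_{X}(k)=0$ for all $k\neq0$, and I would use this at the block-aligned shift $k=2n$.

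The heart of the argument is the evaluation of $XC^{2n}X$. Writing the blocks as $B_{i}=(-1)^{i+1}Y_{i}$, the shift $C^{2n}$ rotates the $2r$ blocks cyclically by one position, so the $i$-th block of $XC^{2n}X$ is
\begin{equation*}
B_{i}B_{i+1}=(-1)^{i+1}(-1)^{i+2}Y_{i}Y_{i+1}=-Y_{i}Y_{i+1},
\end{equation*}
with indices read modulo $2r$. Because the number of blocks is the even number $2r$, the alternation closes up consistently around the cycle and even the wrap-around block equals $-Y_{2r}Y_{1}$. Hence $\omega(XC^{2n}X)=\sum_{i=1}^{2r}\omega(-Y_{i}Y_{i+1})$, and by (\ref{producto}) each $-Y_{i}Y_{i+1}$ is the negation of an element of $\G_{2n}(a)^{2}$, so each $\omega(-Y_{i}Y_{i+1})$ is even.

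Now I would impose the Hadamard condition. From $\mathsf{P}_{X}(2n)=2\omega(XC^{2n}X)-4rn=0$ I get $\sum_{i=1}^{2r}\omega(-Y_{i}Y_{i+1})=2rn$. Substituting this into the quantity appearing in the statement,
\begin{equation*}
\sum_{i=1}^{2r}\Bigl(-\tfrac{1}{2}\omega(-Y_{i}Y_{i+1})+2a\Bigr)=-\tfrac{1}{2}(2rn)+4ar=4ar-rn,
\end{equation*}
which is an integer congruent to $nr$ modulo $2$ since $4ar$ is even. Thus this sum is even exactly when $nr$ is even, which directly contradicts both hypotheses of the theorem: in case 1, $nr$ is odd yet the sum is assumed even, and in case 2, $nr$ is even yet the sum is assumed odd. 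In either case the assumption that $X_{C}$ is Hadamard is untenable, proving the claim.

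The only step I expect to require care is the sign computation for $XC^{2n}X$: one must verify that the alternating signs collapse to a single factor $-1$ on every consecutive product and that the cyclic closure preserves this, which succeeds precisely because there is an even number $2r$ of blocks. Everything downstream is the same integrality-and-parity bookkeeping used in Theorem \ref{theo_circ_had_1}.
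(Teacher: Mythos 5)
Your proposal is correct and takes essentially the same approach as the paper: both arguments evaluate $\omega(XC^{2n}X)$ at the block-aligned shift $k=2n$, use the even number of blocks to collapse the alternating signs so every block product is $-Y_{i}Y_{i+1}$, and extract the same parity contradiction from the Hadamard condition $\omega(XC^{2n}X)=2rn$. Your direct computation that the statement's sum equals $4ar-nr\equiv nr\pmod{2}$ is just a streamlined form of the paper's integrality condition $ar=\frac{nr}{2}+\frac{1}{2}\sum_{i=1}^{2r}h_{i}$ with $h_{i}=a-\frac{1}{2}\omega(-Y_{i}Y_{i+1})$.
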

\begin{proof}
Take $X=(Y_{1},-Y_{2},...,-Y_{2r})$ in $\Z_{2}^{4nr}$, with $Y_{i}\in\G_{2n}(a)$. Then
\begin{eqnarray*}
\omega(XC^{2n}X)&=&\omega(-Y_{1}Y_{2},-Y_{2}Y_{3},...,-Y_{2r}Y_{1}))\\
&=&\omega(-Y_{1}Y_{2})+\omega(-Y_{2}Y_{3})+\cdots+\omega(-Y_{2r}Y_{1})\\
&=&2r(2n-(2n-2a))-2\sum_{i=1}^{2r}h_{i}\\
&=&4ar-2\sum_{i=1}^{2r}h_{i}
\end{eqnarray*}
where the $h_{i}$ are integer ranging in $[0,a]$. Suppose that $\omega(XC^{2n}X)=2rn$. Then it 
is followed that

\begin{equation*}
ar=\frac{nr}{2}+\frac{1}{2}\sum_{i=1}^{2r}h_{i}
\end{equation*}
is an integer only if $nr$ and $\sum_{i=1}^{2r}h_{i}$ both are even numbers or both are odd numbers.
\end{proof}

Equally, we have the following corollary for the non-existence of circulant Hadamard matrices 
partitioned with alternated sign in basic sets of $\mathfrak{S}(\Z_{2}^{2n},G)$

\begin{corollary}
Let $A_{G}$ be a basic set in $\mathfrak{S}(\Z_{2}^{2n},G)$, where $G$ is some of the following
groups $H_{2n}\Delta_{2n}C_{2n}$, $\Delta_{2n}C_{2n}$, $H_{2n}C_{2n}$, $C_{2n}$, $\Delta_{2n}$. Then there is no circulant Hadamard matrices $X_{C}$ in $\Z_{2C}^{4rn}$ with 
$$X=(Y_{1},-Y_{2},...,Y_{2r-1},-Y_{2r})\in A_{G}\times\cdots\times A_{G}\subset\G_{4nr}(2nr),$$ 
$Y_{i}\in A_{G}\subset\G_{2n}(a)$ for all $i=1,2,...,2r$ if 
\begin{enumerate}
\item $nr$ is an odd number and $\sum_{i=1}^{2r}(-\frac{1}{2}\omega(-Y_{i}Y_{i+1})+2a)$ an
even number,
\item or if $nr$ is an even number and $\sum_{i=1}^{2r}(-\frac{1}{2}\omega(-Y_{i}Y_{i+1})+2a)$ an
odd number.
\end{enumerate}
\end{corollary}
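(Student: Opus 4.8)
The plan is to deduce the corollary directly from Theorem \ref{theo_circ_had_2}, which already handles the case where the blocks range over a full basic set $\G_{2n}(a)$ of $\mathfrak{S}(\Z_{2}^{2n},S_{n})$. The conceptual point I would establish first is that each of the listed groups — $H_{2n}\Delta_{2n}C_{2n}$, $\Delta_{2n}C_{2n}$, $H_{2n}C_{2n}$, $C_{2n}$, and $\Delta_{2n}$ — is a subgroup of the coordinate permutation group $S_{2n}\le Aut(\Z_{2}^{2n})$. This holds because the generators $C$, $R$, and $\delta_{r}$ each act on a $\Z_{2}$-sequence merely by permuting its coordinates (cyclic shift, reversal, and the decimation $x_{i}\mapsto x_{ri\,(\mathrm{mod}\,2n)}$ with $(r,2n)=1$, respectively), so every product of them is again a coordinate permutation.

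Since any coordinate permutation preserves the Hamming weight $\omega$, a single $G$-orbit $A_{G}$ must lie entirely within one weight level; that is, $A_{G}\subset\G_{2n}(a)$ for a unique $a$. This is exactly the containment recorded in the hypothesis $Y_{i}\in A_{G}\subset\G_{2n}(a)$, and it is what allows the coarser Schur rings to inherit the conclusion of the finer ring $\mathfrak{S}(\Z_{2}^{2n},S_{n})$.

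With this in place, I would observe that the sequence $X=(Y_{1},-Y_{2},\dots,Y_{2r-1},-Y_{2r})$ with each $Y_{i}\in A_{G}\subset\G_{2n}(a)$ already meets every hypothesis of Theorem \ref{theo_circ_had_2}: negation flips all signs, so $-Y_{i}\in\G_{2n}(2n-a)$, giving $X\in\G_{2n}(a)\times\G_{2n}(2n-a)\times\cdots\subset\G_{4rn}(2rn)$, precisely the ``partitioned with alternated sign'' form the theorem treats. Moreover the two parity alternatives stated here — $nr$ odd with $\sum_{i=1}^{2r}\!\left(-\tfrac12\omega(-Y_{i}Y_{i+1})+2a\right)$ even, or $nr$ even with that sum odd — are verbatim the hypotheses of Theorem \ref{theo_circ_had_2}. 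I would therefore simply invoke that theorem to conclude that $X_{C}$ is not Hadamard.

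I do not anticipate a genuine obstacle: the whole argument rests on the weight-preservation remark forcing $A_{G}\subset\G_{2n}(a)$, after which the corollary is a direct specialization of Theorem \ref{theo_circ_had_2}, exactly paralleling the proof-free corollary stated just after Theorem \ref{theo_circ_had_1}. The only step demanding a moment's care is confirming that each listed $G$ is indeed generated by coordinate permutations, hence $G\le S_{2n}$; once that is verified no fresh computation is required.
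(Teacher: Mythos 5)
Your proposal is correct and matches the paper's (implicit) argument: the paper states this corollary without proof precisely because, as you observe, each listed $G$ is generated by coordinate permutations ($C$, $R$, $\delta_{r}$), so every orbit $A_{G}$ preserves Hamming weight and lies in a single $\G_{2n}(a)$, making the corollary a direct specialization of Theorem \ref{theo_circ_had_2}. Your one added care point --- explicitly verifying $G\le S_{2n}$ and hence $A_{G}\subset\G_{2n}(a)$ --- is exactly the fact the paper relies on but leaves unstated.
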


Equally, when $G=H_{2n}$, then $X=(A,-RA)$ is a antisymmetric sequence. The following corollary
corresponds to antisymmetric circulant Hadamard matrices

\begin{corollary}\label{circ_had_antisym}
Let $X_{C}\in\Z_{2C}^{4n}$, $n$ an odd number, with $X=(A,-RA)$. Then $X_{C}$ is not Hadamard.
\end{corollary}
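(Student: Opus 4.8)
The plan is to obtain this statement as the instance $r = 1$, $G = H_{2n}$ of the preceding corollary, whose hypotheses I must then verify are automatically met. Recall that the orbit of $A$ under $H_{2n} = \{e, R\}$ is $A_{H_{2n}} = \{A, RA\}$, and since $\omega(RA) = \omega(A)$ both representatives lie in the same basic set $\G_{2n}(a)$. Writing the alternated-sign partition with $r=1$ produces precisely $X = (Y_{1}, -Y_{2}) = (A, -RA)$ with $Y_{1} = A$ and $Y_{2} = RA \in A_{H_{2n}}$, so $X$ fits the hypothesis of that corollary with $nr = n$.

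First I would fix the parity branch. Since $n$ is odd, $nr = n$ is odd, so to conclude nonexistence I must land in case 1, i.e.\ show that $\sum_{i=1}^{2}\left(-\tfrac{1}{2}\omega(-Y_{i} Y_{i+1}) + 2a\right)$ is an even number (indices taken cyclically, so $Y_{3} = Y_{1} = A$). Because the component-wise product is commutative, $-Y_{1} Y_{2} = -(A\cdot RA) = -(RA\cdot A) = -Y_{2} Y_{3}$, so the two summands coincide and the sum collapses to $-\omega(-(A\cdot RA)) + 4a$. As $4a$ is even, the parity of the whole sum is exactly the parity of $\omega(-(A\cdot RA))$, independently of $a$.

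The crux is therefore to show $\omega(-(A\cdot RA))$ is even. Negating a $\Z$-sequence of length $2n$ flips every entry, so $\omega(-(A\cdot RA)) = 2n - \omega(A\cdot RA)$; as $2n$ is even it suffices to check that $\omega(A\cdot RA)$ is even. Here I would invoke the proposition on $\omega(XRX)$ for sequences of even length: applied to $A \in \Z_{2}^{2n}$, writing $A = (B, D)$ with $B, D \in \Z_{2}^{n}$, it gives $\omega(A\cdot RA) = 2\omega(BRD)$, which is manifestly even. Hence $\omega(-(A\cdot RA))$ is even, the displayed sum is even, case 1 of the preceding corollary applies, and $X_{C}$ is not Hadamard.

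The only genuine content is the parity computation in the last paragraph; everything preceding it is bookkeeping that specializes the general corollary, so I expect that computation to be the sole obstacle. As an independent check one can bypass the corollary entirely: since $C^{2n}X = (-RA, A)$, the block structure gives $XC^{2n}X = (-(A\cdot RA), -(A\cdot RA))$, so a circulant Hadamard matrix would force $\mathsf{P}_{X}(2n) = 0$, i.e.\ $\omega(A\cdot RA) = n$; but $\omega(A\cdot RA)$ is even while $n$ is odd, a contradiction. Either route resolves the claim by the same mechanism, namely the evenness of $\omega(A\cdot RA)$ set against the oddness of $n$.
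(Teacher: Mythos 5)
Your proposal is correct and follows exactly the route the paper intends: the corollary is the specialization $r=1$, $G=H_{2n}$ of the preceding alternated-sign corollary, with $Y_{1}=A$, $Y_{2}=RA$ and $X=(A,-RA)$, just as you set it up. You actually go further than the paper, which states the corollary with no proof at all and silently assumes that case 1 applies: for the conclusion to be unconditional one must check that $\sum_{i=1}^{2}\bigl(-\tfrac{1}{2}\omega(-Y_{i}Y_{i+1})+2a\bigr)$ is automatically even when $n$ is odd, and your reduction of this sum to the parity of $\omega(ARA)$, settled by the paper's proposition $\omega(ARA)=2\omega(BRD)$ for sequences of even length $2n$, is precisely the missing detail. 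Your independent check --- $XC^{2n}X=(-(ARA),-(ARA))$, so a Hadamard $X_{C}$ would force $\omega(ARA)=n$, odd, against the evenness of $\omega(ARA)$ --- is the same mechanism made self-contained, and it pleasingly mirrors how the paper's Corollary \ref{circ_had_sym} relates to the classical symmetric result of [12]. No gaps; if anything, your write-up supplies the verification the paper should have included.
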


The corollary \ref{circ_had_sym} is a known theorem proved in the 1965 paper [12](Corollary 2).
In the theorems \ref{theo_circ_had_1} and \ref{theo_circ_had_2} stronger results are proved.

\subsection{Hadamard matrices with one circulant core}

A Hadamard matrix with one circulant core of order $p$ is a $p\times p$ matrix of the form
\begin{equation*}
H=
\left( 
\begin{array}{cc}
1 & e \\ 
e^{t} & A_{C}
\end{array}
\right)
\end{equation*}
where $e$ is the row vector $(1,1,1,...,1)$ of dimension $p$ and $e^{t}$
the transposed vector of $e$ and $A_{C}=(a_{i,j})$ a circulant matrix or circulant core of order
$n-1$. A Hadamard matrix of order $p+1$ with circulant core can be constructed if
\begin{enumerate}
\item[(1)] $p\equiv3\mod4$ is a prime
\item[(2)] $p=q(q+2)$ where $q$ and $q+2$ are both primes
\item[(3)] $p=2^{t}-1$\ where $t$ is a positive integer
\item[(4)] $p=4x^{2}+27$ where $p$ is a prime and $x$ a positive integer.
\end{enumerate}

We have the following

\begin{conjecture}
Above are the only possible orders for an Hadamard matrix with one circulant core.
\end{conjecture}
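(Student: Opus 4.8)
The plan is to convert the matrix condition into a single autocorrelation constraint and then let the decimation calculus of Section~4 play the role of the classical multiplier. First I would write $s=(s_{0},\dots,s_{p-1})$ for the first row of the circulant core $A_{C}$ and let $X_{C}\in\Z_{2C}^{p}$ be its orbit. Orthogonality of the all-ones border row with each core row gives $\sum_{k}s_{k}=-1$, so $\omega(s)=(p-1)/2$ and, since $p+1\equiv0\pmod{4}$, necessarily $p\equiv3\pmod{4}$; orthogonality of two distinct core rows gives $\mathsf{P}_{s}(k)=-1$ for every $k\neq0$. In the language of the periodic-autocorrelation discussion of Section~4 this is exactly $\theta(X_{C})=(p,-1,-1,\dots,-1)$, the flat profile $(n,a,\dots,a)$ treated there specialised to $a=-1$. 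Hence a one-circulant-core Hadamard matrix of order $p+1$ exists if and only if such an $X_{C}$ exists, i.e. if and only if $s$ is the $\pm1$ indicator of a cyclic $\bigl(p,(p-1)/2,(p-3)/4\bigr)$ Hadamard difference set.

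Second, I would identify the four listed forms with the classical constructions of such difference sets: quadratic-residue (Paley/Legendre) sets for a prime $p\equiv3\pmod{4}$; the twin-prime construction for $p=q(q+2)$; maximal-length (Singer) sequences for $p=2^{t}-1$; and Hall's sextic-residue sets for the primes $p=4x^{2}+27$. With this dictionary the Conjecture becomes the assertion that these families exhaust all cyclic Hadamard difference sets, so the task reduces to showing that $\theta(X_{C})=(p,-1,\dots,-1)$ is unsolvable for every $p$ lying outside the four families.

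Third, the attack itself runs through decimation. Because the off-peak entries of $\theta(X_{C})$ are all equal, diagram~(\ref{diagram}) gives $\theta(\delta_{r}X_{C})=\delta_{r}\,\theta(X_{C})=\theta(X_{C})$ for every $r\in\Z_{p}^{*}$, so the entire $\Delta_{p}$-orbit of $X_{C}$ carries the same profile. The multiplier theorem then upgrades this for suitable $r$: if a prime $t\mid(p+1)/4$ satisfies $t\nmid p$ and $t>(p-3)/4$, then $\delta_{t}$ fixes a translate of $X$, i.e.\ after a cyclic shift $X_{C}\in\mathbb{I}_{pC}(t)$. Invariance of $X$ under $\langle\delta_{t}\rangle$ forces $s$ to be constant on the cyclotomic cosets of $t$ modulo $p$; feeding this back into the balance condition $\omega(s)=(p-1)/2$ and the two-level condition $\mathsf{P}_{s}(k)=-1$, and estimating the resulting Gauss and Jacobi sums, should either reconstruct one of the four families for that $p$ or yield a numerical contradiction that excludes $p$.

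The hard part, and the reason the statement is posed as a Conjecture, is that this exclusion cannot presently be carried out for all $p$. The multiplier step needs a prime divisor $t$ of the order $(p+1)/4$ with $t>(p-3)/4$, a hypothesis that fails for the overwhelming majority of $p$; when no small multiplier is available the cyclotomic-coset rigidity evaporates and the character-sum bounds are too weak to close the gap. Thus the decimation framework of Section~4 organizes the problem and settles the $p$ admitting a large multiplier (small $p$ being disposed of by direct search), but the general classification of cyclic Hadamard difference sets remains open; the honest deliverable of this plan is the reduction of the Conjecture to that classification, together with the partial exclusions just described.
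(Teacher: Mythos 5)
There is nothing to compare your attempt against: the statement you were given is a \emph{conjecture}, and the paper offers no proof of it --- nor could it, since the assertion is equivalent to the classification of cyclic Hadamard difference sets with parameters $\bigl(p,(p-1)/2,(p-3)/4\bigr)$, a long-standing open problem. Your proposal is honest on exactly this point, and its sound content matches the little the paper does say around the conjecture: the paper records that the core satisfies $A_{C}\in\G_{pC}\left(\frac{p-1}{2}\right)$ with autocorrelation vector $(p,-1,-1,\dots,-1)$, hence $A_{C}\in\mathbb{I}_{pC}(a)$ for some $a\in\Z_{p}^{*}$, which is precisely your first step (border orthogonality gives $\omega(s)=(p-1)/2$ and $\mathsf{P}_{s}(k)=-1$ for $k\neq0$, and $p\equiv3\pmod4$). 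Your dictionary between the four listed order families and the quadratic-residue, twin-prime, Singer and Hall sextic-residue difference sets is also the standard and correct one. But all of this is a \emph{reduction} of the conjecture to the difference-set classification, not a proof, and you say as much; so the deliverable is a correct reformulation plus partial exclusions, which is the most anyone can currently claim.

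One technical point in your multiplier step deserves sharpening. With $(v,k,\lambda)=\bigl(p,(p-1)/2,(p-3)/4\bigr)$ one has $n=k-\lambda=(p+1)/4$, so the classical multiplier hypothesis ``prime $t\mid n$, $t\nmid v$, $t>\lambda$'' demands a prime $t\mid(p+1)/4$ with $t>(p-3)/4=n-1$; the only candidate is $t=n$ itself, i.e.\ the hypothesis applies only when $(p+1)/4$ is prime. So the range where your cyclotomic-coset rigidity kicks in is even thinner than ``the overwhelming majority of $p$ fail it'' suggests --- it is essentially the single-prime-order case. Separately, note that your multiplier route (a fixed translate of the difference set) is actually more rigorous than the paper's own heuristic in Section~4, where a flat profile $\theta(X_{C})=(n,a,\dots,a)$ is asserted to yield some $Y\in X_{C}$ with $\delta_{r}Y=Y$; equality of autocorrelation vectors does not by itself force such a fixed point, whereas the multiplier theorem genuinely does under its (restrictive) divisibility hypotheses. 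In short: no gap in your self-assessment, but the statement remains a conjecture, and your text should be framed as a reduction and partial attack, not spliced in as a proof.
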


As $H$ above is Hadamard, then $A_{C}$ is in $\G_{pC}\left(\frac{p-1}{2}\right)$. Also,
the autocorrelation vector $\theta(A_{C}^{2})$ is equal to $(p,-1,-1,...,-1)$. Hence 
$A_{C}\in\mathbb{I}_{pC}(a)$, for some $a\in\Z_{p}^{*}$.

In the following theorem is proved that if a Hadamard matrix with one 
circulant core exists, then its core never is partitioned in $\G_{n}(a)$, this is, if 
$A=(Y_{1},Y_{2},...,Y_{r})$, $Y_{i}\in\G_{n}(a)$, then must be $r=1$.

\begin{theorem}
There is no Hadamard matrices with one circulant core $A$ in $\Z_{2}^{rn}$ with 
$A\in\G_{n}(a)\times\cdots\times\G_{n}(a)\subset\G_{nr}(ar)$, with $nr\equiv3\mod4$.
\end{theorem}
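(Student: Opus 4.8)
The plan is to show that the weight that the Hadamard property forces on a circulant core is incompatible with splitting that core into $r\ge 2$ equal-weight blocks, so any admissible block partition must be trivial. First I would record what ``Hadamard matrix with one circulant core'' imposes on $A$: as already observed just before the statement, orthogonality of the all-ones row with each circulant row forces $A\in\G_{rn}\left(\frac{rn-1}{2}\right)$, that is $\omega(A)=\frac{rn-1}{2}$. On the other hand the partition hypothesis $A\in\G_{n}(a)\times\cdots\times\G_{n}(a)\subset\G_{nr}(ar)$ says directly that $\omega(A)=ar$. Equating these two expressions for $\omega(A)$ is the whole game, and the coincidence of the two weights is what I would exploit.

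Second, to stay in the spirit of Theorems \ref{theo_circ_had_1} and \ref{theo_circ_had_2} I would first test the block shift $C^{n}$. Writing $A=(Y_{1},\dots,Y_{r})$ one gets $AC^{n}A=(Y_{1}Y_{2},Y_{2}Y_{3},\dots,Y_{r}Y_{1})$, each factor lying in $\G_{n}(a)^{2}$, so $\omega(Y_{j}Y_{j+1})=n-2a+2h_{j}$ with $h_{j}$ ranging in $[0,a]$; since the core autocorrelation equals $-1$ at every nonzero shift, $\mathsf{P}_{A}(n)=-1$ gives $\omega(AC^{n}A)=\frac{rn-1}{2}$, whence $rn-2ar+2\sum_{j}h_{j}=\frac{rn-1}{2}$. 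The obstacle here — and the reason this theorem needs a different engine than the previous two — is that this relation is perfectly \emph{consistent} once $rn\equiv 3\pmod 4$: it merely restates $\sum_{j}h_{j}=\frac{rn-3}{4}$, an honest nonnegative integer. So, unlike the alternated/periodic cases, a single block shift produces no parity contradiction, and the mod-$4$ hypothesis is not what drives the argument.

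The genuine contradiction comes from the exact core weight. Combining $ar=\omega(A)=\frac{rn-1}{2}$ yields $2ar=rn-1$, that is
\begin{equation*}
r(n-2a)=1 .
\end{equation*}
Since $r$ and $n-2a$ are integers and $r\ge 1$, this forces $r=1$ (with $n-2a=1$). Equivalently $\gcd(r,rn-1)=\gcd(r,-1)=1$, and because $rn-1$ is even while $r$ is odd, $r\mid \frac{rn-1}{2}$ holds only when $r=1$; hence $a=\frac{rn-1}{2r}$ is never an integer for $r\ge 2$. Thus no circulant core admitting a nontrivial equal-block partition $A=(Y_{1},\dots,Y_{r})$ with $Y_{i}\in\G_{n}(a)$ and $r\ge 2$ can be Hadamard, which is precisely the assertion that $r=1$. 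The only point I would take care to make explicit when writing this up is that the common block weight $a$ is \emph{the same} for every block (this is exactly the content of the partition hypothesis): it is that uniformity which produces the divisibility obstruction $r\mid\frac{rn-1}{2}$, while the role of $nr\equiv 3\pmod 4$ is simply to guarantee that $\frac{rn-1}{2}$ is an integer, i.e. that the core weight is well defined in the first place.
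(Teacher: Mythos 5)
Your proof is correct and is essentially the paper's own argument: both hinge on equating the core weight forced by the Hadamard condition, $\omega(A)=\frac{nr-1}{2}$, with the weight $ar$ given by the block partition, and on observing that $a=\frac{nr-1}{2r}$ (equivalently $r(n-2a)=1$) cannot hold with $a$ an integer unless $r=1$. Your preliminary remark that the block-shift test of Theorems \ref{theo_circ_had_1} and \ref{theo_circ_had_2} produces no parity contradiction here is a correct but inessential aside not present in the paper's (much terser) proof.
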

\begin{proof}
Take $A\in\G_{n}(a)\times\cdots\times\G_{n}(a)\subset\G_{nr}(ar)$, $r\geq1$. If
\begin{equation*}
H=
\left( 
\begin{array}{cc}
1 & e \\ 
e^{t} & A_{C}
\end{array}
\right)
\end{equation*}
is a Hadamard matrix, then must be $ar=\frac{nr-1}{2}$. Thus, $a=\frac{nr-1}{2r}$ is not an integer.
Therefore $r=1$.
\end{proof}

We have the corollary for some subgroup $G$ in $Aut(\Z_{2}^{n})$

\begin{corollary}
There is no Hadamard matrices with one circulant core $A$ in $\Z_{2}^{rn}$ with 
$A\in X_{G}\times\cdots\times X_{G}\subset\G_{nr}(ar)$, where $X_{G}$ is the orbit of $X$
under the action of $G$ when $G$ is some of the following groups: $H_{n}\Delta_{n}C_{n}$, 
$\Delta_{n}C_{n}$, $H_{n}C_{n}$, $C_{n}$, $\Delta_{n}$, $H_{n}$, with $nr\equiv3\mod4$.
\end{corollary}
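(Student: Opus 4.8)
The plan is to reduce the corollary directly to the theorem immediately preceding it, the key observation being that every group $G$ in the listed family acts on $\Z_{2}^{n}$ merely by permuting coordinate positions, so that an orbit under $G$ never leaves a single basic set of $\mathfrak{S}(\Z_{2}^{n},S_{n})$.

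First I would verify that each of $H_{n}$, $\Delta_{n}$ and $C_{n}$ --- and hence every product formed from them --- is a subgroup of the permutation automorphic group $S_{n}\leq Aut(\Z_{2}^{n})$. The cyclic shift $C$ of (\ref{cir1}), the reversal $R$, and each decimation $\delta_{k}$ are all defined by reindexing the entries of $X$ (sending $x_{i}$ to $x_{\sigma(i)}$ for a permutation $\sigma$ of $\{0,1,\dots,n-1\}$) and never turn a $+$ into a $-$. The commutation relations (\ref{R_d_C}), (\ref{R_C}) and (\ref{C_d}) guarantee that the generated sets close into groups, so $G\leq S_{n}$ in each of the cases $H_{n}\Delta_{n}C_{n}$, $\Delta_{n}C_{n}$, $H_{n}C_{n}$, $C_{n}$, $\Delta_{n}$, $H_{n}$.

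Second, I would observe that since any coordinate permutation leaves the Hamming weight $\omega$ unchanged, the whole orbit $X_{G}=\{gX:g\in G\}$ lies in $\G_{n}(\omega(X))$. Writing $a=\omega(X)$, this reads $X_{G}\subseteq\G_{n}(a)$. Feeding this into the hypothesis, if $A\in X_{G}\times\cdots\times X_{G}$ with $r$ factors, then $A\in\G_{n}(a)\times\cdots\times\G_{n}(a)\subset\G_{nr}(ar)$ by (\ref{cir4}). Thus $A$ satisfies exactly the hypothesis of the preceding theorem, and its conclusion --- that no such Hadamard matrix with one circulant core exists when $nr\equiv3\Mod{4}$ --- transfers verbatim, because the ruled-out family $\G_{n}(a)\times\cdots\times\G_{n}(a)$ contains $X_{G}\times\cdots\times X_{G}$.

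The only step requiring care, and indeed the sole nontrivial point, is the weight-preservation claim of the second paragraph: one must confirm that $\Delta_{n}$ and $H_{n}$ act as genuine coordinate permutations rather than as arbitrary automorphisms of $\Z_{2}^{n}$ (which could in principle flip signs and so change $\omega$). Once this is settled the corollary is immediate, since no computation beyond citing the previous theorem is needed.
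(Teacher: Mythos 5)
Your proposal is correct and matches the paper's intended argument: the paper states this corollary without a written proof, the evident reduction being exactly yours, namely that $H_{n}$, $\Delta_{n}$, $C_{n}$ and their products act by coordinate permutations, so $X_{G}\subseteq\G_{n}(\omega(X))$ and the hypothesis of the preceding theorem applies verbatim. Your extra care in checking that decimations and reversal permute indices without flipping signs is precisely the point the paper leaves implicit.
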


\section{Proof of the equation (\ref{producto})}
For consistency, set $\G_{n}(-1)=\{{}\}$. The formula is trivially true for $n=0,1$ and may be
checked directly for $n=2$. When $a=0$ and for all $n\geq 2$

\begin{equation*}
\G_{n}(0)\G_{n}(b)=\G_{n}(n-b).
\end{equation*}

Suppose $n\geq 3$ and the part I from (\ref{producto}) true for $n-1.$ First, suppose that 
$1\leq a\leq \left[n/2\right]$, and $a\leq b\leq n-a$. Then by Proposition 1 we have that

\begin{eqnarray*}
\G_{n}(a)\G_{n}(b)&=&\left[\left\{+\right\} \times \G_{n-1}(a-1)\cup \left\{-\right\} \times \G_{n-1}(a) \right]\cdot\\
&& \left[ \left\{+\right\} \times \G_{n-1}(b-1)\cup \left\{-\right\} \times \G_{n-1}(b) \right] \\
&=& \left\{+\right\} \times \G_{n-1}(a-1)\G_{n-1}(b-1)\cup \left\{+\right\}\times \G_{n-1}(a)\G_{n-1}(b)\\
&&\cup\left\{-\right\} \times \G_{n-1}(a-1)\G_{n-1}(b)\cup\left\{-\right\}\times \G_{n-1}(a)\G_{n-1}(b-1)\\
&=&\left\{+\right\} \times \bigcup\limits_{i=0}^{a-1}\G_{n-1}(n-a-b+2i+1)\\
&&\cup \left\{+\right\} \times \bigcup\limits_{i=0}^{a}\G_{n-1}(n-a-b+2i-1)\\
&&\cup\left\{-\right\} \times \bigcup\limits_{i=0}^{a-1}\G_{n-1}(n-a-b+2i)
\cup\left\{-\right\}\bigcup\limits_{i=0}^{a}\G_{n-1}(n-a-b+2i)
\end{eqnarray*}

We can simplify the above result showing the first union is contained in the
second union
\begin{eqnarray*}
\bigcup\limits_{i=0}^{a}\G_{n-1}(n-a-b+2i-1)
&=&\bigcup\limits_{i=-1}^{a-1}\G_{n-1}(n-a-b+2i+1) \\
&=&\G_{n-1}(n-a-b-1)\cup\\ 
&&\bigcup\limits_{i=0}^{a-1}\G_{n-1}(n-a-b+2i+1).
\end{eqnarray*}

But in the extreme case $a+b=n$

\begin{eqnarray*}
\bigcup\limits_{i=0}^{a}\G_{n-1}(2i-1)
&=&\bigcup\limits_{i=-1}^{a-1}\G_{n-1}(2i+1) \\
&=&\G_{n-1}(-1)\cup \bigcup\limits_{i=0}^{a-1}\G_{n-1}(2i+1)\\
&=&\bigcup\limits_{i=0}^{a-1}\G_{n-1}(2i+1).
\end{eqnarray*}

And as

\begin{equation*}
\bigcup\limits_{i=0}^{a-1}\G_{n-1}(n-a-b+2i)\subset
\bigcup\limits_{i=0}^{a}\G_{n-1}(n-a-b+2i),
\end{equation*}

then it follows that

\begin{eqnarray*}
\G_{n}(a)\G_{n}(b)&=&\left\{+\right\} \times \bigcup\limits_{i=0}^{a}\G_{n-1}(n-a-b+2i-1)
\cup \left\{-\right\} \times \bigcup\limits_{i=0}^{a}\G_{n-1}(n-a-b+2i)\\
&=&\bigcup\limits_{i=0}^{a}\left[ \left\{+\right\} \times \G_{n-1}(n-a-b+2i-1)\right]
\cup \left[ \left\{-\right\} \times \G_{n-1}(n-a-b+2i)\right] \\
&=&\bigcup\limits_{i=0}^{a}\G_{n}(n-a-b+2i).
\end{eqnarray*}

We will prove the part II from (\ref{producto}) of a similar way. Suppose that $\left[ n/2\right] +1\leq
a\leq n,$ and $n-a\leq b\leq a$. We have
\begin{eqnarray*}
\G_{n}(a)\G_{n}(b)&=&\left[ \left\{+\right\} \times \G_{n-1}(a-1)\cup \left\{-\right\} \times \G_{n-1}(a) \right]\cdot\\
&& \left[\left\{+\right\} \times \G_{n-1}(b-1)\cup \left\{-\right\} \times \G_{n-1}(b) \right] \\
&=& \left\{+\right\} \times \G_{n-1}(a-1)\G_{n-1}(b-1)\cup \left\{+\right\}\times \G_{n-1}(a)\G_{n-1}(b)\\ 
&&\cup \left\{-\right\} \times \G_{n-1}(a-1)\G_{n-1}(b)\cup \left\{-\right\} \times \G_{n-1}(a)\G_{n-1}(b-1)\\	
&=&\left\{+\right\} \times \bigcup\limits_{i=0}^{n-a}\G_{n-1}(a+b-n+2i-1)\\
&&\cup \left\{+\right\} \times \bigcup\limits_{i=0}^{n-a-1}\G_{n-1}(a+b-n+2i+1)\\
&&\cup \left\{-\right\} \times \bigcup\limits_{i=0}^{n-a}\G_{n-1}(a+b-n+2i)
\cup \left\{-\right\} \times \bigcup\limits_{i=0}^{n-a-1}\G_{n-1}(a+b-n+2i).
\end{eqnarray*}
As
\begin{eqnarray*}
\bigcup\limits_{i=0}^{n-a}\G_{n-1}(a+b-n+2i-1)
&=&\bigcup\limits_{i=-1}^{n-a-1}\G_{n-1}(a+b-n+2i+1) \\
&=&\G_{n-1}(a+b-n-1)\cup \\ 
&&\bigcup\limits_{i=0}^{n-a-1}\G_{n-1}(a+b-n+2i+1).
\end{eqnarray*}
And in the extreme case $a+b=n$
\begin{eqnarray*}
\bigcup\limits_{i=0}^{n-a}\G_{n-1}(2i-1)&=&\bigcup\limits_{i=-1}^{n-a-1}\G_{n-1}(2i+1)\\
&=&\G_{n-1}(-1)\cup \bigcup\limits_{i=0}^{n-a-1}\G_{n-1}(2i+1)\\
&=&\bigcup\limits_{i=0}^{n-a-1}\G_{n-1}(2i+1).
\end{eqnarray*}
Also 
\begin{equation*}
\bigcup\limits_{i=0}^{n-a-1}\G_{n-1}(a+b-n+2i)\subset
\bigcup\limits_{i=0}^{n-a}\G_{n-1}(a+b-n+2i).
\end{equation*}
Therefore
\begin{eqnarray*}
\G_{n}(a)\G_{n}(b)&=&\left\{+\right\} \times \bigcup\limits_{i=0}^{n-a}\G_{n-1}(a+b-n+2i-1)\\
&\cup& \left\{-\right\} \times \bigcup\limits_{i=0}^{n-a}\G_{n-1}(a+b-n+2i) \\
&=&\bigcup\limits_{i=0}^{n-a}\left\{+\right\} \times \G_{n-1}(a+b-n+2i-1)\\
&\cup& \bigcup\limits_{i=0}^{n-a}\left\{-\right\} \times \G_{n-1}(a+b-n+2i)\\
&=&\bigcup\limits_{i=0}^{n-a}\G_{n}(a+b-n+2i).
\end{eqnarray*}

\section{Conclution}
In this paper Schur rings induced by permutation automorphic subgroup of $Aut(\Z_{2}^{n})$ were
considered, namely, $\mathfrak{S}(\Z_{2}^{n},S_{n})$, $\mathfrak{S}(\Z_{2}^{n},C_{n})$,
$\mathfrak{S}(\Z_{2}^{n},\Delta_{n})$,$\mathfrak{S}(\Z_{2}^{n},\Delta_{n}C_{n})$,
$\mathfrak{S}(\Z_{2}^{n},H_{n})$, $\mathfrak{S}(\Z_{2}^{n},H_{n}C_{n})$ and 
$\mathfrak{S}(\Z_{2}^{n},H_{n}\Delta_{n}C_{n})$. Also some $S$-sets were defined: 
$\G_{n}(a)$-complete $S$-sets, free and non-free circulant $S$-sets, circulant $S$-sets invariant
by decimation, symmetric, non-symmetric and antisymmetric circulant $S$-sets and the $\delta_{r}$-
invariant $S$-sets. All this issues were relationed with Hadamard matrices. Important results on
Hadamard matrices were obtained:
\begin{enumerate}
\item If a Hadamard matrix exist, then this or some equivalente Hadamard matrix is contained
in a $\G_{4n}(2n)$-complete $S$-set. 
\item Circulant and one core Hadamard matrices of order $4nr$ can't exist if those have 
some particular structure.
\end{enumerate}

\section{Acknowledgment}
We thank an anonymous referees for every careful reading of manuscript and for several suggestions
that improved the presentation.

\Addresses

\end{document}